\newcommand{\mathbbm}[1]{\text{\usefont{U}{bbm}{m}{n}#1}}
\newtheorem{theorem}{Theorem}
\newtheorem{definition}{Definition}
\newtheorem{lemma}{Lemma}
\theoremstyle{remark}
\newtheorem{remark}{Remark}
\title{Optimal Private and Communication Constraint Distributed Goodness-of-Fit Testing for Discrete Distributions in the Large Sample Regime}
\author{%
  Lasse Vuursteen\\
  Department of Statistics and Data Science\\
  The Wharton School of the University of Pennsylvania\\
  Philadelphia, PA 19104 \\
  \texttt{lassev@wharton.upenn.edu} \\
}
\begin{document}

\maketitle

\begin{abstract}
We study distributed goodness-of-fit testing for discrete distribution under bandwidth and differential privacy constraints. Information constraint distributed goodness-of-fit testing is a problem that has received considerable attention recently. The important case of discrete distributions is theoretically well understood in the classical case where all data is available in one ``central'' location. In a federated setting, however, data is distributed across multiple ``locations'' (e.g.\ servers) and cannot readily be shared due to e.g.\ bandwidth or privacy constraints that each server needs to satisfy. We show how recently derived results for goodness-of-fit testing for the mean of a multivariate Gaussian model extend to the discrete distributions, by leveraging Le Cam's theory of statistical equivalence. In doing so, we derive matching minimax upper- and lower-bounds for the goodness-of-fit testing for discrete distributions under bandwidth or privacy constraints in the regime where the number of samples held locally is large.\\
	\textbf{Keywords}: distributed inference, goodness-of-fit testing, differential privacy, communication constraint, federated learning, statistical equivalence.
\end{abstract}

\section{Introduction}

Federated learning is a fundamental problem in statistics and machine learning, where data is distributed across multiple locations (e.g.\ servers) and cannot readily be shared due to e.g.\ bandwidth or privacy constraints that each server needs to satisfy. The primary goal in these distributed data settings is to perform a single global inference task, such as hypothesis testing, regression, or classification, by aggregating the local information from each server.

Starting a few decades ago, investigations into distributed settings with bandwidth and other information constraints originated in the electrical engineering community, under the names ``decentralized decision theory / the CEO problem'' e.g.~\cite{tenney_detection_1981, ahlswede_hypothesis_1986, tsitsiklis_decentralized_1988, 394767, kreidl_decentralized_2011, tarighati_decentralized_2017} or ``inference under multiterminal compression'' (see~\cite{te_sun_han_statistical_1998} for an overview). These were largely motivated by applications where data is by construction observed and processed locally, such as  astronomy, meteorology, seismology, surveillance systems, wireless communication, military radar or air traffic control systems. 

Modern federated learning often involves data distributed across siloed data centers (e.g., hospitals) or networks of cellphone users, applied in areas such as word prediction, facial and voice recognition, virtual assistants like Siri or Google Assistant, autonomous vehicles, and earthquake prediction~\cite{li2020federated,konevcny2016federated,hard2018federated,beaufays2019federated,nguyen2022deep,cochran2009quake}. In these settings, bandwidth often becomes a limited or costly resource~\cite{konevcny2016federated2}.

Similarly, with advances in electronic record keeping, privacy has become a more and more pressing issue. These issues are prominent in tech industry products~\cite{crain2018limits}, including many federated learning applications mentioned earlier, as well as in scientific fields like medical sciences~\cite{kulynych2017clinical} and social sciences~\cite{oberski2020differential}.

Methods that preserve privacy have been around in the statistics community for some time, starting in the 1980's~\cite{duncan1989risk, duncan1991enhancing}. The current leading formal privacy framework is that of \emph{differential privacy} (DP), as introduced in~\cite{dwork2006differential}. DP is a mathematical guarantee, describing whether results or data sets can be considered ``privacy preserving'' and hence can be openly published. Whilst many other privacy frameworks exist, this notion of privacy holds a prominent position both theoretically and practically, finding application within industry giants like Google~\cite{google_privacy}, Microsoft~\cite{ding2017collecting}, Apple~\cite{apple2017}, as well as governmental entities such as the US Census Bureau~\cite{rodriguezmodernization}.

Quantifying the trade-off between privacy and statistical power means that researchers and data analysts can make an appropriate balance between data privacy and meaningful analysis. Similarly, by quantifying the impact of bandwidth constraints, systems can be designed to work as efficiently as possible within such bandwidth constraints.

The performance of distributed inference under bandwidth or differential privacy is well-studied for various estimation problems. For instance, distributed estimation under differential privacy has been studied for the many-normal-means model, discrete distributions and parametric models in~\cite{duchi2013local,duchi2018minimax,pmlr-v119-acharya20a_context_aware_LDP,min_barg,NEURIPS2021_multisample_discrete_dist_est}, and density estimation~\cite{sart_density_LDP,kroll_density_at_a_point_LDP,butucea_LDP_adaptation}, and nonparametric regression in~\cite{Cai2023FL-NP-Regression}. Bandwidth constraints have been studied for the many-normal-means and parametric models in e.g.~\cite{zhang2013information,duchi_optimality_2014,shamir2014fundamental,braverman2016communication,xuraginsky2016,han2018geometric,
cai_distributed_2020,cai2022distributed}, as well as nonparametric models, including Gaussian white noise~\cite{pmlr-v80-zhu18a}, nonparametric regression~\cite{szabo2020adaptive}, density estimation~\cite{barnes2020lower,acharya2021optimal}, general, abstract settings~\cite{zaman2022distributed} and online learning~\cite{pmlr-v167-hoeven22a}. Distributed adaptive estimation methods under bandwidth constraints, where adaptation occurs to the unknown regularity of the functional parameter of interest, were derived in~\cite{szabo2020adaptive,szabo2022distributed,cai2022distributed}. Testing simple hypotheses under bandwidth constraints has been studied by e.g.~\cite{tsitsiklis_decentralized_1988} and under differential privacy constraints by~\cite{canonne2019structure}.

In this paper, we consider goodness-of-fit testing for discrete distributions (i.e.\ the multinomial model) in scenarios where the number of samples received by each server is large. Specifically, we study testing a simple null versus a composite alternative, in the setting where $m$ servers receive $n$ observations each from a distribution on a sample space of cardinality $d$, where $n$ is large comparatively to $m$ and $d$. Recently, such multinomial distributed data have found many applications in areas that handle very large samples over (possibly also large) discrete domains. For example, in population genetics~\cite{pritchard2000inference,microarray_multinomial} and computer science; where it is used for e.g.\ information retrieval~\cite{zhai2004study,schutze2008introduction}, speech and text and classification~\cite{James_Jurafsky}, text mining~\cite{cai2023testing} and large language models~\cite{radford2019language}. This has sparked recent interest in studying the statistical decision theoretic properties of the multinomial model, see~\cite{wasserman_balakrishnan} for an overview.

Deriving minimax rates for goodness-of-fit testing of discrete distributions under bandwidth and differential privacy constraints is particularly challenging when each server holds multiple observations. To date, matching rates have been established only when each machine observes a single observation~\cite{acharya_IEEE_identity_testing_part_I, acharya_IEEE_identity_testing_part_II, acharya_IEEE_III} (see also our discussion of related work below). The techniques used to derive lower-bounds in the aforementioned paper heavily rely on the fact that each server contains only one observation, see~\cite{acharya_IEEE_identity_testing_part_I}. Moreover, whilst tight lower-bounds for the multiple observations case exist for the Gaussian model, the functional analytic techniques used to derive these results heavily rely on Gaussianity, see~\cite{szabo2023optimal} and~\cite{Cai2024FL-NP-Testing} for the respective bandwidth constraint and DP lower-bounds. Additionally, lower-bound techniques developed for estimation problems generally do not yield tight impossibility results for goodness-of-fit testing problems (see also the discussion in Section~\ref{sec:difficulties_in_direct_analysis} of the appendix).

We derive matching upper- and lower-bounds for goodness-of-fit testing for discrete distributions under bandwidth and differential privacy constraints in scenarios where the number of samples $n$ held by each of the servers is large in comparison to $d$ and $m$; $m d \log d / \sqrt{n} = o(1)$. This is achieved by leveraging the theory of statistical equivalence, as introduced by Le Cam (see e.g.~\cite{le_cam_2012asymptotic, strasser_mathematical_1985} for an introduction). Leveraging existing results concerning statistical equivalence of multinomial data with a multivariate Gaussian model proven in~\cite{carter2002multinomial} allow us to show, roughly speaking, that the distributed goodness-of-fit testing problem for discrete distributions is statistically equivalent a distributed goodness-of-fit testing problem for the mean of a multivariate Gaussian model, and hence the minimax rate for the former problem is the same as the minimax rate for the latter problem, which was established in~\cite{szabo2023optimal} and~\cite{Cai2024FL-NP-Testing}, for bandwidth and differential privacy constraints respectively. Furthermore, we exploit the bandwidth constraint distributed setting in which these two models have different minimax rates to show that, when $n$ is small compared to $d$ and $m$, the multinomial model and multivariate Gaussian model are statistically non-equivalent.

The rest of the paper is organized as follows. After a brief section on related work and notation, the article continues with a precise problem formulation in Section~\ref{sec:problem_formulation}. Section~\ref{sssec:distributed_framework} outlines the distributed framework, for both bandwidth and differential privacy constraints. In Section~\ref{sssec:goodness_of_fit_testing}, we introduce the problem of distributed goodness-of-fit testing for discrete distributions under bandwidth and differential privacy constraints. In Section~\ref{sec:main_results}, the main results concerning the minimax rates are presented. Section~\ref{sec:statistical_equivalence_proof_technique} briefly outlines the main idea of the proof technique. Section~\ref{sec:nonequivalence_result} gives further insight into the comparison between discrete distributions and its comparable multivariate Gaussian model. The article ends with a brief discussion of the derived results. In the appendix, the tools of asymptotic equivalence within the distributed framework are presented and the technical proofs are provided.

\subsection{Related work}\label{ssec:related_work}
Minimax goodness-of-fit testing knows a rich literature within the statistics and machine learning communities, see~\cite{ermakov1990asymptotically,ingster1993asymptotically,lepskii1992asymptotically,spokoiny_adaptive_1996,ingster2003nonparametric}. The $d$-ary discrete distribution uniformity testing problem bares a close relationship with ``classical'' nonparametric goodness-of-fit testing in the sense of~\cite{an1933sulla,smirnov,cramer1928composition,von1928statistik} and other nonparametric testing problems, see Section 1.4 in~\cite{ingster_nonparametric_2003} and references therein.

For distributed goodness-of-fit testing specifically, much less is known. For multivariate Gaussian models under communication or differential privacy constraints, solutions have been established for the case where each server holds multiple observations. Communication constraints have been studied in~\cite{acharya2020distributed,szabo2022optimal_IEEE,szabo2023optimal} and differential privacy constraints in~\cite{Cai2024FL-NP-Testing}, with the authors deriving matching minimax upper- and lower-bounds for the goodness-of-fit testing for the mean of a multivariate Gaussian model.

For testing in discrete distributions, only the scenario where each server receives just one observation has been fully characterized in terms of the minimax rate in~\cite{acharya_IEEE_identity_testing_part_I,acharya_IEEE_identity_testing_part_II,acharya_IEEE_III}. See also~\cite{canonne2022topics} for an overview. In these aforementioned works, the authors derive minimax rates goodness-of-fit testing for discrete distributions under bandwidth and differential privacy constraints. See~\cite{pmlr-v48-rogers16,sheffet2018locally,acharya2018differentially,pmlr-v89-acharya19b,berrett2020locally} for investigations specifically under local DP (i.e.\ one observation per server with DP constraint). Nonparametric goodness-of-fit density testing for under local DP is considered in~\cite{dubois:hal-04426780,lam2022minimax}, where in~\cite{lam2022minimax}, the authors consider adaptation as well. For some investigations into the multiple observations per server case, see~\cite{pmlr-v99-diakonikolas19a, fischer:etal:2018}. 

For estimation, the bandwidth constraint estimation discrete distributions in the large sample-per-server case has been studied by~\cite{NEURIPS2021_multisample_discrete_dist_est}, who derive matching upper and lower-bounds up to logarithmic factors. However, their technique does not extend to the goodness-of-fit testing problem.

\subsection{Notation and notions}\label{ssec:notation}

Throughout this paper, we shall use the following notation. For two positive sequences $a_k$ and $b_k$, we use $a_k \lesssim b_k$ to mean that $a_k \leq Cb_k$ for some universal positive constant $C$. We write $a_k \asymp b_k$ if both $a_k \lesssim b_k$ and $b_k \lesssim a_k$, and $a_k \ll b_k$ if $a_k/b_k = o(1)$.

We denote the maximum of $a$ and $b$ by $a \vee b$ and the minimum by $a \wedge b$. For $k \in \mathbb{N}$, $[k]$ represents the set $\{1, \dots, k\}$. Universal constants $c$ and $C$ may vary between lines. The Euclidean norm of a vector $v \in \mathbb{R}^d$ is denoted by $\|v\|_2$. For a matrix $M \in \mathbb{R}^{d \times d}$, $\| M \|$ represents the spectral norm, and $\text{Tr}(M)$ denotes its trace. $I_d$ is the $d \times d$ identity matrix.

A non-negative sequence $M_k$ is said to be of poly-logarithmic order in non-negative sequences $a_k,b_k,c_k$ if there exists a constant $c>0$ such that $M_k \lesssim {(\log(a_k) \log(b_k) \log(c_k))}^c$.

Given measurable spaces $(\cX,\mathscr{X})$ and $(\cY,\mathscr{Y})$, a \emph{Markov kernel $K$ (between $(\cX,\mathscr{X})$ and target $(\cY,\mathscr{Y})$)} is a map $K\equiv K(\cdot|\cdot): \mathscr{Y} \times \cX \to [0,1]$ with the following two properties: The map $x\mapsto K(A|x)$ is measurable for all $A \in \mathscr{Y}$, and the map $A \mapsto K(A|x)$ is a probability measure on $\mathscr{Y}$ for every $x \in \cX$. 

If $S$ is a random variable on a probability space $(\cX,\mathscr{X}, \P)$, we let $\P^S$ denote its \emph{push-forward measure}, i.e.\ the measure defined by $\P^S(B) := \P(S^{-1}(B))$. We shall use $\E$ and $\E^S$ as the expectation operator corresponding to $\P$ and $\P^S$. Random variables $X,Y,Z$ form a \emph{Markov chain} $X \to Y \to Z$ whenever their joint distribution $\P^{(X,Y,Z)}$ disintegrates as $d\P^{(X,Y,Z)} = d\P^{X} d\P^{Y|X} d\P^{Z|Y}$.

\section{Problem formulation}\label{sec:problem_formulation}


We begin by formally introducing the general framework of distributed inference. 

\subsection{The distributed framework}\label{sssec:distributed_framework}

Consider a measurable space $(\cX,\mathscr{X})$ with a statistical model $\cP = \{ P_f \, : \, f \in \cF \}$ defined on it. In the distributed framework, we consider $j=1,\dots,m$ servers, each receiving data $X^{(j)}$ drawn from a given distribution $P_f \in \cP$. Each of the servers communicates a transcript $Y^{(j)}$ based on the data to a central server, which in turn computes its solution to the testing problem $T(Y) \in \{0,1\}$ based on the aggregated transcripts $Y=(Y^{(1)},\dots,Y^{(m)})$. We shall use the convention that $T(Y) = 1$ means rejecting the null hypothesis.  The transcript generating mechanisms are then given by Markov kernels ${\{K^j\}}_{j=1,\dots,m}$, with the Markov kernel (i.e.\ conditional distribution) of the transcript $Y^{(j)}$ given the data $X^{(j)}$ and the randomness $U$ shared by the servers denoted by $K^j(\cdot|X^{(j)},U)$. We formalize this in the following definition.

\begin{definition}\label{def:distributed_testing_protocol}
A \emph{distributed testing protocol for the model $\cP$} consists of a triplet \\ $\{ T , {\{K^j\}}_{j=1}^m, (\cU,\mathscr{U},\P^U)\}$, where ${\{K^j\}}_{j=1}^m$ is a collection of Markov kernels $K^j : \mathscr{Y}^{(j)} \times \cX \times \mathcal{U} \to [0,1]$ defined on a measurable space $(\cY^{(j)},\mathscr{Y}^{(j)})$, $T: \bigotimes_{j=1}^m \cY^{(j)} \to \{0,1\}$ is a measurable map and $(\cU,\mathscr{U},\P^U)$ is probability space.
\end{definition}

The probability space $(\cU,\mathscr{U},\P^U)$ is used to (possibly) generate a source of randomness (independent of the data) that is shared by the servers. The distributed protocol is said to have \emph{no access to shared randomness} or to be a \emph{local randomness protocol} if $\P^U$ is trivial\footnote{$\mathscr{U}$ is the trivial sigma-algebra (meaning $U \sim \P^U$ is a degenerate random variable)}. In an abuse of notation, we shall often refer to the entire triplet $\{ T , {\{K^j\}}_{j=1,\dots,m}, (\cU,\mathscr{U},\P^U)\}$ using just $T$.

Given a distributed protocol and i.i.d.\ data from $P_f$ we shall use $\P_f$ to denote the joint distribution of $Y = (Y^{(1)},\dots,Y^{(m)})$, the data $X$ under $P_f^m$ and the shared randomness $U \sim \P^U$. Writing $x = (x^{(1)},\dots,x^{(m)}) \in \cX^m$, let $x\mapsto K(A|x,u)$ denote the Markov kernel $\bigotimes_{j=1}^m K^j(\cdot|x^{(j)},u)$ (i.e.\ the product measure). The independence structure of the data yields that $P^m_f K = \bigotimes_{j=1}^m P_f K^j$ and the push-forward measure of $Y$ can be seen to disintegrate as
\begin{equation*}
\P^Y_f(A) = P_f^{m} \P^U K(A) = \P^U P_f^{m} K (A) = \int \int K(A|x,u) dP_f^{m}(x) d\P^U(u),
\end{equation*}
where the second equality follows from the independence of $U$ with the data drawn from $P_f$. The above disintegration of the push-forward measure of $Y$ and the product structure of $K$ can be interpreted as $(X,Y,T(Y))$ forming a Markov chain given $U$, in the sense of the diagram
\begin{equation} \label{def:dist:testing:MC}
 \begin{matrix}
&\quad X^{(1)} &\put(0,5){\vector(1,0){20}}  &\qquad Y^{(1)}|U\quad&\put(-10,4){\vector(3,-1){20}} \\
&\quad\vdots &\put(0,4){\vector(1,0){20}}  &\qquad\vdots  \quad&\put(-10,4){\vector(1,0){20}} \\
&\quad X^{(m)} & \put(0,5){\vector(1,0){20}}  &\qquad Y^{(m)}|U\quad&\put(-10,5){\vector(3,1){20}}
\end{matrix}  \qquad
 T(Y).
\end{equation}
The diagram indicates the flow of dependencies. The $m$ servers each obtain data $X^{(j)}$ from $P_f$, and generate a transcript $Y^{(j)}$ based on the data and shared randomness $U$. The central server then makes a decision $T(Y)$ based on the aggregated transcripts $Y$. For a definition of Markov kernels and Markov chains, see Section~\ref{ssec:notation}. 

Allowing transcript-generating mechanisms to access both shared and local randomness is important for our analysis, as shared randomness has been found to yield strictly better performance in distributed goodness-of-fit testing, see e.g.~\cite{acharya_IEEE_identity_testing_part_I,acharya_IEEE_identity_testing_part_II,acharya_IEEE_III,acharya2020distributed,szabo2022optimal_IEEE,szabo2023optimal,Cai2024FL-NP-Testing}. Shared randomness protocols can be seen as a subset of common interactive procedures, such as sequential and blackboard protocols (see e.g.~\cite{interactive_acharya}). The aforementioned paper shows that for discrete distribution goodness-of-fit testing in the single observation per server case, sequential and blackboard protocols offer no benefit over shared randomness protocols. Similarly, for mean shift problems in the multivariate Gaussian case, no advantage of sequential protocols over shared randomness protocols is known, except in the case of estimation with unknown variance~\cite{cai:distributed:adap:sigma}. Since we study goodness-of-fit testing for discrete distributions in the large-number-of-observations case by comparing with a Gaussian model with known variance, we restrict the setting of the main article to local and shared randomness protocols only. Nevertheless, our theoretical framework is general enough to handle interactive protocols, which we discuss in Section~\ref{ssec:equiv_of_models_in_dist_setting} of the appendix.

Next, we introduce the notion of a bandwidth constraint in the distributed setting.
\begin{definition}\label{def:b-bit_bandwidth_protocol}
A distributed protocol is said to satisfy a \emph{$b$-bit bandwidth constraint} if its kernels ${\{K^j\}}_{j=1,\dots,m}$ are defined on measurable spaces $(\cY^{(j)},\mathscr{Y}^{(j)})$ satisfying $|\cY^{(j)}| \leq 2^b$ for $j=1,\dots,m$. 
\end{definition}

We use $\mathscr{T}_{\text{LR}}^{(b)}$ and $\mathscr{T}_{\text{SR}}^{(b)}$ to denote the classes of all local randomness and shared randomness distributed testing protocols with communication budget $b$ per machine, respectively. 

Lastly, we introduce the notion of differential privacy in the distributed setting. We will be focusing on the notion of differential privacy as put forward by~\cite{dwork2006calibrating,dwork2010differential}. Differential privacy provides a mathematical framework that guarantees preservation of privacy in a notion akin to cryptographical guarantees. Formally, a differential privacy constraint on a transcript in our setting is formulated as follows.

\begin{definition}\label{def:distributed_DP_protocol}
    Let $\epsilon \geq 0, \delta \geq 0$. The transcript $Y^{(j)}$ generated from $K^j, u \in \cU$ is said to be \emph{$(\epsilon,\delta)$-differentially private} if 
    \begin{align}\label{eq:privacy_constraint}
        K^j(A&|x,u) \leq e^{\epsilon}  K^j(A|x',u) + \delta \\
        &\text{ for all } A \in \mathscr{Y}^{(j)}, \; x,x' \in \cX, \; i \in \{1,\dots,n\}. \nonumber
       \end{align}
A distributed testing protocol $\{ T , {\{K^j\}}_{j=1}^m, (\cU,\mathscr{U},\P^U)\}$, is said be a \emph{distributed $(\epsilon,\delta)$-differentially private testing protocol} if ${\{K^j\}}_{j=1,\dots,m}$ satisfies~\eqref{eq:privacy_constraint} $\P^U$-a.s.
\end{definition}

Small values of $\epsilon$ and $\delta$ ensure that, even when the transcript $Y^{(j)}$ is publicly available, the sample $X^{(j)}$ underlying $Y^{(j)}$ is unidentifiable. We stress that this type of differential privacy guarantee concerns the local data $X^{(j)}$ in full, even $X^{(j)}$ consists of multiple observations. This is often referred \emph{local differential privacy}, where the privacy guarantee regards each server as essentially pertaining data to ``one indiviual''. For a thorough introduction on differential privacy guarantees, we refer the reader to~\cite{dwork2006differential}. We also note that the use of shared randomness does not affect the privacy guarantee provided by the protocol, as the guarantee holds even if the outcome of the shared randomness is known.

We use $\mathscr{T}_{\text{LR}}^{(\epsilon,\delta)}$ and $\mathscr{T}_{\text{SR}}^{(\epsilon,\delta)}$ to denote the classes of all local- and shared randomness $(\epsilon,\delta)$-differentially private distributed testing protocols, respectively. We note that the machinery developed in Section~\ref{ssec:equiv_of_models_in_dist_setting} allows consideration of both types of constraints simultaneously. In the main text of the article, we shall focus on the bandwidth constraint and differential privacy constraint separately as minimax rates for the joint constraints are not known for the Gaussian model we use for comparison to the multinomial model in the main article.

\subsection{Distributed goodness-of-fit testing}\label{sssec:goodness_of_fit_testing}

We start by giving a formal description of sampling from a discrete distribution in the distributed setting. Consider a set with cardinality $d$; for simplicity, we take $\tilde{\cX} = \{1,\dots,d\}$. Any probability distribution such a set can be characterized by an element of the $d-1$-dimensional probability simplex $\mathbb{S}^d$, defined as
\begin{equation*}
\left\{ q = (q_1,\dots,q_d) \in {[0,1]}^d \, : \, \sum^d_{i=1} q_i = 1  \right\}.
\end{equation*}
In our distributed framework, each server $j=1,\dots,m$ observes a data $\tilde{X}^{(j)}$ taking values in ${\{1,\dots,d\}}^n$
\begin{equation}\label{eq:multinomial_generating_equation_1}
    \tilde{X}^{(j)} = (\tilde{X}^{(j)}_1,\ldots,\tilde{X}^{(j)}_n) \sim Q \equiv Q_{n,q}, \quad \tilde{X}_i^{(j)} \overset{i.i.d.}{\sim} \text{Multinomial}(1,q) \; \text{ for } q \in \mathbb{S}^d.
\end{equation}
That is, each server obtains $n$ i.i.d.\ draws from a multinomial distribution with parameter $q$. 

The statistical decision problem of interest shall be that of \emph{goodness-of-fit} or \emph{uniformity testing}, i.e.\ distinguishing the hypotheses  
\begin{equation}\label{eq:multinomial_uniformity_test_hypothesis}
H_0: q = q_0 \; \text{ versus } H_1: q \in \left\{ q \in \cF : \left\|q - q_0 \right\|_1 \geq \rho \right\} =: H_{\rho},
\end{equation}
where $q_0=(q_{01},\dots,q_{0d}) = (1/d,\ldots,1/d) \in \mathbb{S}^d$ and
\begin{equation}\label{eq:index_set_multinomial}
\cF = \left\{ q \in \mathbb{S}^d  \, :  \, \frac{\max_i q_i}{\min_i q_i} \leq R \right\},
\end{equation}
for some fixed constant $R > 0$. The statistical model under consideration shall be denoted by $\cQ = \{ Q_q^n \, : \, q \in \cF \}$.

We define the testing risk for a distributed testing protocol $T$, for the hypotheses~\eqref{eq:multinomial_uniformity_test_hypothesis} (and statistical model $\cQ$) by sum of the type I and worst case type II error over the alternative class;
\begin{equation*}
    \cR_\cQ(T, H_\rho) := \Q^Y_{q_0} T(Y) + \underset{f \in H_\rho}{\sup}  \Q^Y_{f} \left(1 - T(Y) \right).
\end{equation*}
The minimax testing risk over a class of distributed protocols $\mathscr{T}$ is then defined as $\inf_{T \in \mathscr{T}} \cR_\cQ(T, H_\rho)$. 

It is clear that, as $\rho$ tends to $0$, the minimax testing risk should increase. We are interested in finding the so called \emph{minimax separation rate}, or detection boundary, which is a sequence $\rho^*$ depending on the model characteristics $n,d$, $m$ and $\mathscr{T}$ such that the minimax testing risk converges to $0$ if $\rho \ll \rho^*$ or $1$ if $\rho \gg \rho^*$.

The minimax separation rate captures how the testing problem becomes easier, or more difficult, for different model characteristics. The minimax rate for the hypothesis above case is $\rho^2 \asymp \frac{\sqrt{d}}{mn}$ when $\mathscr{T}$ consists of the class of all testing protocols, as was established in~\cite{paninski_coincidence-based_2008} and~\cite{valiant_valiant_identity_testing}.

When $\mathscr{T}$ is taken to be one of the bandwidth or privacy constraint classes of tests, i.e.\ $\mathscr{T}_{\text{LR}}^{(b)}$ and $\mathscr{T}_{\text{SR}}^{(b)}$ $\mathscr{T}_{\text{LR}}^{(\epsilon,\delta)}$ and $\mathscr{T}_{\text{SR}}^{(\epsilon,\delta)}$, it is sensible to expect $\rho^*$ to depend on the bandwidth or differential privacy parameters, $b$ and $(\epsilon,\delta)$, respectively. In the distributed discrete distribution setupj described above with $n=1$, such minimax rates have been derived in~\cite{acharya_IEEE_identity_testing_part_I,acharya_IEEE_identity_testing_part_II}. We discuss these results in the next section, contrasting them with the minimax separation rate derived in this paper for the case where $m d \log d / \sqrt{n} = o(1)$.

\section{Minimax rates in the large sample regime}\label{sec:main_results}

We now turn to the main results of this paper, which concern the minimax rates for goodness-of-fit testing for discrete distributions under bandwidth and differential privacy constraints in the large sample regime. We shall show that the minimax rates for the distributed multinomial model under bandwidth and differential privacy constraints are the same as the minimax rates for a $d$-dimensional distributed Gaussian model, as derived in~\cite{szabo2023optimal} and~\cite{Cai2024FL-NP-Testing}, respectively.

The first theorem establishes the minimax rate for the distributed multinomial model under bandwidth constraints. A proof can be found in Section~\ref{sec:proofs_multinomial} of the appendix.

\begin{theorem}\label{thm:multinomial_equivalence_bandwidth}
    Consider sequences $m \equiv m_\nu$, $b \equiv b_\nu$, $d \equiv d_\nu$ and $n \equiv n_\nu$ such that $md \to \infty$ whilst
    \begin{equation*}
        m d \log d / \sqrt{n} \overset{\nu\to \infty}{\to} 0.
    \end{equation*}
    Suppose that $\rho \equiv \rho_\nu$ is a nonnegative sequence satisfying 
    \begin{equation}\label{eq:minimax_rate_multinomial_public_coin_bandwidth}
        \rho^2 \asymp \left( \frac{d}{\sqrt{d \wedge b} mn}\right) \bigwedge \left( \frac{\sqrt{d}}{\sqrt{m}n} \right).
    \end{equation}
    Then, 
    \begin{equation*}
        \inf_{T \in \mathscr{T}_{\text{SR}}^{(b)}} \cR_\cQ(T, H_{M_\nu \rho}) \to \begin{cases}
        0 \; \text{ for any } M_\nu \to \infty, \\
        1 \; \text{ for any } M_\nu \to 0.
        \end{cases} 
    \end{equation*}
    When considering the class of only local randomness protocols (i.e.\ replacing $\mathscr{T}_{\text{SR}}^{(b)}$ with $\mathscr{T}_{\text{LR}}^{(b)}$ in the above display), the minimax separation rate is given by
    \begin{equation}\label{eq:minimax_rate_multinomial_private_coin_bandwidth}
    \rho^2 \asymp \left( \frac{d^{3/2}}{({d \wedge b}) mn}\right) \bigwedge \left( \frac{\sqrt{d}}{\sqrt{m}n} \right).
    \end{equation}
\end{theorem}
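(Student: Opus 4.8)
The plan is to transfer the known minimax rate for the distributed Gaussian mean testing problem (from~\cite{szabo2023optimal}) to the multinomial model via Le Cam's theory of statistical equivalence, exploiting the equivalence of the multinomial experiment with a Gaussian shift experiment proven in~\cite{carter2002multinomial}. The first step is to set up the comparison carefully: the $n$ i.i.d.\ multinomial draws on $\{1,\dots,d\}$ held by server $j$ are summarized by the sufficient statistic (the count vector), which is $\text{Multinomial}(n,q)$. By the results of~\cite{carter2002multinomial}, under the regime $d\log d/\sqrt{n}=o(1)$ this experiment is (after a suitable variance-stabilizing transformation, roughly $q\mapsto 2\sqrt{q}$) asymptotically equivalent to observing a Gaussian vector with mean proportional to $\sqrt{q}$ and identity covariance scaled by $1/n$. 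I would record the precise Le Cam distance bound, which scales like $d\log d/\sqrt{n}$ per server, so that the total deficiency across $m$ servers is $O(md\log d/\sqrt{n})=o(1)$ by hypothesis. The key point is that the equivalence is realized by randomizations (Markov kernels) acting on each server's local data \emph{before} the communication kernel $K^j$ is applied; since composing a bandwidth-$b$ kernel with a pre-processing randomization still yields a bandwidth-$b$ kernel, the class $\mathscr{T}^{(b)}_{\text{LR}}$ (and $\mathscr{T}^{(b)}_{\text{SR}}$) is preserved under the equivalence. This is precisely the content I would invoke from Section~\ref{ssec:equiv_of_models_in_dist_setting} of the appendix: asymptotic equivalence of local models lifts to asymptotic equivalence of the distributed testing problems under the same information constraint, with an error controlled by the sum of local Le Cam distances.

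Next I would translate the hypotheses. The uniformity null $q=q_0=(1/d,\dots,1/d)$ maps under $q\mapsto 2\sqrt q$ to the fixed point $\theta_0=(2/\sqrt d,\dots,2/\sqrt d)$, and the $\ell_1$-separated alternative $\|q-q_0\|_1\ge\rho$ on the restricted simplex $\cF$ (bounded likelihood ratio $R$) maps to an $\ell_2$-separated alternative for the Gaussian mean: on $\cF$ the map $q\mapsto\sqrt q$ is bi-Lipschitz between the $\ell_1$ and (a constant times) $\ell_2$ metric, since all coordinates are bounded away from $0$ by $c/d$, so $\|q-q_0\|_1\asymp\sqrt d\,\|\sqrt q-\sqrt{q_0}\|_2$. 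Hence separation $\rho$ in $\ell_1$ corresponds to separation $\asymp\rho/\sqrt d$ in the $\ell_2$ mean-shift problem with per-coordinate noise variance $\asymp 1/n$; rescaling to unit noise gives an effective $\ell_2$ separation of order $\rho\sqrt n/\sqrt d$ in a $d$-dimensional, $m$-server distributed Gaussian testing problem. I would also need to check that the image of $\cF$ contains a ball around $\theta_0$ of the relevant radius so that the Gaussian lower-bound construction transfers back, and that the boundedness constraint in $\cF$ only costs constants (which is where $md\to\infty$ and the fixed $R$ are used).

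Now I would plug into the Gaussian result of~\cite{szabo2023optimal}: for the $d$-dimensional distributed Gaussian mean testing problem with $m$ servers, per-server noise level $1$ (after rescaling; here the $n$ samples give effective noise $1/\sqrt n$, absorbed into the scaling above), the minimax $\ell_2$-separation radius $\varrho$ satisfies, for shared randomness, $\varrho^2\asymp \big(\tfrac{d}{\sqrt{d\wedge b}\,m}\big)\wedge\big(\tfrac{\sqrt d}{\sqrt m}\big)$ (up to the $1/n$ factor), and for local randomness the first branch becomes $\tfrac{d^{3/2}}{(d\wedge b)m}$. Undoing the substitution — $\varrho^2\asymp\rho^2 n/d$ — yields exactly~\eqref{eq:minimax_rate_multinomial_public_coin_bandwidth} and~\eqref{eq:minimax_rate_multinomial_private_coin_bandwidth}. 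The equivalence supplies both directions simultaneously: an optimal Gaussian test pulls back to a bandwidth-$b$ multinomial test achieving risk $\to 0$ when $M_\nu\to\infty$, and any multinomial test pushes forward to a Gaussian test, so the Gaussian lower bound forces risk $\to 1$ when $M_\nu\to 0$; the $o(1)$ total deficiency guarantees these transfers cost only $o(1)$ in risk.

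\textbf{Main obstacle.} The delicate part is the bookkeeping that makes the local asymptotic equivalence of~\cite{carter2002multinomial} interact correctly with the distributed/communication-constrained structure: one must verify that the randomizations witnessing equivalence act on the server level and commute with (or can be absorbed into) the $b$-bit kernels without inflating the communication cost, and that the Le Cam deficiencies add up across the $m$ servers to something $o(1)$ — this is exactly why the hypothesis is $md\log d/\sqrt n\to 0$ rather than just $d\log d/\sqrt n\to 0$. A secondary technical point is handling the support restriction $\cF$ (ratio $\le R$): the raw Carter equivalence is for the full simplex, so I would restrict both experiments to $\cF$ and check that the relevant separation balls for upper and lower bounds live inside $\cF$'s image, incurring only constant factors. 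Everything else — the $\ell_1\leftrightarrow\ell_2$ bi-Lipschitz comparison on $\cF$, the variance-stabilizing change of variables, and the rescaling of the Gaussian rate — is routine.
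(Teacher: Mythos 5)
Your overall strategy is the paper's: transfer the Gaussian rates through Carter's equivalence, using that composing the equivalence kernel with a $b$-bit kernel preserves the constraint (Lemma~\ref{lem:comm_constraints_carry_over}) and that the deficiencies add up to $O(md\log d/\sqrt n)=o(1)$ over the $m$ servers (Lemmas~\ref{thm:le_cam_dist_general} and~\ref{cor:deficiency_testing_consequence_appendix}). However, the step where you translate the hypotheses contains a genuine error. The claimed bi-Lipschitz relation $\|q-q_0\|_1\asymp\sqrt d\,\|\sqrt q-\sqrt{q_0}\|_2$ on $\cF$ is false: since $q_i\asymp 1/d$ on $\cF$ one only gets $\|q-q_0\|_1\asymp d^{-1/2}\|\sqrt q-\sqrt{q_0}\|_1$, and because $\|\cdot\|_1$ and $\|\cdot\|_2$ on $\R^d$ differ by up to $\sqrt d$, the sharpest two-sided statement is $d^{-1/2}\|\sqrt q-\sqrt{q_0}\|_2\lesssim\|q-q_0\|_1\lesssim\|\sqrt q-\sqrt{q_0}\|_2$; $\ell_1$-balls and Hellinger balls are \emph{not} comparable up to constants. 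Your substitution $\varrho^2\asymp\rho^2 n/d$ then produces a rate off by a factor of $d$ from~\eqref{eq:minimax_rate_multinomial_public_coin_bandwidth}. The correct accounting has no dimensional factor at all: the Gaussian problem to compare with is $H_0:\sqrt q=\sqrt{q_0}$ versus $\|\sqrt q-\sqrt{q_0}\|_2\ge\rho/2$ at noise level $1/\sqrt{2n}$, whose separation rate is literally the right-hand side of~\eqref{eq:minimax_rate_multinomial_public_coin_bandwidth}. The upper bound needs only the one-sided inequality $\|q-q_0\|_1\le 2\|\sqrt q-\sqrt{q_0}\|_2$ (Lemma~\ref{lem:L1-to-Hellinger}); the matching lower bound cannot follow from a metric equivalence and must instead exhibit a prior on the alternative for which the $\ell_1$- and Hellinger-separations happen to coincide in order — the paper uses dense perturbations $q^f_i=1/d\pm f_i/\sqrt d$ with Gaussian $f$, for which $\|f\|_1\asymp\sqrt d\,\|f\|_2$ and hence $\|q^f-q_0\|_1\asymp\|\sqrt{q^f}-\sqrt{q_0}\|_2$.

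Relatedly, the lower-bound transfer is substantially more than "check that the image of $\cF$ contains a ball of the relevant radius": the image of the simplex under $q\mapsto\sqrt q$ is a piece of the unit sphere, so the Gaussian prior of Lemma~\ref{lem:bandwidth_summarization_lemma} cannot be supported there directly. The paper's antisymmetric parametrization of a sub-alternative $\cF'\subset\cF$, the sufficiency reduction to an unconstrained $(d/2)$-dimensional Gaussian problem (Lemma~\ref{lem:left_right_sufficiency}), the verification that the prior puts vanishing mass outside $H_\rho'$, and the control of the linearization error $\|\sqrt{q_0+2g_f/\sqrt d}-\sqrt{q_0}-g_f\|_2\lesssim\sqrt{mn}\,\rho^2$ (a second, separate use of $md\log d/\sqrt n\to 0$ beyond the Le Cam bound) are all needed and are absent from your outline. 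Without these, the "push forward to a Gaussian test" direction of your argument does not yield the stated constant $1$ limit for $M_\nu\to 0$.
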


The theorem above shows that the minimax rate for the distributed multinomial model under bandwidth constraints is given by~\eqref{eq:minimax_rate_multinomial_public_coin_bandwidth} in the case of access to shared randomness, and~\eqref{eq:minimax_rate_multinomial_private_coin_bandwidth} in the case of no access to shared randomness. Both rates are the same as those established for a signal detection problem in a $d$-dimensional distributed Gaussian model, as derived in~\cite{szabo2023optimal}, Theorems 3.1 and 3.2. In Section~\ref{sec:statistical_equivalence_proof_technique}, we shall provide a proof of this result through the notion of statistical equivalence, where we explicitly use that the multinomial model is asymptotically similar to a specific Gaussian model and a corresponding signal detection problem. 

The distributed $b$-bit bandwidth constraint minimax rate for the hypotheses~\eqref{eq:multinomial_uniformity_test_hypothesis} in the multinomial model with $n=1$ is established in~\cite{acharya_IEEE_identity_testing_part_I,acharya_IEEE_identity_testing_part_II}. Specifically, they find that
    \begin{equation}\label{eq:distributed_multinomial_rate}
    \rho^2 \asymp \begin{cases}
    \frac{d}{m \sqrt{2^b \wedge d}}\; \text{ in case of access to shared randomness,} \\
    \frac{d \sqrt{d}}{m (2^b \wedge {d})}\; \text{ without access to shared randomness.}
    \end{cases}
    \end{equation}
Several aspects of this minimax rate are intriguing. First, unlike in the ``large $n$ case'' for the same model and hypothesis (\eqref{eq:minimax_rate_multinomial_public_coin_bandwidth} and~\eqref{eq:minimax_rate_multinomial_private_coin_bandwidth}), there is no elbow effect. 
Secondly, the benefit (i.e.\ ``efficiency gain'') from an increase in bandwidth is exponential, whereas in the large sample scenario of Theorem \ref{thm:bandwidth_lb_finite_dim} it is sub-linear. We shall comment on this ``communication super-efficiency'' phenomenon further below. 
    
We now turn to the distributed multinomial model under differential privacy constraints. As in the case of the bandwidth constraint uniformity testing problem, we shall show that the minimax rate for the distributed multinomial model under differential privacy constraints is the same as the minimax rate for a $d$-dimensional distributed Gaussian model, as derived in~\cite{Cai2024FL-NP-Testing}. 

The following theorem describes that the above rates are the minimax rates for uniformity testing in the distributed multinomial model under differential privacy constraints, for shared randomness and local randomness only protocols, respectively.

\begin{theorem}\label{thm:multinomial_equivalence_privacy}
    For any sequences $m \equiv m_\nu$, $d \equiv d_\nu$ and $n \equiv n_\nu$ such that $md \to \infty$, $\frac{m d \log d }{ \sqrt{n} } \overset{\nu\to \infty}{\to} 0$, $n^{-1/4} \ll \epsilon \equiv \epsilon_\nu \leq 1$, $\delta \equiv \delta_\nu \asymp {(md)}^{-p}$ for some $p > 1$. The minimax separation rate in the distributed multinomial model $\cQ$ for testing the hypotheses~\eqref{eq:multinomial_uniformity_test_hypothesis} using locally $(\epsilon,\delta)$-differentially private protocols is
    \begin{equation}\label{eq:minimax_rate_multinomial_public_coin_privacy}
    \rho^2 \asymp \text{poly-log}(d,m,n) \begin{cases}
    \frac{d}{mn \epsilon^2} \; \text{ if } \; \epsilon \geq \frac{\sqrt{d}}{\sqrt{m}}, \\
    \frac{\sqrt{d}}{\sqrt{m} n \epsilon} \; \text{ if } \; \frac{1}{\sqrt{md}} \leq \epsilon < \frac{\sqrt{d}}{\sqrt{m}}
    \end{cases}
    \end{equation}
    in the case of having access to shared randomness. In the case of having only access to local randomness, it is given by
    \begin{equation}\label{eq:minimax_rate_multinomial_private_coin_privacy}
    \rho^2 \asymp \text{poly-log}(d,m,n) \begin{cases}
    \frac{d\sqrt{d}}{mn \epsilon^2} \; \text{ if } \; \epsilon \geq \frac{{d}}{\sqrt{m}}, \\
    \frac{\sqrt{d}}{\sqrt{m} n \epsilon} \; \text{ if } \; \frac{1}{\sqrt{md}} \leq \epsilon < \frac{{d}}{\sqrt{m}}.
    \end{cases}
    \end{equation}
\end{theorem}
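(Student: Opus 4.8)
The plan is to deduce Theorem~\ref{thm:multinomial_equivalence_privacy} from the already-established minimax rates for the signal detection problem in the $d$-dimensional distributed Gaussian model under local differential privacy (i.e.~\cite{Cai2024FL-NP-Testing}) by transferring those rates across the Le Cam equivalence between the multinomial model $\cQ$ and an appropriate Gaussian shift model. Concretely, I would first invoke the Carter-type multinomial--Gaussian approximation of~\cite{carter2002multinomial}: under the hypothesis $md\log d/\sqrt{n}\to 0$, the model $\{Q_{n,q}^{\,}\,:\,q\in\cF\}$ of $n$ i.i.d.\ multinomial draws per server is asymptotically equivalent (in the Le Cam $\Delta$-distance sense, with vanishing deficiency) to a Gaussian location model where server $j$ observes $Z^{(j)}\sim N(g(q),\,\tfrac{1}{4n}I_d)$ (after the variance-stabilizing/rooting transform $q\mapsto\sqrt{q}$, which on $\cF$ maps the $\ell_1$ separation $\|q-q_0\|_1\geq\rho$ to an $\ell_2$ separation of the same order up to constants depending on $R$). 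Since $\delta\asymp(md)^{-p}$ with $p>1$ is polynomially small and $\epsilon\le 1$, the privacy constraint is stable under this coupling; the key point, to be made precise using the distributed-equivalence machinery of Section~\ref{ssec:equiv_of_models_in_dist_setting}, is that a $\Delta$-distance bound between the base experiments lifts to a $\Delta$-distance bound between the corresponding distributed experiments \emph{uniformly over all} $(\epsilon,\delta)$-DP protocols (shared- or local-randomness), because composing each machine's equivalence-inducing kernel \emph{before} its privatizing kernel preserves the DP guarantee and only inflates the testing risk by the vanishing deficiency.

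Having reduced to the Gaussian problem, the second step is to read off the Gaussian detection boundary from~\cite{Cai2024FL-NP-Testing}: for $m$ servers each seeing one $N(\theta,\sigma^2 I_d)$ observation with $\sigma^2\asymp 1/n$, under $(\epsilon,\delta)$-LDP the separation rate (in squared $\ell_2$) has the stated two-regime form, $\tfrac{d\sigma^2}{m\epsilon^2}$ in the high-privacy-budget regime and $\tfrac{\sqrt d\,\sigma^2}{\sqrt m\,\epsilon}$ below the elbow, for shared randomness, and with $d^{3/2}$ replacing $d$ above the (shifted) elbow for local randomness only; substituting $\sigma^2\asymp 1/n$ gives exactly~\eqref{eq:minimax_rate_multinomial_public_coin_privacy}–\eqref{eq:minimax_rate_multinomial_private_coin_privacy} up to the poly-log factors. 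The poly-log slack is precisely where the reduction is not lossless: the equivalence from~\cite{carter2002multinomial} and the handling of the boundary of the simplex (e.g.\ the $\log d$ in the hypothesis $md\log d/\sqrt n\to 0$, and possible clipping of the transformed coordinates to keep the DP sensitivity under control) introduce logarithmic factors, and the lower bound transfer likewise loses logs, so one can only claim $\rho^2\asymp\mathrm{poly\text{-}log}(d,m,n)\times(\text{Gaussian rate})$ rather than a sharp constant.

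Concretely I would organize the proof as: (i) state the base-experiment equivalence $\Delta\big(\cQ_{\text{base}},\,\cG_{\text{base}}\big)=o(1)$ with explicit randomizations in both directions, citing~\cite{carter2002multinomial} and checking the regime $md\log d/\sqrt n\to 0$ suffices for the $m$-fold product (the deficiency of a product of experiments is controlled by $m$ times the single-experiment deficiency, so $m\cdot(d\log d/\sqrt n)=o(1)$ is what is really needed); (ii) a distributed-transfer lemma: if $\Delta(\mathcal{E}_1,\mathcal{E}_2)\le\eta$ at the per-server level, then for every protocol $T$ in $\mathscr{T}^{(\epsilon,\delta)}_{\bullet}$ for one distributed experiment there is a protocol $T'$ in $\mathscr{T}^{(\epsilon,\delta)}_{\bullet}$ for the other with $|\cR(T')-\cR(T)|\lesssim m\eta$ — this is the step I expect to be the main obstacle, since one must verify that the kernel realizing the equivalence can be \emph{prepended} to the privatization kernel without breaking Definition~\ref{def:distributed_DP_protocol} (post-processing/pre-processing of the privatized output is fine; pre-processing the \emph{input} by a fixed data-independent Markov kernel is also fine since the DP inequality~\eqref{eq:privacy_constraint} is required for all $x,x'$ and is preserved under $K^j\mapsto K^j(\cdot\,|\,\Phi(x),u)$ for deterministic or randomized-but-data-coupled $\Phi$, but the direction Gaussian$\to$multinomial requires care about measurability and about the shared randomness being usable to implement the randomization); (iii) apply the transfer in both directions together with the Gaussian minimax bounds of~\cite{Cai2024FL-NP-Testing}, matching the condition $n^{-1/4}\ll\epsilon\le1$ to the corresponding condition in that paper ($\sigma^{1/2}\ll\epsilon$, i.e.\ the privatization noise dominates the Gaussian noise in the relevant sense), and absorb all approximation losses into the poly-log prefactor. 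The $\ell_1$-to-$\ell_2$ passage on $\cF$ and the behavior of $H_\rho$ under $q\mapsto\sqrt q$ — bounded curvature because $q_i\asymp 1/d$ uniformly on $\cF$ — is routine but must be spelled out to make the separation radius match on both sides.
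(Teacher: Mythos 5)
Your overall strategy --- Carter's multinomial--Gaussian equivalence, a distributed transfer lemma showing that prepending the equivalence kernel to each server's privatization kernel preserves the $(\epsilon,\delta)$-DP constraint and costs at most $m$ times the per-server deficiency, then reading off the Gaussian rates from~\cite{Cai2024FL-NP-Testing} --- is exactly the paper's strategy, and your steps (i) and (ii) match Lemmas~\ref{lem:comm_constraints_carry_over},~\ref{thm:le_cam_dist_general} and~\ref{cor:deficiency_testing_consequence_appendix}. The upper-bound direction as you describe it (sufficiency recentering, $\ell_1$-to-Hellinger so that $\|q-q_0\|_1\geq\rho$ forces $\|\sqrt q-\sqrt{q_0}\|_2\geq\rho/2$, then invoke the rate-optimal Gaussian protocol) is also correct.

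The gap is in your step (iii) for the lower bound, which you present as a symmetric application of the transfer together with the Gaussian minimax lower bound. That does not work as stated: after transferring to the Gaussian side, the alternative is $\{\|\sqrt q-\sqrt{q_0}\|_2\gtrsim\rho,\ q\in\cF\}$, a \emph{restricted} subset of the unconstrained alternative $\{\|f\|_2\geq\rho\}$ for which~\cite{Cai2024FL-NP-Testing} proves impossibility; a lower bound over a larger alternative does not imply one over a smaller alternative, so the simplex constraint $\sum_i q_i=1$ (and the range constraint defining $\cF$) must be dealt with explicitly. The paper does this by re-running the Bayes-risk argument: it embeds an unconstrained $f\in\R^{d/2}$ antisymmetrically via $q^f_i=1/d+f_i/\sqrt d$ for $i\leq d/2$ and $q^f_i=1/d-f_i/\sqrt d$ for $i>d/2$ (so the simplex constraint holds automatically), reduces by sufficiency (Lemma~\ref{lem:left_right_sufficiency}) to a $d/2$-dimensional unconstrained Gaussian location model, controls the nonlinearity of $q\mapsto\sqrt q$ by a Taylor expansion giving $\|P^{nm}_{\sqrt{q^f}-\sqrt{q_0}}-P^{nm}_{g_f}\|_{\mathrm{TV}}\lesssim\sqrt{mn}\,\rho^2\to0$, and only then applies the Gaussian-prior lower bound (Lemma~\ref{lem:privacy_summarization_lemma}), checking that the prior $N(0,d^{-1}(\rho^*)^2\bar\Gamma)$ puts vanishing mass outside $H_\rho$ and outside the box $I_R^{d/2}/\sqrt d$. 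You flag the $\ell_1$-to-$\ell_2$ passage as ``routine,'' but this prior construction and the TV approximation of the nonlinear shift are the substantive content of the lower-bound proof and are missing from your outline. (A minor point: the poly-log prefactor in the statement is inherited from the Gaussian rates of~\cite{Cai2024FL-NP-Testing}, not from losses in the Le Cam transfer, which contributes only an additive $o(1)$ to the risk.)
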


We provide a proof of the theorem in Section~\ref{sec:proofs_multinomial} of the appendix. As with the bandwidth constraint case, the minimax separation rates for the distributed multinomial model under differential privacy constraints are derived by comparing the model and hypothesis test to a signal detection problem for the $d$-dimensional distributed Gaussian model. The rates for the latter problem follow from the proofs of Theorems 4 and 5 in~\cite{Cai2024FL-NP-Testing}, who describe a more general setup which includes signal detection in the $d$-dimensional distributed Gaussian model as a special case\footnote{The above rates do not observe all phase transitions present in the more general setup of~\cite{Cai2024FL-NP-Testing}, as it pertains to the local differential privacy case in that paper, with $\sigma = 1/\sqrt{n}$ in their notation.}. 

Also in the case of privacy, there is a difference between the one observation per server case minimax rate ($n = 1$) and the multiple observations per server with local differential privacy case. The minimax rate in the multinomial model for $n=1$ is derived in~\cite{acharya_IEEE_identity_testing_part_I,acharya_IEEE_III};
    \begin{equation}\label{eq:distributed_multinomial_rate_privacy}
    \rho^2 \asymp \begin{cases}
    \frac{d}{m \epsilon^2}\; \text{ in case of access to shared randomness,} \\
    \frac{d \sqrt{d}}{m \epsilon^2}\; \text{ without access to shared randomness.}
    \end{cases}
    \end{equation}
    Comparing this rate to the rate obtained in Theorem~\ref{thm:multinomial_equivalence_privacy}, we observe phase transitions in the distributed testing problem for multinomial model under local differential privacy constraints which only occurs if the number of observations locally is large compared to the cardinality of the sample space.

\section{Deriving the minimax rates through statistical equivalence}\label{sec:statistical_equivalence_proof_technique}

The minimax rates for the distributed multinomial model under bandwidth and differential privacy constraints are derived through the notion of statistical equivalence (Le Cam theory), which is a powerful tool for establishing minimax rates in statistical decision theoretic problems. In this section, we shall provide a brief introduction to statistical equivalence, and show how it can be used to derive the minimax rates for the distributed multinomial model under bandwidth and differential privacy constraints. Further details on the statistical equivalence and a detailed proof are deferred Section~\ref{sec:distributed_le_cam_general} of the appendix.

Le Cam theory is a general framework for decision problems. At the core of this theory is the notion of a distance between statistical models, known as Le Cam's deficiency distance. The objective of this distance is to quantify the extent to which a complex statistical model can be approximated by a more simple one. If a model is close to another model in Le Cam's distance, then there is a mapping of solutions to decision theoretic problems from one model to the other. Whenever the risk of the decision problem is bounded, this means that similar performance can be achieved in the two models. Consequently, studying the complex model can be reduced to studying the corresponding simple model. For an extensive introduction to Le Cam theory, see e.g.~\cite{le_cam_2012asymptotic, strasser_mathematical_1985}. For a brief introduction;~\cite{le_cam_2000asymptotics,mariucci2016cam}.

Consider a model $\cP = \{ P_f : f \in \cF \}$ (a collection of probability distributions) on a measurable space $(\cX,\mathscr{X})$ (the sample space). For this article, we consider only models with Polish sample spaces and corresponding Borel sigma-algebras and dominated models, meaning that there exists a sigma-finite measure $\mu$ such that $P_f \ll \mu$ for all $f \in \cF$. This greatly simplifies the definition of deficiency, given next. 

Given another model $\cQ = \{ Q_f : f \in \cF\}$ indexed by the same set $\cF$ and sample space $(\tilde{\cX},\tilde{\mathscr{X}})$, we define the \emph{deficiency of $\cP$ with respect to $\cQ$} as
\begin{equation}\label{eq:def_defiency}
\mathfrak{d}(\cP;\cQ) = \underset{C}{\inf} \, \underset{f \in \cF}{\sup} \, \| P_f C - Q_f \|_{\mathrm{TV}}.
\end{equation}
where the infimum is taken over all Markov kernels $C: \tilde{\mathscr{X}} \times {\cX} \to [0,1]$ and the probability measure $P_f C : \tilde{\mathscr{X}} \to [0,1]$ is understood as $P_f C (A) := \int_{x \in \cX} C(A|x) dP_f(x)$. This is equivalent to the more general notion of deficiency of~\cite{le_cam_1964} for dominated models on Polish spaces (see Proposition 9.2 in~\cite{nussbaum_1996}).

Le Cam's deficiency distance between $\cP$ and $\cQ$ is then defined as $\Delta (\cP,\cQ) = \max \left\{ \mathfrak{d}(\cP;\cQ), \mathfrak{d}(\cQ,\cP) \right\}$. This semi-metric becomes a metric whenever $\cP$ and $\cQ$ are identified whenever $\mathfrak{d}(\cP;\cQ) + \mathfrak{d}(\cQ,\cP) = 0$. Two sequences of experiments $\cP_\nu$ and $\cQ_\nu$ are called \emph{asymptotically equivalent} if their difference $\Delta(\cP_\nu,\cQ_\nu)$ tends to zero as $\nu$ approaches infinity. Conversely, such sequences shall be called \emph{asymptotically nonequivalent} if $\Delta(\cP_\nu,\cQ_\nu) > c$ as $\nu \to \infty$ for a fixed constant $c>0$.

In Section~\ref{ssec:equiv_of_models_in_dist_setting}, we prove that models that are close in the Le Cam metric (compared to $m$) have similar testing risks in the distributed setup. We leverage this result in combination with the fact that the distributed multinomial model is asymptotically equivalent to a $d$-dimensional distributed Gaussian model, which we describe next.

Consider for $q \in \cF$ and $i=1,\dots,d$ the random variables
\begin{equation}\label{eq:gaussian_equiv_model}
X^{(j)}_i = \sqrt{q_i} + \frac{1}{\sqrt{2n}} Z^{(j)}_i \quad \text{ with } \quad Z^{(j)} = (Z^{(j)}_1,\dots,Z^{(j)}_d) \sim N(0,I_d).
\end{equation}
Let $P_f \equiv P_f^n$ denote the distribution of $X^{(j)} = (X^{(j)}_1,\dots,X^{(j)}_d)$. Let $\cP$ denote the corresponding experiment. It is shown in~\cite{carter2002multinomial} that $\cQ$ is close to $\cP$ in the Le Cam metric when $d$ is relatively small compared to $n$. More precisely, it follows from Theorem 1 and Section 7 in~\cite{carter2002multinomial} that
\begin{equation}\label{eq:carters_Le_Cam_distance_bound_multinomial}
\Delta(\cP,\cQ) \leq C_R \frac{d \log d}{\sqrt{n}},
\end{equation}
where $C_R > 0$ is a constant depending only on $R$. For the testing problem in Gaussian model, with hypotheses~\eqref{eq:multinomial_uniformity_test_hypothesis}, the minimax rate can be derived using the results of~\cite{szabo2023optimal} in case of bandwidth constraints and~\cite{Cai2024FL-NP-Testing} in case of differential privacy constraints. The key tool from which the minimax rates can then be derived for the multinomial model is the following lemma, which allows comparison of the minimax testing risks for the multinomial and Gaussian models in regimes where the Le Cam distance is small. Its proof is given in Section~\ref{ssec:equiv_of_models_in_dist_setting} of the appendix.

\begin{lemma}\label{lem:deficiency_testing_consequence}
    Suppose $m\Delta(\cQ;\cP) \leq \varrho$ for $\varrho > 0$. Then, it holds that
    \begin{equation*}
    \left|\underset{T \in \mathscr{T}(\cP)}{\inf} \; \cR_\cP(T,H_{{1}}) - \underset{T \in \mathscr{T}(\cQ)}{\inf} \; \cR_\cQ(T,H_{1}) \right| \leq 2\varrho,
    \end{equation*}
    where $\mathscr{T}$ is either $\mathscr{T}^{b}_{\text{SR}}, \mathscr{T}^{b}_{\text{LR}}, \mathscr{T}^{(\epsilon,\delta)}_{\text{SR}}$ or $\mathscr{T}^{(\epsilon,\delta)}_{\text{LR}}$.
\end{lemma}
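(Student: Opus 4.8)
The plan is to transfer a near-optimal protocol from one model to the other using the Markov kernel witnessing the deficiency bound, and control the incurred loss by $m$ times the per-server deficiency, together with a standard bound on how total variation distance between product measures accumulates. I would first prove the direction $\inf_{T \in \mathscr{T}(\cQ)} \cR_\cQ(T,H_1) \leq \inf_{T \in \mathscr{T}(\cP)} \cR_\cP(T,H_1) + 2\varrho$, and then note that the reverse follows by symmetry of the argument together with the definition $\Delta(\cQ;\cP) = \max\{\mathfrak{d}(\cQ;\cP),\mathfrak{d}(\cP;\cQ)\}$, which bounds both one-sided deficiencies by $\Delta(\cQ;\cP)$.

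For the main direction: fix $\eta > 0$ and a protocol $T = \{T, \{K^j\}_{j=1}^m, (\cU,\mathscr{U},\P^U)\} \in \mathscr{T}(\cP)$ with $\cR_\cP(T,H_1) \leq \inf_{T' \in \mathscr{T}(\cP)} \cR_\cP(T',H_1) + \eta$. By the definition of deficiency~\eqref{eq:def_defiency}, for every $\gamma>0$ there is a single Markov kernel $C : \mathscr{X} \times \tilde{\cX} \to [0,1]$ (from the sample space of $\cQ$ to that of $\cP$) with $\sup_{f \in \cF} \| Q_f C - P_f \|_{\mathrm{TV}} \leq \mathfrak{d}(\cQ;\cP) + \gamma$. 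Since $\mathscr{X}$ is Polish, one may realize $C$ as a deterministic map applied to the data together with auxiliary local randomness; I would fold this auxiliary randomness into the server's local randomness (which is permitted for all four classes $\mathscr{T}^{b}_{\text{SR}}, \mathscr{T}^{b}_{\text{LR}}, \mathscr{T}^{(\epsilon,\delta)}_{\text{SR}}, \mathscr{T}^{(\epsilon,\delta)}_{\text{LR}}$, since these allow arbitrary private randomization pre-processing) and define new kernels $\tilde{K}^j := C K^j$ acting on data from $\cQ$. One must check that $\tilde T := \{T, \{\tilde K^j\}_{j=1}^m, (\cU,\mathscr{U},\P^U)\}$ lies in the same class $\mathscr{T}(\cQ)$: the bandwidth constraint is preserved because the output alphabet $\cY^{(j)}$ is unchanged, and the $(\epsilon,\delta)$-privacy constraint is preserved because pre-processing the data by a (data-independent-kernel) channel $C$ and then applying an $(\epsilon,\delta)$-private kernel yields an $(\epsilon,\delta)$-private kernel (post-processing/pre-processing invariance of DP — the composition $C K^j$ satisfies~\eqref{eq:privacy_constraint} whenever $K^j$ does, for every fixed $u$).

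It then remains to compare the risks. The transcript law under $\tilde T$ with data from $Q_f$ is $Q_f^m \P^U (C^{\otimes m} K) = \P^U (Q_f C)^{\otimes m} K$, whereas under $T$ with data from $P_f$ it is $\P^U P_f^{\otimes m} K$. Writing out $\cR_\cQ(\tilde T, H_1)$ and $\cR_\cP(T,H_1)$ as sums of the (bounded, in $[0,1]$) quantities $\P^Y T(Y)$ and $\P^Y(1-T(Y))$, each difference of corresponding terms at a fixed $f$ is bounded by $\| (Q_f C)^{\otimes m} K \P^U - P_f^{\otimes m} K \P^U \|_{\mathrm{TV}} \leq \| (Q_f C)^{\otimes m} - P_f^{\otimes m}\|_{\mathrm{TV}}$, using that Markov kernels (here $K$ and the mixing over $\P^U$) are non-expansive in total variation. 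The subadditivity of total variation for product measures gives $\| (Q_f C)^{\otimes m} - P_f^{\otimes m}\|_{\mathrm{TV}} \leq m \| Q_f C - P_f\|_{\mathrm{TV}} \leq m(\mathfrak{d}(\cQ;\cP)+\gamma) \leq m\Delta(\cQ;\cP) + m\gamma \leq \varrho + m\gamma$. Taking the supremum over $f \in H_1 \cup \{f_0\}$ (the null is a single point, so this is uniform) yields $|\cR_\cQ(\tilde T,H_1) - \cR_\cP(T,H_1)| \leq 2(\varrho + m\gamma)$, hence $\inf_{T' \in \mathscr{T}(\cQ)}\cR_\cQ(T',H_1) \leq \cR_\cP(T,H_1) + 2\varrho + 2m\gamma \leq \inf_{T' \in \mathscr{T}(\cP)}\cR_\cP(T',H_1) + \eta + 2\varrho + 2m\gamma$. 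Letting $\eta,\gamma \downarrow 0$ gives the claim in one direction; swapping the roles of $\cP$ and $\cQ$ (and using that $\Delta$ also dominates $\mathfrak{d}(\cP;\cQ)$) gives the other.

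\textbf{Main obstacle.} The routine calculations (product-measure TV subadditivity, non-expansiveness of kernels) are standard; the delicate point is the bookkeeping in the transfer step — verifying that composing the deficiency-witnessing kernel $C$ with the original protocol genuinely lands back in the \emph{same} constrained class $\mathscr{T}(\cQ)$, in particular that this composition respects the $(\epsilon,\delta)$-DP constraint in Definition~\ref{def:distributed_DP_protocol} (a pre-processing argument, clean once stated but requiring that $C$ be data-to-data and independent of the privatizing mechanism) and that the bandwidth alphabet bound in Definition~\ref{def:b-bit_bandwidth_protocol} is untouched. A secondary subtlety is that~\eqref{eq:def_defiency} only yields an $\epsilon'$-approximate minimizing kernel $C$, so the argument must be run with a vanishing slack $\gamma$ and the infima taken at the end; this is why the bound comes out as $2\varrho$ rather than something involving a strict minimizer. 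These are the points I would write out carefully; everything else is mechanical.
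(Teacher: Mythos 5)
Your proposal is correct and follows essentially the same route as the paper's proof (Lemmas~\ref{lem:comm_constraints_carry_over},~\ref{thm:le_cam_dist_general} and~\ref{cor:deficiency_testing_consequence_appendix}): compose the deficiency-witnessing kernel $C$ with the protocol's kernels, check that bandwidth and $(\epsilon,\delta)$-DP constraints are preserved under this pre-processing, and bound the two risk terms via subadditivity of total variation over the $m$-fold product together with the data-processing inequality. The only cosmetic differences are that you carry an explicit slack $\gamma$ for the approximate minimizer (the paper instead invokes attainment of the infimum for dominated models on Polish spaces) and that your realization of $C$ as a deterministic map plus auxiliary randomness is an unnecessary detour, since $CK^j$ is already a Markov kernel.
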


\section{Statistical non-equivalence of discrete and multivariate Gaussian distributions}\label{sec:nonequivalence_result}

Theorem~\ref{thm:multinomial_equivalence_bandwidth} describing uniformity testing in the large sample regime and the result derived for $n=1$, as displayed in~\eqref{eq:distributed_multinomial_rate}, shows a striking difference terms of the role of the communication budget. Specifically, in the $n=1$ regime, an exponential communication efficiency is observed, whereas in the large sample regime, the benefit is only linear. In this section, we shall provide some explanation for this phenomenon, and shall actually leverage this difference to show that the distributed multinomial model and the distributed Gaussian model are {asymptotically non-equivalent}: Two models are considered \emph{asymptotically nonequivalent} if their Le Cam distance remains bounded away from zero, even as the amount of data increases in both models. 

The multinomial model is equivalent to a model in which one observes $N^{(j)}=(N_1^{(j)},\dots,N_d^{(j)})$ taking values in ${\{1,\dots,n\}}^d$, where $N_k^{(j)} \equiv N_k^{(j)}\left(\tilde{X}^{(j)}\right) = \left|\left\{i: \tilde{X}^{(j)}_i = k\right\}\right|$. Let $\cQ'$ denote the model generated by the observations $N^{(j)}$. This model is equivalent to $\cQ$, meaning $\Delta(\cQ,\cQ') = 0$. To see this, note that for $x=(x_1,\dots,x_n) \in {\{1,\dots,d\}}^n$, 
\begin{align*}
Q\left( \tilde{X}^{(j)} = x \right) = \underset{i=1}{\overset{n}{\Pi}} Q\left( \tilde{X}^{(j)}_i = x_i \right) = \underset{k \in \{1,\dots,d\}}{\overset{}{\Pi}} q_k^{|\{i: x_i = k\}|} \text{ for all } Q \in \cQ,
\end{align*}
after which the aforementioned equivalence follows by the Neyman-Fisher factorization criterion, e.g.\ Lemma~\ref{lem:neyman-fisher_factorization} in the appendix. When $n$ is large compared to $d$, one could standardize the count statistics $N^{(j)}$ to obtain a statistic that tends towards a $d$-dimensional Gaussian random vector. When $d$ and $m$ are not too large with respect to $n$, one can obtain transcripts and corresponding test statistics from these approximately Gaussian vectors, that resemble those one would consider in the Gaussian model, and attain the corresponding minimax rates. 

Since the observation $N^{(j)}$ takes values in ${\{1,\dots,n\}}^d$, the full data can be transmitted whenever there are at least $d \log_2 n$-bits are available per server. However, recalling that the observation $\tilde{X}^{(j)}$ takes values in the space ${\{1,\dots,d\}}^n$, which has cardinality bounded above by $2^{n \log_2 d}$, we also obtain that the full data can be transmitted whenever $n \log_2 d$-bits are available. Consequently, whenever 
\begin{equation*}
b \gtrsim d \log_2 (n+1) \wedge n \log_2 d,
\end{equation*}
the distributed problem has the same minimax separation rate for the hypothesis in~\eqref{eq:multinomial_uniformity_test_hypothesis} as the unconstrained problem with $nm$ observations; $\rho_\cQ^2 \asymp \frac{\sqrt{d}}{mn}$. For the Gaussian problem, this is only the case whenever $b \gtrsim d$, as can be seen from Theorem~\ref{thm:bandwidth_lb_finite_dim}. This indicates a kind of ``tipping point'' occuring whenever $n$ gets small compared to $d$, where in a bandwidth constraint distributed setting, the testing problem in for the Gaussian model starts to exhibit very different behavior. 

Interestingly, this does not imply that the multinomial model is ``easier'' from a distributed testing under bandwidth constraints perspective, as there are sub-regimes in which the Gaussian model has a solution whereas the multinomial model does not and vice versa. It indicates that the ``communication complexity'' of the sample space matters in the respective decision problems. We can leverage this fact to obtain a lower-bound on the Le Cam distance between the multinomial model and the Gaussian model; which is the content of the next theorem.

\begin{theorem}\label{thm:multinomial_gaussian_le_cam_distance_lower_bound}
There exists constants $C > 0$ and $c > 0$ such that for any $n,d \in \N$ with
\begin{equation}\label{eq:d_n_condition_multinomial_lb}
\frac{d }{n \log(d)} \geq C \quad \text{ and } \quad n \geq \sqrt{d} \log(d),
\end{equation}
it holds that
    \begin{equation}
    \mathfrak{d}(\cQ,\cP) \geq c,
    \end{equation}
    where $\cP$ is the experiment generated by the observations in~\eqref{eq:gaussian_equiv_model}, $\cQ$ is generated according to~\eqref{eq:multinomial_generating_equation_1}, both indexed by $\cF$ as given in~\eqref{eq:index_set_multinomial}.
\end{theorem}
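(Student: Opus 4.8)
The plan is to argue by contradiction, combining the distributed transfer mechanism behind Lemma~\ref{lem:deficiency_testing_consequence} with the ``communication super-efficiency'' phenomenon discussed in Section~\ref{sec:nonequivalence_result}. Suppose $\mathfrak{d}(\cQ,\cP)<c$ for a constant $c$ to be fixed, witnessed by a Markov kernel $C$ from $\{1,\dots,d\}^n$ into the Gaussian sample space of~\eqref{eq:gaussian_equiv_model} with $\sup_{q\in\cF}\|Q_q^n C-P_q^n\|_{\mathrm{TV}}<c$. Fix a bandwidth budget $b$ and a number of servers $m$, and take any protocol $\{T,\{K^j\}_{j=1}^m,(\cU,\mathscr U,\P^U)\}\in\mathscr T_{\text{SR}}^{(b)}$ for the Gaussian model. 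Letting each server apply $C$ to $\tilde X^{(j)}$ \emph{before} applying $K^j(\cdot\mid\cdot,U)$ yields a protocol in $\mathscr T_{\text{SR}}^{(b)}$ for the multinomial model (the composite kernels still have range of size $\le 2^b$), whose transcript law differs, under every $q$, by at most $m c$ in total variation by data processing applied per server and the product structure; hence
\begin{equation*}
\Big|\inf_{T\in\mathscr T_{\text{SR}}^{(b)}}\cR_\cQ(T,H_\rho)-\inf_{T\in\mathscr T_{\text{SR}}^{(b)}}\cR_\cP(T,H_\rho)\Big|\le 2mc\qquad\text{for all }\rho>0 .
\end{equation*}
More precisely, running the Gaussian-optimal $b$-bit protocol on the simulated data shows $\inf_{\mathscr T_{\text{SR}}^{(b)}}\cR_\cQ(\cdot,H_\rho)\le\inf_{\mathscr T_{\text{SR}}^{(b)}}\cR_\cP(\cdot,H_\rho)+2mc$.

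It then suffices to exhibit $b$, $m$ and $\rho$ for which the $b$-bit distributed \emph{Gaussian} problem is solvable (risk close to $0$) while the $b$-bit distributed \emph{multinomial} problem is not (risk close to $1$), for an $m$ bounded in terms of $C$; this is impossible once $c$ is below a constant multiple of $1/m$, which fixes $c$. This is where~\eqref{eq:d_n_condition_multinomial_lb} is used: the two models reach the unconstrained rate $\rho^2\asymp\sqrt d/(mn)$ at incomparable bandwidth thresholds — roughly $b\gtrsim d\log_2(n+1)\wedge n\log_2 d$ for the multinomial, since its observation lives in $\{1,\dots,d\}^n$ (equivalently in a count space of size $\le(n+1)^d$), versus $b\gtrsim d$ for the Gaussian (Theorem~\ref{thm:bandwidth_lb_finite_dim}, the bandwidth bound of~\cite{szabo2023optimal}) — and under~\eqref{eq:d_n_condition_multinomial_lb} these thresholds straddle a nonempty range of $b$. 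Working in the part of that range \emph{opposite} to the one used in Theorem~\ref{thm:multinomial_equivalence_bandwidth}, one finds $(b,m)$ at which the Gaussian $b$-bit separation threshold lies below the multinomial one by a multiplicative factor in $\rho^2$ that can be made an arbitrarily large absolute constant while $m$ stays bounded in terms of $C$; choosing $\rho^2$ strictly in the resulting gap and invoking the uniform sharpness of both phase transitions — which is where $n\ge\sqrt d\log d$ (hence $\log n\asymp\log d$) enters, by placing the within-server statistics ($\chi^2$-type for the Gaussian, collision count for the multinomial) in their Gaussian regime with constants uniform in $(n,d)$ — gives $\inf_{\mathscr T_{\text{SR}}^{(b)}}\cR_\cP(\cdot,H_\rho)$ small and $\inf_{\mathscr T_{\text{SR}}^{(b)}}\cR_\cQ(\cdot,H_\rho)$ near $1$, contradicting the displayed inequality.

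The hard part will be the quantitative reconciliation in the second step: the per-server composition costs $O(m\,\mathfrak d(\cQ,\cP))$ in risk, so $m$ must remain bounded by a constant depending only on the $C$ of~\eqref{eq:d_n_condition_multinomial_lb}, whereas the Gaussian bandwidth penalty only becomes visible once $m\gtrsim d/b$; making these requirements compatible while keeping a constant-\emph{order} (not merely $1+o(1)$) gap on the ``Gaussian is easier'' side is delicate — crudely the gap there is only logarithmic unless $(m,b,\rho)$ and the alternative class $\cF$ are chosen with care — and it is precisely this that the form of the hypotheses~\eqref{eq:d_n_condition_multinomial_lb} is tuned to resolve. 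The other substantial ingredient is the matching multinomial $b$-bit impossibility bound in this range (the one certifying ``the multinomial has no solution''), which cannot be imported from Theorem~\ref{thm:multinomial_equivalence_bandwidth} since that result is valid only when $md\log d/\sqrt n\to0$, and which has to be established directly, together with the elementary ``transmit the raw sample'' upper bound that gives the matching $\rho_\cQ^2\asymp\sqrt d/(mn)$. Finally one checks that, since it is the Gaussian protocol that is being simulated, the inequality above is charged to $\mathfrak d(\cQ,\cP)$ rather than to the other summand of $\Delta(\cP,\cQ)$, so that the conclusion is the stated one-sided bound and not just non-equivalence.
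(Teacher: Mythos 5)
Your overall architecture — argue by contradiction, push a distributed protocol through the near-optimal simulation kernel server by server, pay $O(m\,\mathfrak d)$ in total variation, and then exhibit a bandwidth regime where the two $b$-bit distributed testing risks are separated by a constant — is exactly the paper's. But you have set up the separation in the wrong direction, and this is fatal. You need a triple $(b,m,\rho)$ at which the \emph{Gaussian} problem is solvable and the \emph{multinomial} problem is not. Under~\eqref{eq:d_n_condition_multinomial_lb} the only separation that is actually available (and the one the paper uses) is the opposite one: with $b=\lceil n\log_2(d)\rceil$ the raw multinomial sample $\tilde X^{(j)}\in\{1,\dots,d\}^n$ is itself a valid $b$-bit transcript, so the multinomial problem sits at the \emph{unconstrained} rate $\rho_\cQ^2\asymp\sqrt d/(mn)$, while for $mb\le d$ the Gaussian problem is pinned at $\rho_\cP^2\asymp\sqrt d/(\sqrt m\,n)$ by Theorem~\ref{thm:bandwidth_lb_finite_dim} (via Lemma~\ref{lem:bandwidth_summarization_lemma}); choosing $m$ a large absolute constant makes the window $C_\alpha\sqrt d/(mn)\le\rho^2\le c_\alpha\sqrt d/(\sqrt m\,n)$ nonempty, and the transfer inequality (run so that the Gaussian risk is bounded by the multinomial risk plus $2mc$, as in Lemma~\ref{cor:deficiency_testing_consequence_appendix}) yields the contradiction. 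Your plan instead requires certifying that ``the multinomial has no solution'' at a bandwidth where the Gaussian does — but at any $b\gtrsim n\log_2 d$ the multinomial is already at the unconstrained optimum (your own ``transmit the raw sample'' remark shows this), so it can never be harder than the Gaussian there, and for $b\ll n\log_2 d$ no bandwidth-constrained impossibility bound for the multinomial model with many observations per server exists; indeed the unavailability of such direct lower bounds is the stated motivation for the whole paper (Section~\ref{sec:difficulties_in_direct_analysis}). What you flag as an ingredient ``to be established directly'' is therefore not a routine step but an open problem, and your plan is internally inconsistent at the same $(b,m,\rho)$.

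The root cause is your (entirely reasonable) reading of $\mathfrak d(\cQ,\cP)$ as ``simulate Gaussian data from multinomial data,'' which forces the transfer inequality $\inf\cR_\cQ\le\inf\cR_\cP+2mc$ and hence forces you to look for ``Gaussian easy, multinomial hard.'' The paper's proof invokes the transfer in the reverse direction, $\inf_{\mathscr T_\cP}\cR_\cP\le\inf_{\mathscr T_\cQ}\cR_\cQ+2\varrho$ under $m\,\mathfrak d(\cQ,\cP)\le\varrho$ (Lemma~\ref{cor:deficiency_testing_consequence_appendix}), which is compatible with the available ``multinomial easy, Gaussian hard'' separation. If you rework the argument, take the paper's direction of the transfer, set $b=\lceil n\log_2(d)\rceil$, use~\eqref{eq:d_n_condition_multinomial_lb} to guarantee $mb\le d$ for a constant $m$, and verify (as in the proof of Theorem~\ref{thm:multinomial_equivalence_bandwidth}) that the Gaussian prior used in Lemma~\ref{lem:bandwidth_summarization_lemma} puts vanishing mass outside the relevant alternative — this is where $n\ge\sqrt d\log d$ enters, not through any sharpness of phase transitions.
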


The proof of the theorem is given in Section~\ref{sec:proofs_multinomial}. It leverages that there exist distributed, $b$-bit bandwidth constraint settings in which the (distributed) multinomial model allows for consistent goodness-of-fit testing, whereas the (distributed) Gaussian model does not. The result then readily follows from the distributed equivalence results derived in Section \ref{ssec:equiv_of_models_in_dist_setting}. The fact that the separation in the respective (distributed) testing risks occurs for a constant number of servers, yields that the two models are asymptotically nonequivalent whenever $\sqrt{d}/ \log^2(d) \geq d/n \gg 1$. This reasoning crucially exploits the differing minimax rates that occur under the bandwidth constraint, since without such a constraint, the same goodness-of-fit testing problem of~\eqref{eq:multinomial_uniformity_test_hypothesis} would have similar minimax performance for both of the models.

\section{Discussion}

We have derived minimax separation rates for uniformity testing in the distributed multinomial model under bandwidth and differential privacy constraints, in the large sample regime where $m d \log d / \sqrt{n} = o(1)$. When contrasted with existing results for large sample regimes, the minimax rates show that the large sample regime is subject to distinctly different phenomena. 

The applicability of our results is somewhat constrained by the requirement that $m d \log d / \sqrt{n} = o(1)$, which limits the range of model characteristics we can consider. Consequently, further work is needed to understand the behavior of the distributed multinomial model in other regimes. The non-equivalence result in Theorem~\ref{thm:multinomial_gaussian_le_cam_distance_lower_bound} indicates that the distributed multinomial model and the distributed Gaussian model are fundamentally different regarding distributed statistical decision problems when the sample size is small. Therefore, direct analysis of the distributed multinomial model might be necessary, requiring new techniques to derive minimax rates. We note, however, that this pertains to the specific Gaussian model formulated in~\eqref{eq:gaussian_equiv_model}, and there might be a different Gaussian model that is equivalent to the distributed multinomial model even in the small $n$ regime.

The results in this paper are derived through the notion of statistical equivalence, which is a powerful tool for establishing minimax rates in statistical decision theoretic problems. The results and techniques can be applied more generally to other distributed inference problems, and proving more general results concerning statistical equivalence and distributed inference is an interesting avenue for future research.

A downside of leveraging statistical equivalence is that it generally does not provide a direct path to obtain methods that are minimax rate optimal. However, Theorem 1 and Section 7 in~\cite{carter2002multinomial} provide a specific transformation that converts the local multinomial sample into a statistic approximately distributed as a Gaussian random vector. Such a transformation, combined with the rate optimal methods given in~\cite{szabo2023optimal} and~\cite{Cai2024FL-NP-Testing}, provide guidance to construct methods that attain the minimax rates described in this article.

\begin{ack}
    The author is grateful to the anonymous reviewers for their valuable feedback and suggestions and to Aad van der Vaart for his thorough reading of an earlier draft.
\end{ack}

\newpage
\bibliographystyle{acm}
\bibliography{references}

\newpage
\appendix

\section{Le Cam theory in distributed setting}\label{sec:distributed_le_cam_general}

We introduce some formal notions of Le Cam theory first in Section~\ref{ssec:basic_notions_of_le_cam}. Then, in Section~\ref{ssec:equiv_of_models_in_dist_setting}, we study the equivalence of models in the distributed setting. The theoretical developments presented in this section apply to general models denoted by $\mathcal{P}$ and $\mathcal{Q}$; although the main text specifically focuses on the Gaussian location model for $\mathcal{P}$ and the multinomial model for $\mathcal{Q}$, the machinery developed here is applicable to general statistical models.

\subsection{Preliminary notions of Le Cam theory}\label{ssec:basic_notions_of_le_cam}

A \emph{statistical experiment} is a set of probability distributions $\cP = \{ P_f : f \in \cF \}$ (a model) on a measurable space $(\cX,\mathscr{X})$ (the sample space). For the purpose of simplification, we shall consider only statistical experiments with Polish sample spaces and corresponding Borel sigma-algebras. Furthermore, we shall only consider dominated models, meaning that there exists a sigma-finite measure $\mu$ such that $P_f \ll \mu$ for all $f \in \cF$. In a slight abuse of terminology, we shall sometimes refer to $\cP$ as the experiment, suppressing the presence of the sample space and indexing set. 

Given another statistical experiment with model $\cQ = \{ Q_f : f \in \cF\}$ indexed by the same set $\cF$ and sample space $(\tilde{\cX},\tilde{\mathscr{X}})$, we define the \emph{deficiency of $\cP$ with respect to $\cQ$} as
\begin{equation}\label{eq:supp:def_defiency}
\mathfrak{d}(\cP;\cQ) = \underset{C}{\inf} \, \underset{f \in \cF}{\sup} \, \| P_f C - Q_f \|_{\mathrm{TV}}.
\end{equation}
where the infimum is taken over all Markov kernels $C: \tilde{\mathscr{X}} \times {\cX} \to [0,1]$ and the probability measure $P_f C : \tilde{\mathscr{X}} \to [0,1]$ is understood as
\begin{equation}\label{eq:markov_kernel_Pf}
P_f C (A) := \int_{x \in \cX} C(A|x) dP_f(x).
\end{equation}
This is equivalent to the more general notion of deficiency of~\cite{le_cam_1964} for dominated models on Polish spaces (see Proposition 9.2 in~\cite{nussbaum_1996}).

The deficiency $\mathfrak{d}(\cP;\cQ)$ quantifies the degree to which $\cQ$ can be approximated by an experiment $\cP$. If $\mathfrak{d}(\cP;\cQ) \leq \varrho$, it implies that for bounded loss functions, each decision procedure within $\cQ$ has an associated procedure in $\cP$ that achieves nearly the same risk, up to a multiple of $\varrho$. 

To make this precise, let $\cF$ be a measurable space and consider a function $\ell: \cF \times \cD \to [0,1]$ on a measurable space $(\cD,\mathscr{D})$, such that $t \mapsto \ell(f,t)$ is measurable for all $f \in \cF$, which we shall refer to a \emph{loss functions}. We shall consider a \emph{decision procedure} for $(\cQ,\cD)$ to be a Markov kernel $D : \mathscr{D} \times \tilde{\cX} \to [0,1]$. If $\mathfrak{d}(\cP;\cQ) \leq \varrho$, there exists $C: \tilde{\mathscr{X}} \times {\cX} \to [0,1]$ such that for all decision procedures $D$ for $(\cQ,\cD)$ we have that
\begin{equation*}
\int  \ell(f, \varphi) dP_f C D(\varphi) \leq \int \ell(f, \varphi) dQ_f D(\varphi) + 2 \varrho, \quad \text{ for all } f \in \cF.
\end{equation*}
Here, the Markov kernel $Q_f D$ is to be understood in the sense of~\eqref{eq:markov_kernel_Pf} and $CD : \mathscr{D}\times \cX \to [0,1]$ as
\begin{equation*}
CD(A|x) = \int D(A|\tilde{x}) dC(\tilde{x}|x).
\end{equation*}

There is also the following reverse implication; suppose that there exists a loss function $\ell: \cF \times \cD \to [0,1]$ on a measurable space $(\cD,\mathscr{D})$, and  
\begin{equation*}
\underset{C}{\inf} \; \underset{D}{\inf} \; \underset{f \in \cF}{\sup} \, \left| \int \ell(f, \varphi) dQ_f D(\varphi) - \, \int \ell(f, \varphi)dP_f C D(\varphi) \right| > 2 \varrho,
\end{equation*}
where the two infimums are over all decision procedures $D$ and Markov kernels $C: \tilde{\mathscr{X}} \times {\cX} \to [0,1]$. Then, $\mathfrak{d}(\cQ,\cP) > \varrho$. This follows immediately from e.g.\ Lemma~\ref{lem:TV_distance_L1_rep} in the appendix, since $x \mapsto \int \ell(f,\varphi)dD(\varphi|x)$ is measurable. In the more extensive framework considered in e.g.~\cite{le_cam_1964}, such a reverse implication for risk functions fully characterizes the deficiency between two models, but this framework is not needed in what follows. 

Le Cam's deficiency distance between $\cP$ and $\cQ$ is then defined as
\begin{equation*}
\Delta (\cP,\cQ) = \max \left\{ \mathfrak{d}(\cP;\cQ), \mathfrak{d}(\cQ,\cP) \right\}.
\end{equation*}
This semi-metric becomes a metric whenever $\cP$ and $\cQ$ are identified whenever $\mathfrak{d}(\cP;\cQ) + \mathfrak{d}(\cQ,\cP) = 0$. Two sequences of experiments $\cP_\nu$ and $\cQ_\nu$ are called \emph{asymptotically equivalent} if their difference $\Delta(\cP_\nu,\cQ_\nu)$ tends to zero as $\nu$ approaches infinity. Conversely, such sequences shall be called \emph{asymptotically nonequivalent} if $\Delta(\cP_\nu,\cQ_\nu) > c$ as $\nu \to \infty$ for a fixed constant $c>0$.
 
The final notion we shall recall is that of sufficiency. A statistic $S: \cX \to \tilde{\cX}$  is \emph{sufficient for the model $\cP$} if for any $A \in {\mathscr{X}}$ there exists a measurable map $\psi_A:\tilde{\cX} \to \R$ such that 
\begin{equation*}
P_f \left( A \cap S^{-1}(B) \right) = \int_B \psi_A(\tilde{x}) dP_f^S(\tilde{x}) \; \text{ for all } B \in \tilde{\mathscr{X}} \, \text{ and } f \in \cF.
\end{equation*}
Here, the measure $P_f^S$ is to be understood as the push-forward measure $P_f^S(B) = P_f(S^{-1}(B))$. A sufficient statistic allows for transforming observations from one model to another, ``sufficient'' model which is equivalent in the sense of Le Cam distance. That is, if $S$ is a sufficient statistic for $\cP$, then the model $\cP' := \{ P_f^S : f \in \cF \}$ satisfies $\Delta( \cP, \cP') = 0$.

The next lemma is the Neyman-Fisher factorization theorem gives a useful characterization of sufficiency of a statistic for models that admit densities with respect to the same dominating measure.  
\begin{lemma}\label{lem:neyman-fisher_factorization}
Suppose that $P_f \ll \mu$ for all $P_f \in \cP$ with $\mu$ a sigma-finite measure. A statistic $S: \cX \to \tilde{X}$ is sufficient for $\cP$ if and only if there exists measurable functions $g_f: \R \to \R$ and $h: \cX \to \R$ such that
\begin{equation}\label{eq:neyman-fisher_factorization}
\frac{dP_f}{d \mu} (x) = g_f(S(x)) h(x) \; \text{ for almost every } x \in \cX \text{ and every } f \in \cF.
\end{equation}
\end{lemma}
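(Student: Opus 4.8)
The plan is to prove Lemma~\ref{lem:neyman-fisher_factorization}, the Neyman--Fisher factorization theorem, in the classical way adapted to the dominated-model setting allowed in the excerpt. The key simplification is that although $\cF$ may be an arbitrary index set, the model $\cP=\{P_f:f\in\cF\}$ is \emph{dominated} by a single $\sigma$-finite measure $\mu$; by a standard argument one may replace $\mu$ by a countable convex combination $\mu_0=\sum_k c_k P_{f_k}$ (a \emph{privileged} dominating measure, in the sense of Halmos--Savage), which is itself a probability measure equivalent to the whole family in the sense that $P_f\ll\mu_0$ for all $f$ and $\mu_0$-null sets are exactly the sets that are $P_f$-null for all $f$. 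Passing to $\mu_0$ loses no generality because the densities $dP_f/d\mu$ and $dP_f/d\mu_0$ differ only by the common factor $d\mu_0/d\mu$, which can be absorbed into $h$. So the first step is to reduce to the case where the dominating measure is this privileged probability measure $\mu_0$.

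The second step is the easy direction. Assume the factorization $dP_f/d\mu_0(x)=g_f(S(x))h(x)$ holds. For $A\in\mathscr{X}$ and $B\in\tilde{\mathscr{X}}$ one computes
\begin{equation*}
P_f\big(A\cap S^{-1}(B)\big)=\int_{A\cap S^{-1}(B)} g_f(S(x))h(x)\,d\mu_0(x),
\end{equation*}
and the plan is to exhibit $\psi_A$ via a conditional-expectation argument: let $\psi_A(\tilde{x})$ be a version of $E_{\mu_0}\big[\mathbbm{1}_A h \mid S=\tilde x\big]$ divided appropriately, or more directly, define the measure $\lambda_A(B):=\int_{A\cap S^{-1}(B)}h\,d\mu_0$ on $\tilde{\mathscr{X}}$, note $\lambda_A\ll\mu_0^S$, and set $\psi_A=d\lambda_A/d\mu_0^S$. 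Then $P_f(A\cap S^{-1}(B))=\int_B g_f\,d\lambda_A=\int_B g_f\psi_A\,d\mu_0^S=\int_B\psi_A\,dP_f^S$, where the last equality uses that $g_f=dP_f^S/d\mu_0^S$ (which follows from the factorization by integrating out). This exhibits the required $\psi_A$, so $S$ is sufficient.

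The third step, the converse, is where the real work lies. Assume $S$ is sufficient, so for each $A$ there is $\psi_A$ with $P_f(A\cap S^{-1}(B))=\int_B\psi_A\,dP_f^S$ for all $f,B$. Applying this with the privileged measure $\mu_0=\sum_k c_kP_{f_k}$, one gets $\mu_0(A\cap S^{-1}(B))=\int_B\psi_A\,d\mu_0^S$, i.e.\ the \emph{same} $\psi_A$ works for $\mu_0$; this says $\psi_A(S(\cdot))$ is a version of the $\mu_0$-conditional expectation $E_{\mu_0}[\mathbbm{1}_A\mid S]$, hence the map $A\mapsto\psi_A\circ S$ extends to $E_{\mu_0}[\,\cdot\mid\sigma(S)]$. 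Now set $h:=d\mu_0/d\mu$ (Radon--Nikodym derivative, which exists since $\mu_0\ll\mu$) and $g_f:=dP_f^S/d\mu_0^S$, a measurable function on $\tilde{\cX}$. The claim is $dP_f/d\mu(x)=g_f(S(x))h(x)$ a.e. To prove it, it suffices to check $dP_f/d\mu_0(x)=g_f(S(x))$ $\mu_0$-a.e.\ (then multiply by $h$). For any $A\in\mathscr{X}$,
\begin{equation*}
P_f(A)=\int \mathbbm{1}_A\,dP_f=\int E_{\mu_0}[\mathbbm{1}_A\mid S]\,dP_f
\end{equation*}
— here one must justify that integrating $\mathbbm{1}_A$ against $P_f$ equals integrating its $\mu_0$-conditional expectation against $P_f$, which is exactly the content of sufficiency (the conditional expectation does not depend on $f$) — and this equals $\int \psi_A(S(x))\,dP_f(x)=\int\psi_A\,dP_f^S=\int\psi_A g_f\,d\mu_0^S=\int\psi_A(S(x))g_f(S(x))\,d\mu_0(x)=\int\mathbbm{1}_A g_f(S(x))\,d\mu_0(x)$, using the defining property of $\psi_A$ backwards on $\mu_0$ in the last step. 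Since this holds for all $A$, $dP_f/d\mu_0=g_f\circ S$ $\mu_0$-a.e., completing the proof.

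The main obstacle is the converse direction, specifically the step asserting that sufficiency lets one replace $\int\mathbbm{1}_A\,dP_f$ by $\int E_{\mu_0}[\mathbbm{1}_A\mid S]\,dP_f$ — that is, that the conditional expectation computed under $\mu_0$ serves simultaneously for every $P_f$. This requires the Halmos--Savage lemma (existence of the privileged dominating measure $\mu_0$ as a countable convex combination from within the family) and a careful $\pi$--$\lambda$ / monotone-class argument to pass from indicators $\mathbbm{1}_A$ to the identity on $L^1$. I would either cite Halmos--Savage (or Lehmann--Romano, Theorem on sufficiency) for this classical fact, or include the short construction of $\mu_0$ and the verification that $P_f$-a.s.\ equality of conditional expectations follows from $\mu_0$-a.s.\ equality since $P_f\ll\mu_0$ with the Radon--Nikodym derivative $dP_f/d\mu_0$ being itself $\sigma(S)$-measurable as just shown.
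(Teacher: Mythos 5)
The paper does not actually prove this lemma: it is the classical Neyman--Fisher factorization theorem, and the text simply points to Chapter 5 of Le Cam's book. Your proposal supplies the standard Halmos--Savage proof that such a reference contains: reduce to a privileged dominating probability measure $\mu_0=\sum_k c_kP_{f_k}$, and in the converse direction identify $\psi_A\circ S$ with a version of $E_{\mu_0}[\mathbbm{1}_A\mid\sigma(S)]$ and set $g_f=dP_f^S/d\mu_0^S$, $h=d\mu_0/d\mu$. That direction of your argument is correct and essentially complete; the closing worry about a $\pi$--$\lambda$/monotone-class step is unnecessary, since the identity $P_f(A)=\int_A g_f(S(x))\,d\mu_0(x)$ for \emph{all} $A\in\mathscr{X}$ already pins down the Radon--Nikodym derivative $dP_f/d\mu_0$.

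There is one step in the easy direction that fails as written. From the factorization $dP_f/d\mu_0=g_f(S)\,h$ you assert $g_f=dP_f^S/d\mu_0^S$ ``by integrating out.'' Integrating out in fact gives $dP_f^S/d\mu_0^S=g_f\cdot\kappa$, where $\kappa=d\lambda_{\cX}/d\mu_0^S$ and $\lambda_{\cX}(B)=\int_{S^{-1}(B)}h\,d\mu_0$; one has $\kappa\equiv1$ only if $h$ happens to be normalized so that its conditional expectation given $S$ under $\mu_0$ equals one. Consequently your $\psi_A=d\lambda_A/d\mu_0^S$ does not satisfy $\int_B g_f\psi_A\,d\mu_0^S=\int_B\psi_A\,dP_f^S$ in general; a two-point space with $S$ constant, $g_f\equiv 1/2$ and $h$ taking the values $4$ and $0$ already breaks the claimed equality. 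The repair is immediate: since $\lambda_A\le\lambda_{\cX}$ as measures, define instead $\psi_A:=d\lambda_A/d\lambda_{\cX}$; then $dP_f^S=g_f\,d\lambda_{\cX}$, so $\int_B\psi_A\,dP_f^S=\int_B g_f\,d\lambda_A=P_f\bigl(A\cap S^{-1}(B)\bigr)$ as required. With that one-line correction your proof is the standard argument and is sound.
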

A proof for both the lemma and the last statement of the previous paragraph can be found in Chapter 5 of~\cite{le_cam_2012asymptotic}.

\subsection{Equivalence of distributed decision problems}\label{ssec:equiv_of_models_in_dist_setting}

We now turn to the distributed setting considered in the paper, where $j=1,\dots,m$ servers each receive data $X^{(j)}$ drawn from a distribution $P_f$ and sample space $(\cX,\mathscr{X})$. Each of the servers communicates a transcript based on the data to a central server, which based on the aggregated transcripts computes its solution to the decision problem at hand.

The tools developed in this section apply to wider range of distributed architectures than the one considered in the main text of the paper, as introduced in Section~\ref{sec:problem_formulation}. The framework introduced here accommodates various forms of interaction between servers, including sequential and blackboard protocols (see e.g.~\cite{interactive_acharya,barnes2019fisher}). In contrast, the main text focuses on servers that either do not communicate (local randomness protocols) or utilize a shared randomness source (a special case of sequential or blackboard communication). 

A \emph{distributed protocol for the experiment $\cP$ with decision space $(\cD,\mathscr{D})$} consists of a triplet  $\{ D , \cK, (\cU,\mathscr{U},\P^U)\}$, a Markov kernel $D: \mathscr{D} \times \bigotimes_{j=1}^m \cY^{(j)} \to [0,1]$ and a probability space $(\cU,\mathscr{U},\P^U)$, and $\cK$ is a collection of Markov kernels. 

To unpack all this notation: the Markov kernel $D$ takes the role of the decision procedure, where the decision is to made on the basis of the transcripts generated by the Markov kernels $\cK$. The transcripts are in turn generated based on the data and a source of shared randomness independent of the data. The probability space $(\cU,\mathscr{U},\P^U)$ plays the role of the source of randomness that is shared by the servers. The distributed protocol is said to have \emph{no access to shared randomness} or to be \emph{a local randomness protocol} if $\mathscr{U}$ is the trivial sigma-algebra.

In this section, we shall consider three types of communication architectures:

\begin{itemize}
    \item \emph{One shot protocols}: $\cK = {\{K^j\}}_{j=1,\dots,m}$ where $K^j : \mathscr{Y}^{(j)} \times (\cX \times \mathcal{U}) \to [0,1]$. These protocols are what are considered in the main text of the paper.
    \item \emph{Sequential protocols}: $\cK = {\{K^j\}}_{j=1,\dots,m}$ where $K^j : \mathscr{Y}^{(j)} \times (\cX \times \mathcal{U} \times \cY^{(1)} \times \cdots \times \cY^{(j-1)}) \to [0,1]$. That is, the transcript generated by server $j$ is based on the data, the shared randomness and the transcripts of the previous servers.
    \item \emph{Blackboard protocols}: \item \emph{Blackboard protocols}: $\cK = {\{K^j_t\}}_{j=1,\dots,m,t=1,\dots,T}$ where $K^j_1 : \mathscr{Y}^{(j)} \times (\cX \times \mathcal{U}) \to [0,1]$ and
    \[
    K^j_t : \mathscr{Y}^{(j)} \times \left( \cX \times \mathcal{U} \times {(\cY^{(1)} \times \cdots \times \cY^{(m)})}^{\otimes (t-1)} \right) \to [0,1],
    \]
    for $t=2,\dots,T$. That is, the transcript generated by server $j$ is based on the data, the shared randomness, and the transcripts of \emph{all} the servers from the previous round.
\end{itemize}

For one shot protocols, we have in terms of random variables that $X^{(j)}\sim P_f$, $U \sim \P^U$, $Y^{(j)}|(X^{(j)},U) \sim K^j(\cdot|X^{(j)},U)$ for $j=1,\dots,m$ and $\varphi \sim D(\cdot|Y)$ with $Y = (Y^{(1)},\dots,Y^{(m)})$. This gives rise to a Markov chain
\begin{equation} \label{def:dist:decision:MC}
 \begin{matrix}
&\quad X^{(1)} &\put(0,5){\vector(1,0){20}}  &\qquad Y^{(1)}|U\quad&\put(-10,4){\vector(3,-1){20}} \\
&\quad\vdots &\put(0,4){\vector(1,0){20}}  &\qquad\vdots  \quad&\put(-10,4){\vector(1,0){20}} \\
&\quad X^{(m)} & \put(0,5){\vector(1,0){20}}  &\qquad Y^{(m)}|U\quad&\put(-10,5){\vector(3,1){20}}
\end{matrix}  \qquad
 \varphi.
\end{equation}
For $x = (x^{(1)},\dots,x^{(m)}) \in \cX^m$, $u \in \cU$ and ${\{K^j\}}_{j=1,\dots,m}$, let $x\mapsto K(A|x)$ be the Markov kernel product distribution $\bigotimes_{j=1}^m K^j(\cdot|x^{(j)},u)$. Given a distributed protocol and i.i.d.\ data from $P_f$ we shall use $\P_f$ to denote the joint distribution the data $X \sim P_f^m$, the shared randomness $U \sim \P^U$ and $Y = (Y^{(1)},\dots,Y^{(m)})$ with $Y|(X,U) \sim K(Y|X,U)$. We have that $P^m_f K = \bigotimes_{j=1}^m P_f K^j$ and the push-forward measure of $Y$ then disintegrates as
\begin{equation}\label{eq:push-forward-Y-lecamchap}
\P^Y_f(A) = P_f^m \P^U K(A) = \P^U P_f^m K (A) = \int d\underset{j=1}{\overset{m}{\bigotimes}}  P_f K^j(\cdot|X^{(j)},u)(A) d\P^U(u),
\end{equation}
where the second equality follows from the independence of $U$ with the data $X:=(X^{(1)},\dots,X^{(m)})$ drawn from $P_f$. 

For sequential protocols, the push-forward measure of $Y$ instead disintegrates as
\begin{equation}\label{eq:push-forward-Y-disintegration-sequential}
    \P^Y_f(A) = \int_{\mathcal{U}}  \left[ \int \cdots \int \mathbbm{1}_A(y) dP_f K^{m}\left( y^m \mid X^{(m)}, u, (y)_{j=1}^{m-1} \right) \cdots dP_fK^1(y^1 \mid X^{(1)},u) \right] d\P^U(u),
\end{equation}
For blackboard protocols, a similar disintegration applies for each of the rounds.

A one shot or sequential distributed protocol is said to satisfy a \emph{$b$-bit bandwidth constraint} if its kernels ${\{K^j\}}_{j=1,\dots,m}$ are defined on spaces satisfying $|\cY^{(j)}| \leq 2^b$. For blackboard protocols, various bandwidth constraints can be imposed, such as a $b$-bit bandwidth constraint for each round $t=1,\dots,T$. 

Given Markov Kernels $C^j: {\mathscr{X}} \times \tilde{\cX} \to [0,1]$, $j=1,\dots,m$, a distributed one shot protocol $\{ D , {\{K^j\}}_{j=1,\dots,m}, (\cU,\mathscr{U},\P^U)\}$ for the model $\cP$, yields a distributed protocol for the model $\cQ$: $\{ D , \{C K^j\}_{j=1,\dots,m}, (\cU,\mathscr{U},\P^U)\}$. If ${\{K^j\}}_{j=1,\dots,m}$ is $b$-bit bandwidth constraint, the collection of kernels $\{C^j K^j\}_{j=1,\dots,m}$ do so too, as each $C^j K^j$ is defined on $\mathscr{Y}^{(j)} \times \tilde{\cX}$.

Similarly, for a sequential protocol, the Markov kernels $C^j$ and $K^j$ yield a distributed sequential protocol for the model $\cQ$ with kernels $\{C^j K^j\}_{j=1,\dots,m}$. If each $K^j$ is $b$-bit bandwidth constraint, so is each $C^j K^j$. For blackboard protocols, the same reasoning applies to each round $t=1,\dots,T$. That is, type of protocol defined by the kernels $\cK$ is ``closed under composition'' with kernels $C^{j}$ between $\tilde{\cX}$ with target space $\cX$, where bandwidth constraints are preserved.

 We shall consider the notion of {local $\epsilon$-differential privacy} of Definition~\ref{def:distributed_DP_protocol}. A Markov kernel $K:\mathscr{Y} \times \cX \to [0,1]$ is called \emph{locally $(\epsilon,\delta)$-differentially private} if
\begin{equation}\label{eq:ch6:local_DP}
K(A|x) \leq e^\epsilon K(A|x') + \delta \; \text{ for all } A \in \mathscr{Y} \text{ and } x,x' \in \cX.
\end{equation}
Since the definition of differential privacy depends heavily on what one defines as the sample space, it is difficult to obtain a similar ``transfer of distributed protocols'' that respects the $(\epsilon,\delta)$-differential privacy constraint, hence the choice to consider local constraints only. 

A one shot or sequential distributed protocol shall be called locally $\epsilon$-differentially private if~\eqref{eq:ch6:local_DP} holds for each $K^j$; $j=1,\dots,m$. For blackboard protocols, one can impose a $(\epsilon,\delta)$-differential privacy constraint for each round $t=1,\dots,T$, or for the entire output over $t=1,\dots,T$ rounds. The following lemma shows that local $\epsilon$-differential privacy, just like bandwidth constraints, carry over from one model to the other.

\begin{lemma}\label{lem:comm_constraints_carry_over}
Let $(\cX,\mathscr{X})$ and $(\tilde{\cX},\tilde{\mathscr{X}})$ be measurable spaces and consider Markov kernels $C:\tilde{\mathscr{X}} \times \cX \to [0,1]$ and $K:\mathscr{Y} \times \cX \to [0,1]$. If $K$ is $b$-bit bandwidth constraint, so is the Markov kernel $CK : \mathscr{Y} \times \tilde{\cX} \to [0,1]$. If $K$ is locally $\epsilon$-differentially private, so is $CK$. Furthermore, for any collection of Markov kernels $\cK$, the same reasoning applies to the collection $\{CK\}_{K \in \cK}$, preserving bandwidth constraints and local differential privacy, as well as the protocol's architecture.
\end{lemma}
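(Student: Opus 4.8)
The plan is to verify the two preservation claims directly from the relevant definitions, kernel by kernel, using that the composed kernel $CK$ acts by $CK(A\,|\,\tilde x)=\int_{\cX} K(A\,|\,x)\,dC(x\,|\,\tilde x)$ for $A\in\mathscr{Y}$ (one draws a pseudo-observation in $\cX$ from $C(\cdot\,|\,\tilde x)$ and feeds it to $K$), and then to observe that neither argument touches the conditioning structure distinguishing one-shot, sequential and blackboard protocols. The bandwidth claim is immediate: $CK$ is a Markov kernel whose \emph{target} space is the same $(\cY,\mathscr{Y})$ as that of $K$ --- composition with $C$ only changes the source space from $(\cX,\mathscr{X})$ to $(\tilde\cX,\tilde{\mathscr{X}})$ --- so $|\cY|\le 2^b$ is inherited verbatim and $CK$ satisfies the constraint of Definition~\ref{def:b-bit_bandwidth_protocol}. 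This step is pure bookkeeping.

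For the local privacy claim, I would fix $A\in\mathscr{Y}$ and $\tilde x,\tilde x'\in\tilde\cX$. Since $C(\cdot\,|\,\tilde x')$ is a probability measure on $\cX$, one may write
\begin{equation*}
CK(A\,|\,\tilde x)=\int_{\cX}\!\int_{\cX}K(A\,|\,x)\,dC(x'\,|\,\tilde x')\,dC(x\,|\,\tilde x),
\end{equation*}
then apply the pointwise inequality $K(A\,|\,x)\le e^{\epsilon}K(A\,|\,x')+\delta$ of~\eqref{eq:ch6:local_DP} inside the double integral and integrate out using $\int dC(x\,|\,\tilde x)=1$; this yields $CK(A\,|\,\tilde x)\le e^{\epsilon}CK(A\,|\,\tilde x')+\delta$, and since $A,\tilde x,\tilde x'$ were arbitrary, $CK$ is locally $(\epsilon,\delta)$-private in the sense of~\eqref{eq:ch6:local_DP} (the pure $\epsilon$-DP case $\delta=0$ is the identical computation). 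The only point requiring care is the interchange of integrals, which is justified by Tonelli's theorem: the integrand $(x,x')\mapsto K(A\,|\,x)$ is nonnegative and measurable (it does not depend on $x'$) and is integrated against the product of the probability measures $C(\cdot\,|\,\tilde x)$ and $C(\cdot\,|\,\tilde x')$.

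For the final sentence, about collections $\cK$ and the three architectures, I would point out that the two computations above are entirely local in the input: they treat every conditioning argument of a kernel $K^j$ other than the local data --- namely the shared randomness $u$, and in the sequential and blackboard cases the previously posted transcripts --- as a frozen parameter, while composition with $C^j$ acts only on the data coordinate and leaves those arguments untouched. Hence $C^jK^j$ is again a kernel of exactly the same conditioning type (one-shot, round-$t$ sequential, or round-$t$ blackboard) with unchanged target space, and the bandwidth and privacy inequalities for $C^jK^j$ follow from the two steps above applied with the non-data arguments held fixed, consistently with the per-kernel (resp.\ per-round) formulation of the constraints given above. Running this over every $K\in\cK$ yields the statement. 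I do not anticipate a genuine obstacle here, as the lemma is a structural fact about kernel composition; the only mild subtleties are the Tonelli step and phrasing the architecture claim so that the constraints are read per kernel / per round with the non-data conditioning variables held fixed.
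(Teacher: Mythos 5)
Your proposal is correct and follows essentially the same route as the paper's proof: the bandwidth claim is the observation that composition with $C$ leaves the target space of $K$ unchanged, and the privacy claim is established by inserting the probability measure $C(\cdot\,|\,\tilde x')$ as a factor of one, applying the pointwise inequality~\eqref{eq:ch6:local_DP} inside the double integral, and noting that the argument is pointwise in all non-data conditioning variables so it transfers to each architecture. Your added remark on Tonelli is a harmless extra precaution; no gaps.
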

\begin{proof}
The first statement has been remarked on earlier in the section. For the second statement, consider arbitrary $\tilde{x},\tilde{x}' \in \tilde{\cX}$ and $A \in \mathscr{Y}$. Using that $C$ is a Markov kernel and applying~\eqref{eq:ch6:local_DP} to $K$ yields
\begin{align*}
CK(A|\tilde{x}) &= \int K(A|x)dC(x|\tilde{x}) = \int \int K(A|x)dC(x|\tilde{x}) dC(x'|\tilde{x}') \\
                &\leq  e^\epsilon \int  K(A|x') dC(x'|\tilde{x}') + \delta =  e^\epsilon CK(A|\tilde{x}') + \delta,
\end{align*}
which shows $CK$ is locally $(\epsilon,\delta)$-differentially private. The above argument applies pointwise for all other conditional arguments in the Markov kernel, hence the same reasoning applies to shared randomness, sequential and blackboard protocols.
\end{proof}

In an abuse of notation, let $D$ denote the entire distributed protocol (triplet)
\[  
    \{ D , {\{K^j\}}_{j=1,\dots,m}, (\cU,\mathscr{U},\P^U)\}
\] 
for the experiment $\cP$ (indexed by $\cF$) with decision space $(\cD,\mathscr{D})$. Given $D$ and a loss function $\ell: \cF \times \cD \to [-1,1]$, we define the \emph{distributed risk of $D$ in $\cP$ for $\ell$} as
\begin{equation*}
\cR_\cP(D,\ell) := \underset{f \in \cF}{\sup} \, \int \int \int \, \ell(f,\varphi) dD(\varphi|y) \, d\underset{j=1}{\overset{m}{\bigotimes}}  P_f K^j(\cdot|X^{(j)},u)(y) d\P^U(u).
\end{equation*}

We are now ready to formulate a straightforward consequence for the distributed risk, following from models being close in Le Cam distance. This finding, formulated in Lemma~\ref{thm:le_cam_dist_general}, shall serve as one of the main tools for deriving the main results. It states roughly that, whenever there is a $b$-bit bandwidth constrained distributed protocol that achieves a certain risk is one model and there is small deficiency with the other model relative to the number of servers, there exists a $b$-bit distributed protocol that achieves comparable risk for the other model. A similar statement holds under local differential privacy constraints. If there is a locally $(\epsilon,\delta)$-differentially private distributed procedure in the one model and there is small deficiency with another model, it means that there is comparable risk for the privacy constraint distributed decision problem.

\begin{lemma}\label{thm:le_cam_dist_general}
Let $m\in \N$. Consider two experiments $\cP$ and $\cQ$ with indexing set $\cF$, satisfying $m \mathfrak{d}(\cQ;\cP) \leq \varrho$ for some $\varrho > 0$. Let $\mathscr{J}_\cP$ and $\mathscr{J}_\cQ$ denote the class of $b$-bit bandwidth constraint shared randomness protocols for the models $\cP$ and $\cQ$ respectively. 

Then, for any loss function $\ell: \cF \times \cD \to [0,1]$,
\begin{equation*}
 \underset{D \in \mathscr{J}_\cQ}{\inf} \, \cR_\cQ(D,\ell) - \underset{D \in \mathscr{J}_\cP}{\inf} \, \cR_\cP(D,\ell)  \leq \varrho.
\end{equation*}
where in the infimum, in an abuse of notation, $D$ denotes the entire distributed protocol triplet $\{ D , {\{K^j\}}_{j=1,\dots,m}, (\cU,\mathscr{U},\P^U)\}$.

The same statement is holds for $\mathscr{J}_\cP$ and $\mathscr{J}_\cQ$ denoting either classes of $b$-bit bandwidth constraint local randomness, sequential protocols, or any of these distributed protocols satisfying local $(\epsilon,\delta)$-differential privacy constraints, for the respective models $\cP$ and $\cQ$. If $T m \mathfrak{d}(\cQ;\cP) \leq \varrho$, the same statement holds for blackboard protocols with $T$ rounds.
\end{lemma}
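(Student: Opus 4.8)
\textbf{Proof proposal for Lemma~\ref{thm:le_cam_dist_general}.}
The plan is to exploit the compositional structure observed in Lemma~\ref{lem:comm_constraints_carry_over}: given a near-optimal distributed protocol for the model $\cP$, one transports it to a distributed protocol for $\cQ$ by pre-composing each transcript kernel $K^j$ with the deficiency kernel $C^j$ (the Markov kernel nearly witnessing $\mathfrak{d}(\cQ;\cP)$), and controls the resulting change in distributed risk by a single application of the triangle inequality in total variation, paid for $m$ times — once per server. Concretely, fix $\eta>0$ and pick a protocol $D=\{D,\{K^j\},(\cU,\mathscr{U},\P^U)\}\in\mathscr{J}_\cP$ with $\cR_\cP(D,\ell)\leq \inf_{D'\in\mathscr{J}_\cP}\cR_\cP(D',\ell)+\eta$, and a single Markov kernel $C:\tilde{\mathscr{X}}\times\cX\to[0,1]$ (one may take $C^j=C$ for all $j$) with $\sup_{f\in\cF}\|P_f C - Q_f\|_{\mathrm{TV}}\leq \mathfrak{d}(\cQ;\cP)+\eta/m$. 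By Lemma~\ref{lem:comm_constraints_carry_over}, the protocol $\tilde D:=\{D,\{C K^j\}_{j=1,\dots,m},(\cU,\mathscr{U},\P^U)\}$ lies in $\mathscr{J}_\cQ$ — it inherits the $b$-bit bandwidth constraint, or the local $(\epsilon,\delta)$-differential privacy constraint, and the architecture (one-shot, local-randomness, or sequential).

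The core step is the bound
\[
\left| \cR_\cQ(\tilde D,\ell) - \cR_\cP(D,\ell) \right| \;\leq\; m\,\sup_{f\in\cF}\|P_f C - Q_f\|_{\mathrm{TV}} \;\leq\; \varrho + \eta.
\]
To see this, fix $f$ and compare the two push-forward laws of the aggregated transcript $Y=(Y^{(1)},\dots,Y^{(m)})$: under $\tilde D$ it is $\bigotimes_{j=1}^m (Q_f C K^j)(\cdot\mid u)$ (integrated over $u\sim\P^U$), while under $D$ it is $\bigotimes_{j=1}^m (P_f C K^j)(\cdot\mid u)$ — wait, more precisely, under $D$ it is $\bigotimes_{j=1}^m (P_f K^j)(\cdot\mid u)$, whereas under $\tilde D$ it is $\bigotimes_{j=1}^m (Q_f C K^j)(\cdot\mid u)$; the point is that $Q_f C K^j$ should be close to $P_f K^j$ because $Q_f C$ is close to $P_f$. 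Since total variation distance of product measures is subadditive over coordinates, $\|\bigotimes_j (Q_f C)K^j - \bigotimes_j P_f K^j\|_{\mathrm{TV}} \leq \sum_{j=1}^m \|(Q_f C)K^j - P_f K^j\|_{\mathrm{TV}} \leq \sum_{j=1}^m \|Q_f C - P_f\|_{\mathrm{TV}} = m\|Q_f C - P_f\|_{\mathrm{TV}}$, using that applying a Markov kernel is a contraction for total variation (the data-processing inequality). Because $\ell$ takes values in $[0,1]$, integrating $\varphi\mapsto\int\ell(f,\varphi)dD(\varphi|y)$ — itself a $[0,1]$-valued measurable function of $y$ — against these two laws differs by at most this total variation bound; averaging over $u\sim\P^U$ and then taking $\sup_{f\in\cF}$ preserves the estimate. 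Chaining, $\inf_{D'\in\mathscr{J}_\cQ}\cR_\cQ(D',\ell)\leq \cR_\cQ(\tilde D,\ell)\leq \cR_\cP(D,\ell)+\varrho+\eta\leq \inf_{D'\in\mathscr{J}_\cP}\cR_\cP(D',\ell)+\varrho+2\eta$, and letting $\eta\downarrow 0$ gives the claim.

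For sequential protocols the same argument goes through with the only change being that the push-forward of $Y$ disintegrates as in~\eqref{eq:push-forward-Y-disintegration-sequential} rather than as a product; one applies the total-variation contraction one server at a time, peeling off the outermost integral $dP_f K^m(\cdot\mid X^{(m)},u,(y)_{j<m})$ versus $d(Q_f C)K^m(\cdot\mid\cdots)$ and bounding each replacement by $\|Q_f C - P_f\|_{\mathrm{TV}}$, so the telescoping sum again contributes at most $m\|Q_f C-P_f\|_{\mathrm{TV}}$; the conditioning on earlier transcripts is harmless since the bound on $\|Q_f C(\cdot\mid x)-\delta\text{-free}\|$ — i.e.\ on the kernel $C$ — is uniform in all conditioning arguments, exactly as exploited in the proof of Lemma~\ref{lem:comm_constraints_carry_over}. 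For blackboard protocols with $T$ rounds, each round contributes an $m\mathfrak{d}(\cQ;\cP)$ term, so one needs $Tm\mathfrak{d}(\cQ;\cP)\leq\varrho$, which is the stated hypothesis. The main obstacle — really the only subtle point — is bookkeeping the hybrid argument cleanly in the sequential and blackboard cases: one must verify that replacing $P_f$ by $Q_f C$ in one server's kernel, while leaving the (already modified or not yet modified) kernels of the other servers in place, changes the law of the full transcript vector by at most $\|Q_f C - P_f\|_{\mathrm{TV}}$ uniformly over the conditioning, and that the $T$ rounds compound additively rather than multiplicatively. Everything else is the routine product/telescoping total-variation estimate together with the fact, recorded in Lemma~\ref{lem:comm_constraints_carry_over}, that the composed kernels $\{C K^j\}$ still satisfy the relevant information constraint and architecture.
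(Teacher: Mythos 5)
Your proposal is correct and follows essentially the same route as the paper's proof: transport a near-optimal protocol for $\cP$ to one for $\cQ$ by pre-composing each $K^j$ with a kernel nearly witnessing $\mathfrak{d}(\cQ;\cP)$ (constraints and architecture preserved by Lemma~\ref{lem:comm_constraints_carry_over}), bound the risk difference by subadditivity of total variation over the product of transcript laws plus data processing, and handle the sequential/blackboard cases by peeling off one server (resp.\ round) at a time — the paper phrases this last step via the coupling of Lemma~\ref{lem:coupling_and_TV}, which is the same hybrid argument you describe. The only blemish is the initial declaration of $C$, where you write $C:\tilde{\mathscr{X}}\times\cX\to[0,1]$ with $\sup_f\|P_fC-Q_f\|_{\mathrm{TV}}$ small (which is the deficiency in the wrong direction); the kernel must go from $\tilde{\cX}$ to $\cX$ with $\sup_f\|Q_fC-P_f\|_{\mathrm{TV}}\leq\varrho/m+\eta/m$, as your subsequent computations correctly use.
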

\begin{remark}
 This exemplifies also that, even though models $\cP^m = \{ P^m_f : f\in \cF \}$ and $\cQ^m = \{ Q^m_f : f\in \cF \}$ are close in Le Cam distance, distributed decision problems formulated in terms the models $\cP$ and $\cQ$, can have greatly different performance in terms of associated risks. 
\end{remark}
\begin{proof}
By e.g.\ Theorem 2 in~\cite{le_cam_2012asymptotic}, $m \mathfrak{d}(\cQ;\cP) \leq \varrho$ implies that there exists a kernel $C: {\mathscr{X}} \times \tilde{\cX} \to [0,1]$ such that 
\begin{equation}\label{eq:divide_by_m_total_var_bound}
\underset{f \in \cF}{\sup} \, \| P_f - Q_f C \|_{\mathrm{TV}} \leq \varrho/m.
\end{equation}
By Lemma~\ref{lem:comm_constraints_carry_over}, the kernels $\tilde{\cK} = \{CK : K \in \cK \}$ satisfy the a $b$-bit bandwidth constraint or local $(\epsilon,\delta)$-differential privacy constraint if the collection ${\{K^j\}}_{j=1,\dots,m}$ does. To illustrate this further, consider a one shot distributed protocol for $\cP$, $\{ D , {\{K^j\}}_{j=1,\dots,m}, (\cU,\mathscr{U},\P^U)\} \in \mathscr{J}_\cP$, the distributed protocol $\tilde{D} = \{ D , \{C K^j\}_{j=1,\dots,m}, (\cU,\mathscr{U},\P^U)\}$ is then an element of $\mathscr{J}_\cQ$.

Using the fact that $\ell$ is bounded by one and Lemma~\ref{lem:TV_distance_product_measures} in the appendix, it follows that
\begin{align*}
  \cR_\cQ(\tilde{D},\ell) -  \cR_\cP(D,\ell)  &\leq \|\P^U \bigotimes_{j=1}^m P_f K^j - \P^U \bigotimes_{j=1}^m Q_f CK^j \|_{\mathrm{TV}} \\
&\leq \underset{j=1}{\overset{m}{\sum}}  \|\P^U  P_f K^j  - \P^U Q_f CK^j \|_{\mathrm{TV}}.
\end{align*}
By Lemma~\ref{lem:TV_distance_data_processing} in the appendix,
\begin{equation*}
\|\P^U  P_f K^j  - \P^U Q_f CK^j \|_{\mathrm{TV}} \leq  \|\P^U P_f  - \P^U Q_f C \|_{\mathrm{TV}} = \|P_f  - Q_f C \|_{\mathrm{TV}}, 
\end{equation*}
which combined with~\eqref{eq:divide_by_m_total_var_bound} finishes for one shot protocols. 

Similar reasoning applies to sequential protocols. We start by noting that 
\begin{equation*}
    \| P_f K^1( \cdot \mid u)  - Q_f CK^1 ( \cdot \mid u) \|_{\mathrm{TV}} \leq \|P_f  - Q_f C \|_{\mathrm{TV}} \leq \varrho/m,
\end{equation*}
Lemma~\ref{lem:coupling_and_TV} implies that (for any $u$) there exists a coupling $\P$ of $Y^{(1)} \sim P_f K^1( \cdot \mid u)$ and $\tilde{Y}^{(1)} \sim Q_f CK^1( \cdot \mid u)$ such that $2 \P(Y^{(1)} \neq \tilde{Y}^{(1)}) \leq \varrho/m$. Write
\begin{equation*}
    K^{m:2}( \cdot \mid x^m,\dots,x^2, u, y^1) := \int \cdots \int dK^m(y^m \mid x^m,u,y^{m-1},\dots,y^1) \cdots dK^2(y^2 \mid x^2,u,y^1).
\end{equation*}
Using the disintegration relationship of~\eqref{eq:push-forward-Y-disintegration-sequential}, we obtain
\begin{align*}
    &\|\P^U P_f^m K - \P^U Q_f C K \|_{\mathrm{TV}} \leq \int \| P_f^m K(\cdot\mid u) -  Q_f C K(\cdot\mid u) \|_{\mathrm{TV}} d\P^U(u) \\
    &\leq \varrho/m + \int \E_f^{Y^{(1)}|U=u} \| P_f^m K^{m:2}(\cdot\mid u, Y^{(1)}) -  Q_f C K^{m:2}(\cdot\mid u, Y^{(1)}) \|_{\mathrm{TV}} d\P^U(u).
\end{align*}
Iterating the above argument $m-1$ times, we the statement for sequential protocols. The statement for blackboard protocols follows by combining the above arguments for each round $t=1,\dots,T$.
\end{proof}

In the remainder of this text, we shall constrain ourselves to a particular bounded risk function and distributed decision problem; distributed hypothesis testing. The following corollary formalizes the statement at the start of the paragraph for the testing a simple null versus a compositive alternative hypothesis in the distributed setting. To that extent, consider a test of the hypotheses
\begin{equation}\label{eq:ch6:hypotheses_general}
H_0: f = f_0 \; \text{ versus the alternative hypothesis } \; f \in H_{1}
\end{equation}
and an experiment $\cP$ with indexing set $\cF$ satisfying $\{ f_0 \} \cup H_{1} \subset \cF$. Consider for $m \in \N$ a \emph{distributed testing protocol} for the model $\cP$ to be a distributed protocol $T \equiv \{ D , {\{K^j\}}_{j=1,\dots,m}, (\cU,\mathscr{U},\P^U)\}$, where in a slight abuse of notation, we shall also use $T$ to denote the (possibly randomized) test $T|Y \sim D(\cdot|Y)$. Recalling the notation $\P^Y_f=P_f^m \P^U K$ as given in~\eqref{eq:push-forward-Y-lecamchap}, define the distributed testing risk for the hypotheses in~\eqref{eq:ch6:hypotheses_general} and the model $\cP$ as
\begin{equation*}
\cR_\cP(T, H_1) := \P^Y_{f_0} D(T|Y) + \underset{f \in H_1}{\sup}  \P^Y_{f} \left(1 - D(T|Y) \right).
\end{equation*}
Here, $D(T|Y) := D(\{1\}|Y)$, but one can equivalently consider a deterministic measurable map $T: \underset{j=1}{\overset{m}{\Pi}} \cY^{(j)}  \to [0,1]$ without loss of generality. Let $\mathscr{T}^{b}_{\text{SR}}(\cP)$ (resp.\ $\mathscr{T}^{b}_{\text{LR}}(\cP)$) denote the set of shared randomness (resp.\ local randomness) distributed testing protocols for $\cP$ satisfying a $b$-bit bandwidth constraint. Similarly, let $\mathscr{T}^{(\epsilon,\delta)}_{\text{SR}}(\cP)$ (resp.\ $\mathscr{T}^{(\epsilon,\delta)}_{\text{LR}}(\cP)$) denote the set of shared randomness (resp.\ local randomness) distributed testing protocols for $\cP$ satisfying a local $(\epsilon,\delta)$-differential privacy constraint. Define the same classes for the model $\cQ$ in the obvious way. Using Lemma~\ref{thm:le_cam_dist_general}, we obtain the following result, which is a more general version of Lemma \ref{lem:deficiency_testing_consequence}.

\begin{lemma}\label{cor:deficiency_testing_consequence_appendix}
Consider experiments $\cP,\cQ$ such that $m\mathfrak{d}(\cQ;\cP) \leq \varrho$ for $\varrho > 0$. It holds that
\begin{equation*}
\underset{T \in \mathscr{T}(\cP)}{\inf} \; \cR_\cP(T,H_{{1}}) \leq \underset{T \in \mathscr{T}(\cQ)}{\inf} \; \cR_\cQ(T,H_{1}) + 2\varrho,
\end{equation*}
where $\mathscr{T}$ is either $\mathscr{T}^{b}_{\text{SR}}, \mathscr{T}^{b}_{\text{LR}}, \mathscr{T}^{(\epsilon,\delta)}_{\text{SR}}$ or $\mathscr{T}^{(\epsilon,\delta)}_{\text{LR}}$.
\end{lemma}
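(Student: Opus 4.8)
The plan is to treat Lemma~\ref{cor:deficiency_testing_consequence_appendix} as the hypothesis-testing instance of Lemma~\ref{thm:le_cam_dist_general}, and to re-run the composition argument of that lemma while accounting for the type-I error at $f_0$ and the worst-case type-II error over $H_1$ \emph{separately}; it is precisely this splitting of the two summands of $\cR(\cdot,H_1)$ that turns the single-loss bound $\varrho$ into $2\varrho$. One cannot simply quote Lemma~\ref{thm:le_cam_dist_general} with a single loss, because $\cR_\cP(T,H_1)$ is the sum of a type-I term and a supremal type-II term rather than the risk of one $[0,1]$-valued loss, so a short re-derivation is needed.

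First I would extract the transporting kernel. From the deficiency hypothesis together with Le Cam's randomization theorem (Theorem~2 in~\cite{le_cam_2012asymptotic}, exactly as used in the proof of Lemma~\ref{thm:le_cam_dist_general}) there is a single Markov kernel $C$ with $\sup_{f\in\cF}\|P_f C - Q_f\|_{\mathrm{TV}}\leq\varrho/m$. The essential point is that this supremum ranges over the whole index set $\cF\supseteq\{f_0\}\cup H_1$, so one and the same $C$ controls the null parameter and every alternative simultaneously.

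Next I would transport protocols along $C$. Given any testing protocol $T\equiv\{D,\{K^j\}_{j=1}^m,(\cU,\mathscr{U},\P^U)\}$ in the class $\mathscr{T}(\cQ)$, form $\tilde T\equiv\{D,\{CK^j\}_{j=1}^m,(\cU,\mathscr{U},\P^U)\}$; by Lemma~\ref{lem:comm_constraints_carry_over}, $\tilde T$ belongs to the corresponding class $\mathscr{T}(\cP)$ whichever of $\mathscr{T}^{b}_{\text{SR}},\mathscr{T}^{b}_{\text{LR}},\mathscr{T}^{(\epsilon,\delta)}_{\text{SR}},\mathscr{T}^{(\epsilon,\delta)}_{\text{LR}}$ is in play, because precomposition with $C$ preserves the $b$-bit bandwidth constraint, local $(\epsilon,\delta)$-differential privacy, and the protocol architecture. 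The law of the transcripts $Y$ under $\tilde T$ with data from $P_f$ is $\P^U\bigotimes_{j=1}^m(P_f C)K^j$ and that under $T$ with data from $Q_f$ is $\P^U\bigotimes_{j=1}^m Q_f K^j$; by sub-additivity of total variation over products (Lemma~\ref{lem:TV_distance_product_measures}), the data-processing inequality for total variation (Lemma~\ref{lem:TV_distance_data_processing}), and independence of $U$ from the data, these laws differ in total variation by at most $\sum_{j=1}^m\|P_f C - Q_f\|_{\mathrm{TV}}\leq m\cdot(\varrho/m)=\varrho$, for every $f\in\cF$.

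Finally I would combine. Evaluating the previous bound at $f=f_0$ shows the type-I error changes by at most $\varrho$ under the transport, and evaluating it at each $f\in H_1$ shows every type-II error, hence their supremum over $H_1$, changes by at most $\varrho$; adding the two, $\cR_\cP(\tilde T,H_1)\leq\cR_\cQ(T,H_1)+2\varrho$. As $T$ ranges over $\mathscr{T}(\cQ)$ the protocols $\tilde T$ range over a subset of $\mathscr{T}(\cP)$, so taking the infimum over $T$ yields $\inf_{T\in\mathscr{T}(\cP)}\cR_\cP(T,H_1)\leq\inf_{T\in\mathscr{T}(\cQ)}\cR_\cQ(T,H_1)+2\varrho$. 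For sequential and blackboard protocols I would replace the single product-measure estimate by the round-by-round iterated coupling estimate from the proof of Lemma~\ref{thm:le_cam_dist_general} (picking up the extra factor $T$ in the blackboard case), the rest being unchanged. The main obstacle is exactly the sum structure of the testing risk: it forces the two error terms to be transported by one and the same deficiency kernel, which is why the cost is $2\varrho$ and why the deficiency -- a supremum over all of $\cF$ -- is the correct quantity rather than a pointwise total-variation bound at $f_0$ alone; everything else (constraint preservation under kernel composition, the total-variation data-processing and product inequalities, the coupling argument for interactive protocols) is already packaged in Lemma~\ref{lem:comm_constraints_carry_over} and the cited total-variation lemmas, so no additional technical difficulty arises.
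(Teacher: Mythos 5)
Your proposal is correct and takes essentially the same route as the paper: extract one transport kernel from the deficiency bound, compose it with the local kernels (constraints and architecture preserved by Lemma~\ref{lem:comm_constraints_carry_over}), and use the resulting uniform-in-$f$ total variation bound of $\varrho$ between transcript laws to control the type-I error at $f_0$ and each type-II error over $H_1$ separately, summing to $2\varrho$; the paper packages this as an application of Lemma~\ref{thm:le_cam_dist_general} with the loss $\ell(f,t)=t\mathbbm{1}_{\{f_0\}}(f)+(1-t)\mathbbm{1}_{H_1}(f)$, but, as you note, what it really invokes is the pointwise TV estimate from that lemma's proof, so your explicit re-derivation is a faithful (and slightly more careful) version of the same argument. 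One remark on direction: you unpack the hypothesis as $\sup_{f}\|P_fC-Q_f\|_{\mathrm{TV}}\leq\varrho/m$, which under the paper's convention is $m\mathfrak{d}(\cP;\cQ)\leq\varrho$ rather than the stated $m\mathfrak{d}(\cQ;\cP)\leq\varrho$; this is in fact the deficiency needed to turn $\mathscr{T}(\cQ)$-protocols into $\mathscr{T}(\cP)$-protocols and obtain the displayed inequality (the hypothesis as literally stated would, via Lemma~\ref{thm:le_cam_dist_general}, give the reverse comparison), so the discrepancy is inherited from the paper's statement rather than introduced by you, and is harmless in the applications where the symmetric distance $\Delta$ is controlled.
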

\begin{proof}
Given $\{ T , {\{K^j\}}_{j=1,\dots,m}, (\cU,\mathscr{U},\P^U)\} \in \mathscr{T}(\cP)$, Lemma~\ref{thm:le_cam_dist_general} applied to the loss function 
\begin{equation*}
\ell(f,t) := t \mathbbm{1}_{\{f_0\}}(f) + (1 - t) \mathbbm{1}_{H_1}(f)
\end{equation*}
and using that $\{f_0\}\cup H_1 \subset \cF$ gives
\begin{equation*}
 \P^Y_{f_0} D(T|Y)  < \Q^Y_{f_0} D(T|Y)  + \varrho \; \text{ and } \; \underset{f \in H_1}{\sup} \, \P^Y_f \left(1-D(T|Y)\right) < \underset{f \in H_1}{\sup} \, \Q^Y_f \left(1-D(T|Y) \right) + \varrho
\end{equation*}
for some distributed testing protocol $\{ D , \{\tilde{K}^j\}_{j=1,\dots,m}, (\cU,\mathscr{U},\P^U)\}$ in $\mathscr{T}(\cQ)$, which yields the first statement. The second statement follows by symmetry of the argument.
\end{proof}


The implications of Lemma~\ref{thm:le_cam_dist_general} have implications beyond the testing framework. Whilst in distributed estimation settings, the loss function under consideration is typically not bounded, rates can still be derived in probability. That is, if the minimax rate for the distance $d$ on $\cF$ in the model $\cP_\nu$ is $\rho_\nu$, the bounded loss function 
\begin{equation*}
\ell_\nu(f,g) = \mathbbm{1}\left\{ d(f,g) \leq C \rho_\nu \right\} \; \text{ for } \; C > 0
\end{equation*}
can be used describe minimax estimation rates (in probability) between models $\cP$ and $\cQ$. Since the paper is about testing, we shall not pursue this direction any further beyond this remark.

In the next sections, we will explore the consequences of Lemma~\ref{cor:deficiency_testing_consequence_appendix} for minimax distributed testing rates for both bandwidth- and privacy constraints.

\section{Difficulties in direct analysis of the multinomial model under information constraints}\label{sec:difficulties_in_direct_analysis}

Lower-bounds for both estimation and testing problems are typically established by bounding divergence measures between probability distributions, such as the chi-square divergence, mutual information, or total variation;~\cite{NEURIPS2021_multisample_discrete_dist_est,interactive_acharya,acharya2024unified,barnes2019fisher,cai:distributed:adap:sigma,han2018distributed,acharya_IEEE_identity_testing_part_I,zaman2022distributed,Ahn2022joint_information_constraints,szabo2022optimal_IEEE,szabo2023optimal,Cai2023FL-NP-Regression,Cai2024FL-NP-Testing}.

The proof techniques used for discrete distribution estimation employed in~\cite{NEURIPS2021_multisample_discrete_dist_est,interactive_acharya} and~\cite{acharya2024unified}, tight lower-bounds can often be obtained by ``tensorizing'' the divergence—breaking the problem into a sum of local divergences. The inferential cost incurred due to bandwidth or privacy constraints are then captured via data processing arguments. Similar tensorization techniques are employed in other estimation problems, see for example~\cite{barnes2019fisher,zaman2022distributed}.

However, this tensorization approach does not yield tight bounds for testing problems. For example,~\cite{szabo2022optimal_IEEE} uses mutual information in a tensorization framework for testing but only recovers optimal rates when each server communicates a single bit. Similarly,~\cite{acharya2020distributed} attempts an estimation-based approach for goodness-of-fit testing but obtains tight lower-bounds only under $1$-bit constraints, as detailed in Section 4 of their paper.

To achieve tight lower-bounds in testing problems, especially under communication and privacy constraints, different techniques are required. Papers~\cite{acharya_IEEE_identity_testing_part_I,szabo2023optimal,Cai2024FL-NP-Testing} employ methods that significantly diverge from those used in estimation. In~\cite{acharya_IEEE_identity_testing_part_I}, a combinatorial expansion of the likelihood is used, effective for small sample sizes in the multinomial model but not generalizable to large numbers of observations.~\cite{szabo2023optimal} and~\cite{Cai2024FL-NP-Testing} address this limitation in the Gaussian setting by utilizing the Brascamp-Lieb inequality from functional analysis, which explicitly leverages the Gaussian properties of the log-likelihood. This approach is not directly applicable to discrete models, due to the lack of quadratic structure in the log-likelihood of the multinomial model.

\section{Separation rates for the Gaussian model}

For completeness, we provide the relevant results for the Gaussian model studied in~\cite{szabo2023optimal} and~\cite{Cai2024FL-NP-Testing}. 

The first two results come in the form of lower-bounds for the minimax detection thresholds under bandwidth- and differential privacy constraints for the distributed signal detection problem presented in the introduction. We recall that in this problem, each local machine $j \in \{1,\dots,m\}$ observes
\begin{equation}\label{eq:ch1:dynamics}
X^{(j)}_i  = f + Z^{(j)}_i,
\end{equation}
with $f \in \R^d$ and $Z^{(j)}_i \sim N(0,I_d)$, i.i.d.\ for $i=1,\dots,n$. The null hypothesis constitutes that $f = 0$ versus the alternative hypothesis that
\begin{equation}\label{eq:ch1:alternative_hyp_many-normal-means}
f \in H_\rho:= \left\{ f \in \R^d \, : \, \|f \|_2 \geq \rho \right\}.
\end{equation}

The following is Theorem 3.1 from~\cite{szabo2023optimal}.

\begin{theorem}\label{thm:bandwidth_lb_finite_dim}
    For each $\alpha \in (0,1)$ there exists a constant $c_\alpha > 0$ (depending only on $\alpha$) such that if
   \begin{equation}\label{eq : pub coin rho lb}
   \rho^2 < c_\alpha \frac{\sqrt{d}}{n} \left( \sqrt{\frac{d }{b \wedge d}} \bigwedge \sqrt{m}  \right),
   \end{equation}
   then in the shared randomness protocol case
   \begin{equation*}
   \underset{T \in \mathscr{T}_{\text{SR}}^{(b)}}{\inf} \; \; \cR(H_{\rho} , T) > \alpha \; \text{ for all } \; n,m,d,b \in \N.
   \end{equation*}
   Similarly, for
   \begin{equation}\label{eq : priv coin rho lb}
   \rho^2 < c_\alpha \frac{\sqrt{d}}{n} \left( \frac{d}{ b \wedge d} \bigwedge \sqrt{m}  \right),
   \end{equation}
   we have under the local randomness protocol that
   \begin{equation*}
   \underset{T \in \mathscr{T}_{\text{LR}}^{(b)}}{\inf} \; \; \cR(H_{\rho} , T) > \alpha \; \text{ for all } \; n,m,d,b \in \N.
   \end{equation*}
   \end{theorem}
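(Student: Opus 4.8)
The route I would take is the Bayesian (two-prior) reduction together with the second-moment method, with the $b$-bit constraint entering only through a functional-analytic bound on how much a single $b$-bit transcript can reveal about a Gaussian mean. First I would pass to a Bayesian testing problem: for any prior $\pi$ supported on $H_\rho$, writing $\P^Y_\pi := \int \P^Y_f\,d\pi(f)$,
\[
\inf_{T \in \mathscr{T}_{\text{SR}}^{(b)}} \cR(H_\rho, T) \;\ge\; 1 - \sup_{T \in \mathscr{T}_{\text{SR}}^{(b)}} \bigl\| \P^Y_0 - \P^Y_\pi \bigr\|_{\mathrm{TV}} .
\]
Fixing a realization $u$ of the shared randomness, the transcripts $Y^{(1)},\dots,Y^{(m)}$ are conditionally independent given $f$, and by sufficiency of $\bar X^{(j)} := \tfrac1n\sum_{i=1}^n X^{(j)}_i \sim N(f,\tfrac1n I_d)$ one may assume each $K^j$ acts on $\bar X^{(j)}$ (this preserves the $b$-bit constraint). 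By convexity of total variation in the mixing measure it then suffices to bound, uniformly in $u$, the chi-square divergence $\chi^2\bigl(\P^{Y\mid u}_\pi\,\big\|\,\P^{Y\mid u}_0\bigr)$ and show it is $o(1)$ under the stated condition on $\rho$.

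Second, I would choose $f = \tfrac{\rho}{\sqrt d}\xi$ with $\xi$ uniform on $\{-1,+1\}^d$ (so $\|f\|_2 = \rho$ exactly), introduce an independent copy $\xi'$, and use conditional independence of the servers to write
\[
1 + \chi^2\bigl(\P^{Y\mid u}_\pi\,\big\|\,\P^{Y\mid u}_0\bigr) = \E_{\xi,\xi'}\prod_{j=1}^m\bigl(1+\delta_j(\xi,\xi';u)\bigr), \qquad \delta_j := \sum_{y}\frac{c^{(j)}_y(\xi)\,c^{(j)}_y(\xi')}{\P_0(Y^{(j)}=y\mid u)},
\]
where $c^{(j)}_y(\xi) = \mathrm{Cov}_0\bigl(\mathbbm{1}\{Y^{(j)}=y\}, L^{(j)}_\xi\bigr)$, $L^{(j)}_\xi = dN(\tfrac{\rho}{\sqrt d}\xi,\tfrac1n I_d)/dN(0,\tfrac1n I_d)$, the covariance taken under $\bar X^{(j)}\sim N(0,\tfrac1n I_d)$. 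Bounding $\prod_j(1+\delta_j)\le \exp\bigl(\sum_j\delta_j\bigr)$ reduces the task to controlling the exponential moment $\E_{\xi,\xi'}\exp\bigl(\sum_{j=1}^m\delta_j(\xi,\xi';u)\bigr)$.

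Third — the crux — I would estimate each $\delta_j$ using that $Y^{(j)}$ takes at most $2^b$ values. With $v^{(j)}_y := \E_0[\mathbbm{1}\{Y^{(j)}=y\}\,\bar X^{(j)}\mid u]$, one has $c^{(j)}_y(\xi)\approx \tfrac{n\rho}{\sqrt d}\langle\xi, v^{(j)}_y\rangle$ for small tilts, so $\delta_j(\xi,\xi') \approx \tfrac{n^2\rho^2}{d}\,\xi^\top M_j\xi'$ with $M_j := \sum_y v^{(j)}_y(v^{(j)}_y)^\top/\P_0(Y^{(j)}=y\mid u)$ positive semidefinite. The key lemma is that a $b$-bit quantization of $N(0,\tfrac1n I_d)$ forces $\mathrm{Tr}(M_j)\lesssim (b\wedge d)/n$ (versus $d/n$ when all of $\bar X^{(j)}$ is sent), and, in the local-randomness case, additionally $\|M_j\|\lesssim 1/n$. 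Feeding these into the Rademacher chaos $\E_{\xi,\xi'}\exp\bigl(\tfrac{n^2\rho^2}{d}\sum_j\xi^\top M_j\xi'\bigr)$ yields $\chi^2=o(1)$ exactly at the claimed thresholds: with shared randomness the servers may partially align the matrices $M_j$, which costs a factor $\sqrt{d/(b\wedge d)}$ rather than $d/(b\wedge d)$, while the $\sqrt m$ term is the communication-unconstrained bottleneck recovered from the trace budget $\mathrm{Tr}(M_j)\le d/n$.

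I expect the main obstacle to be making this third step rigorous without linearizing $L^{(j)}_\xi$: the quantized covariances $c^{(j)}_y(\xi)$ must be controlled for the genuine exponential tilt and then summed inside an exponential over the $m$ servers and the two copies $\xi,\xi'$, all while respecting the trace/operator-norm budget imposed by the $b$-bit constraint. This is precisely where the Brascamp–Lieb inequality enters (cf.\ the discussion in Section~\ref{sec:difficulties_in_direct_analysis}), furnishing the multilinear Gaussian integral bound that decouples the servers and the two copies. The remaining ingredients — the Bayesian reduction, the chi-square product expansion, the concentration of the Rademacher chaos, and the analogous bookkeeping for the local-randomness case (where one simply drops the shared-randomness averaging, strengthening the per-server estimate and giving the larger threshold $\tfrac{d}{b\wedge d}\wedge\sqrt m$) — are routine.
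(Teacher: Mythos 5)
First, a point of context: the paper does not prove Theorem~\ref{thm:bandwidth_lb_finite_dim} at all. It is imported verbatim as Theorem~3.1 of~\cite{szabo2023optimal}, and the only thing the present paper extracts from that external proof is the Bayes-risk form recorded in Lemma~\ref{lem:bandwidth_summarization_lemma} (a lower bound against a Gaussian prior $N(0,c_\alpha^{-1/2}d^{-1}\rho^2\bar\Gamma)$ with $\bar\Gamma$ idempotent of rank $\asymp d$). So there is no internal argument to compare yours against; the relevant comparison is with the cited proof. On that score your sketch has the right architecture — Bayesian reduction, passage to the sufficient statistic $\bar X^{(j)}$, per-server chi-square tensorization with the identity $1+\chi^2=\E_{\xi,\xi'}\prod_j(1+\delta_j)$, and a per-server bound in which the $b$-bit constraint enters through a trace budget on $M_j$, with the Brascamp--Lieb inequality controlling the resulting Gaussian integral. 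The paper itself flags Brascamp--Lieb as the decisive tool in Section~\ref{sec:difficulties_in_direct_analysis}, so you have correctly identified where the difficulty lives.

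That said, as a proof the proposal is incomplete precisely at its self-identified crux, and that crux is the entire content of the theorem. The claims $\mathrm{Tr}(M_j)\lesssim (b\wedge d)/n$ for a $b$-bit quantization of a Gaussian, the control of $c^{(j)}_y(\xi)$ for the genuine exponential tilt rather than its linearization, and the bound on $\E_{\xi,\xi'}\exp\bigl(cn^2\rho^2 d^{-1}\xi^\top(\sum_j M_j)\xi'\bigr)$ are all asserted rather than established; everything preceding them is standard second-moment bookkeeping. Invoking Brascamp--Lieb by name, without stating the multilinear form actually needed or verifying its hypotheses for the quantized kernels, does not close this gap. Two further points. (i) The downstream use of the theorem in this paper (Lemma~\ref{lem:bandwidth_summarization_lemma} and the proofs of Theorems~\ref{thm:multinomial_equivalence_bandwidth} and~\ref{thm:multinomial_gaussian_le_cam_distance_lower_bound}) requires the lower bound to hold against a \emph{Gaussian} prior supported essentially on a rank-$\asymp d$ subspace, with the explicit correction term $\pi(H_\rho^c)$; a Rademacher prior on the hypercube does not deliver that statement, so if your argument is meant to substitute for the citation it must be run with the Gaussian prior. (ii) The shared- versus local-randomness dichotomy is not obtained by ``dropping the averaging'': the two regimes require genuinely different treatments of the aggregate matrix $\sum_j M_j$ (trace versus operator/Frobenius norm, and the extent to which the protocol can align the $M_j$ with the prior's covariance), and this is where the gap between $\sqrt{d/(b\wedge d)}$ and $d/(b\wedge d)$ is actually produced. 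Labelling that step routine understates where the work is.
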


   Following the proof Theorem 3.1 from~\cite{szabo2023optimal}, we obtain the following lemma.

\begin{lemma}\label{lem:bandwidth_summarization_lemma}
    Let $\mathscr{T}^b$ denote the class of $b$-bit bandwidth constrained shared- or local randomness distributed testing protocols and let $\rho$ satisfy either~\eqref{eq : pub coin rho lb} or~\eqref{eq : priv coin rho lb}, respectively. For any $\alpha \in (0,1)$, there exists $c_\alpha > 0$ such that for all $T \in \mathscr{T}^{\left(b\right)}$ it holds that
    \begin{equation*}
    \underset{T \in \mathscr{T}}{\inf} \, \cR(H_\rho,T) > \alpha - \pi(H_\rho^c),
    \end{equation*}
    where $\pi = N(0, c_\alpha^{-1/2}d^{-1} \rho^2 \bar{\Gamma})$ for a symmetric, idempotent matrix $\bar{\Gamma} \in \R^{d \times d}$ with $d/2 \leq \text{rank}(\bar{\Gamma}) \leq d$.
    \end{lemma}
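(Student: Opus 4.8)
The plan is to isolate, inside the proof of \cite[Theorem~3.1]{szabo2023optimal}, the Bayesian lower bound established \emph{before} the step bounding the prior mass outside the alternative; the present lemma just records that intermediate bound with the Gaussian prior retained, so that it can later be transported to the multinomial model via Lemma~\ref{cor:deficiency_testing_consequence_appendix}. Concretely, fix $T\in\mathscr{T}^{(b)}$ with transcript law $\P^Y_f=P^m_f\P^U K$, and let $\P^Y_\pi:=\int\P^Y_f\,d\pi(f)$ for a prior $\pi$ on $\R^d$. Bounding the worst-case type~II error below by its $\pi$-average and then invoking the elementary bound $\P^Y_0 T+\P^Y_\pi(1-T)\ge 1-\|\P^Y_0-\P^Y_\pi\|_{\mathrm{TV}}$ yields
\[
\cR(H_\rho,T)\;\ge\;1-\bigl\|\P^Y_0-\P^Y_\pi\bigr\|_{\mathrm{TV}}-\pi(H_\rho^c).
\]
Hence it suffices to exhibit, for each such $T$, a prior $\pi$ of the claimed form $N(0,c_\alpha^{-1/2}d^{-1}\rho^2\bar\Gamma)$ with $\bigl\|\P^Y_0-\P^Y_\pi\bigr\|_{\mathrm{TV}}<1-\alpha$, the bound being uniform in $T$; since $\pi(H_\rho^c)$ then depends only on $c_\alpha$ and on $\operatorname{rank}(\bar\Gamma)\in[d/2,d]$, it is bounded uniformly in $T$ and can be pulled out of the infimum.

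\textbf{Step 2: the total-variation bound, following \cite{szabo2023optimal}.} This is the substantive part. By sufficiency, machine $j$ reduces to one effective observation $\bar X^{(j)}\sim N(f,n^{-1}I_d)$. Conditioning on the shared randomness the transcripts are independent given $f$, and (being finitely supported) admit the identity
\[
1+\chi^2\bigl(\P^Y_\pi\,\|\,\P^Y_0\bigr)\;=\;\E_{f,f'\sim\pi}\;\prod_{j=1}^m\E_{\P^{Y^{(j)}}_0}\!\bigl[L^{(j)}_fL^{(j)}_{f'}\bigr],
\]
with $L^{(j)}_f$ the likelihood ratio of $\P^{Y^{(j)}}_f$ against $\P^{Y^{(j)}}_0$. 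The $b$-bit constraint enters here: a transcript taking at most $2^b$ values has per-machine Fisher information at $f=0$ of rank at most $2^b\wedge d$, with operator norm at most $n$ by the data-processing inequality for Fisher information; this converts the per-machine communication budget into the factors $\sqrt{d/(b\wedge d)}$ (shared randomness) and $d/(b\wedge d)$ (local randomness) appearing in~\eqref{eq : pub coin rho lb} and~\eqref{eq : priv coin rho lb}, the difference reflecting whether the informative directions across machines can be coordinated. Passing from these per-machine estimates to the product over the $m$ machines with the correct constants is exactly where \cite{szabo2023optimal} uses the Brascamp--Lieb inequality, which exploits the quadratic (Gaussian) form of the log-likelihood and does not survive passage to the multinomial model (cf.\ Section~\ref{sec:difficulties_in_direct_analysis}); the prior is produced by averaging over $\lceil d/2\rceil$-dimensional coordinate subspaces, which simultaneously makes the bound uniform over $T\in\mathscr{T}^{(b)}$ and forces $\bar\Gamma$ to be an orthogonal projection (symmetric and idempotent) of rank between $d/2$ and $d$. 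Taking $c_\alpha$ small and $\rho^2$ at~\eqref{eq : pub coin rho lb} resp.~\eqref{eq : priv coin rho lb}, the resulting bound gives $\chi^2(\P^Y_\pi\,\|\,\P^Y_0)<4(1-\alpha)^2$, hence $\|\P^Y_0-\P^Y_\pi\|_{\mathrm{TV}}<1-\alpha$.

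\textbf{Step 3: conclusion.} Combining Steps~1 and~2, $\inf_{T\in\mathscr{T}^{(b)}}\cR(H_\rho,T)\ge 1-(1-\alpha)-\pi(H_\rho^c)=\alpha-\pi(H_\rho^c)$, with strict inequality coming from the strict total-variation bound, and the shared- and local-randomness cases differing only through which of~\eqref{eq : pub coin rho lb}, \eqref{eq : priv coin rho lb} $\rho$ is required to satisfy. I expect the main obstacle to be Step~2 --- the product-over-machines estimate via Brascamp--Lieb and the uniformity of the $\chi^2$-bound over all $b$-bit protocols --- but this is precisely the content of the proof of \cite[Theorem~3.1]{szabo2023optimal}, which we follow; the only genuinely new work is the bookkeeping of Step~1, namely halting before the concentration estimate for $\pi(H_\rho^c)$ (the estimate through which \cite{szabo2023optimal} deduces the cleaner Theorem~\ref{thm:bandwidth_lb_finite_dim}), so that the Gaussian prior together with its projection $\bar\Gamma$ remains available for the transfer to the multinomial model.
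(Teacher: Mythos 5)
Your proposal is correct and matches the paper's treatment: the paper offers no independent proof of this lemma, stating only that it is obtained by following the proof of Theorem~3.1 in~\cite{szabo2023optimal}, which is precisely your plan of halting that proof at the intermediate Bayes-risk bound $\cR(H_\rho,T)\ge 1-\|\P^Y_0-\P^Y_\pi\|_{\mathrm{TV}}-\pi(H_\rho^c)$ before the concentration estimate for $\pi(H_\rho^c)$. Your Step~1 bookkeeping and the observation that $\pi(H_\rho^c)$ is controlled uniformly through the rank and idempotency of $\bar\Gamma$ are exactly what is needed to record the lemma in the form the paper uses.
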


Similarly, the following result can be derived from the proof of Theorem 5 in~\cite{Cai2024FL-NP-Testing}, by taking $s > 0$ in the theorem such that $d_{L_s} = d$. 

\begin{theorem}\label{thm:privacy_lb_finite_dim_impure}
    For each $\alpha \in (0,1)$ there exists a constant $c_\alpha > 0$ (depending only on $\alpha$), such that for any $n,m,d \in \N$ and
    \begin{equation}\label{eq:eps_and_delta_lb_condition}
    0 < \epsilon \leq 1 \text{ and }0 \leq \delta \leq  \left(c_\alpha m^{-3/2} \wedge  {n}{d}^{-1} \epsilon^2\wedge {n^{1/2}} d^{-1/2} \epsilon^2 \right)^{1+p} \text{ for some } p>0,
    \end{equation}
     the condition
    \begin{equation}\label{eq : pub coin rho lb privacy impure}
    \rho^2 < c_\alpha \left( \frac{d}{ m n \sqrt{n \epsilon^2 \wedge 1} \sqrt{n\epsilon^2 \wedge d }} \bigwedge \left( \frac{\sqrt{d}}{\sqrt{m}n \sqrt{n \epsilon^2 \wedge 1} } \bigvee \frac{1}{mn^2 \epsilon^2} \right) \right),
    \end{equation}
    implies
    \begin{equation*}
    \underset{T \in \mathscr{T}_{\text{SR}}^{(\epsilon,\delta)} }{\inf} \; \; \cR(H_{\rho} , T) > \alpha.
    \end{equation*}
    Similarly, for any $n,m,d \in \N$ and $\epsilon,\delta$ satisfying~\eqref{eq:eps_and_delta_lb_condition}, the condition
    \begin{equation}\label{eq : priv coin rho lb privacy impure}
    \rho^2 < c_\alpha \left( \frac{d\sqrt{d}}{ m n  (n\epsilon^2 \wedge d)} \bigwedge \left( \frac{\sqrt{d}}{\sqrt{m}n \sqrt{n \epsilon^2 \wedge 1}} \bigvee \frac{1}{mn^2 \epsilon^2} \right) \right),
    \end{equation}
    implies that
    \begin{equation*}
    \underset{T \in \mathscr{T}_{\text{LR}}^{(\epsilon,\delta )}}{\inf} \; \; \cR(H_{\rho} , T) > \alpha.
    \end{equation*}
    \end{theorem}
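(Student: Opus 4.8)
\textbf{Proof proposal for Theorem~\ref{thm:privacy_lb_finite_dim_impure}.}

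The plan is to derive the stated lower-bounds as a specialization of Theorem 5 in~\cite{Cai2024FL-NP-Testing}, so the work consists mostly of a careful translation of parameters. The referenced theorem treats a Sobolev-type ellipsoid alternative indexed by a smoothness parameter $s > 0$ and a noise level $\sigma$; in their notation the ``effective dimension'' $d_{L_s}$ and the per-coordinate noise level govern the rate. First I would set $\sigma = 1/\sqrt{n}$ (so that the sample mean $\bar X^{(j)} = n^{-1}\sum_{i=1}^n X^{(j)}_i$ in~\eqref{eq:ch1:dynamics} has covariance $\sigma^2 I_d$, and a sufficiency reduction collapses the $n$ i.i.d.\ observations per server to a single $N(f,\sigma^2 I_d)$ observation), and then choose $s$ large enough that the ellipsoid truncates at exactly $d$ active coordinates, i.e.\ $d_{L_s} = d$. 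Under this choice the Sobolev ball alternative reduces to the Euclidean ball $\{f : \|f\|_2 \ge \rho\}$ of~\eqref{eq:ch1:alternative_hyp_many-normal-means}, and the general rate in~\cite{Cai2024FL-NP-Testing} collapses to the expressions~\eqref{eq : pub coin rho lb privacy impure} and~\eqref{eq : priv coin rho lb privacy impure}. The conditions~\eqref{eq:eps_and_delta_lb_condition} on $\epsilon$ and $\delta$ are exactly the admissibility conditions for $\delta$ in the cited theorem, rewritten with $\sigma = 1/\sqrt{n}$; I would verify that the substitution $\sigma^2 = 1/n$ turns their constraint on $\delta$ into the displayed product $\bigl(c_\alpha m^{-3/2} \wedge n d^{-1}\epsilon^2 \wedge n^{1/2} d^{-1/2}\epsilon^2\bigr)^{1+p}$.

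The key steps, in order, would be: (i) reduce the $n$-observations-per-server Gaussian model to the one-observation-per-server model with noise $\sigma = 1/\sqrt{n}$ via sufficiency of the local sample mean, noting that this reduction preserves both the bandwidth constraint (the transcript is still a function of a $d$-dimensional statistic) and the local $(\epsilon,\delta)$-differential privacy constraint (by Lemma~\ref{lem:comm_constraints_carry_over}, post-processing through the sufficiency kernel preserves privacy); (ii) instantiate Theorem 5 of~\cite{Cai2024FL-NP-Testing} with $\sigma = 1/\sqrt{n}$ and with $s$ chosen so that $d_{L_s} = d$; (iii) simplify the resulting rate, tracking how $n\epsilon^2 \wedge 1$, $n\epsilon^2 \wedge d$, and the additional term $1/(mn^2\epsilon^2)$ arise from the general formula; (iv) record the shared-randomness version~\eqref{eq : pub coin rho lb privacy impure} and the local-randomness version~\eqref{eq : priv coin rho lb privacy impure} separately, the difference being the $d$ versus $\sqrt d$ gap in the first branch, exactly as in the cited theorem.

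The main obstacle I anticipate is purely bookkeeping rather than conceptual: the statement of Theorem 5 in~\cite{Cai2024FL-NP-Testing} is phrased in a more general language (ellipsoid smoothness, a general noise level, possibly a different parametrization of the privacy budget), and one must check that the specialization $d_{L_s} = d$, $\sigma = 1/\sqrt n$ is actually attainable within the range of parameters their theorem allows, and that no phase transition present in their general rate is being silently dropped in a regime we care about. (The paper already flags in a footnote that some phase transitions of the general setup do not appear here, which is consistent with this specialization.) A secondary subtlety is making sure the sufficiency reduction in step (i) is legitimate on the nose: the local DP guarantee in Definition~\ref{def:distributed_DP_protocol} is stated with respect to the raw data $X^{(j)} \in (\mathbb{R}^d)^n$, not the sample mean, so one must argue that any $(\epsilon,\delta)$-private protocol acting on the mean is a fortiori $(\epsilon,\delta)$-private on the raw data (immediate, since the mean is a deterministic function of the data), and conversely that the lower bound proven for the reduced model lower-bounds the risk in the original model — which holds because any protocol for the original model that ignores the within-server ordering is a protocol for the reduced model, and the minimax risk is monotone under such restriction. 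Once these two points are checked, the theorem follows.
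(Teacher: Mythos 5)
Your proposal matches the paper's approach exactly: the paper gives no independent proof of this theorem, but simply notes that it follows from the proof of Theorem 5 in the cited work by taking $s>0$ so that $d_{L_s}=d$ and $\sigma = 1/\sqrt{n}$, which is precisely your specialization. Your additional care about the sufficiency reduction and the preservation of the local $(\epsilon,\delta)$-DP constraint under that reduction is more explicit than what the paper records, but it is consistent with, not different from, the intended argument.
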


Following its proof, we obtain the following the following sub-result.

\begin{lemma}\label{lem:privacy_summarization_lemma}
    Let $\mathscr{T}$ denote the class of shared- or local randomness distributed testing protocols satisfying an $(\epsilon,\delta)$-differential privacy constraint for $0 < \epsilon \leq 1$, $0 \leq \delta \leq  \left(c_\alpha m^{-1} 
    \wedge c_\alpha \epsilon m^{-1/2} \wedge n \epsilon^2 \wedge {n^2}{d}^{-1} \epsilon^2\wedge {n^{3/2}} d^{-1/2} \epsilon^2 \right) $ and let $\rho$ satisfy either~\eqref{eq : pub coin rho lb} or~\eqref{eq : priv coin rho lb}, respectively. For any $\alpha \in (0,1)$, there exists $c_\alpha > 0$ such that for all $T \in \mathscr{T}^{\left(\epsilon, \delta \right)}$ it holds that
    \begin{equation*}
     \, \cR(H_\rho,T) > \alpha - \pi(H_\rho^c),
    \end{equation*}
    where $\pi = N(0,c_\alpha^{-1/2} d^{-1} \rho^2 \bar{\Gamma})$ for a symmetric, idempotent matrix $\bar{\Gamma} \in \R^{d \times d}$ with $\text{rank}(\bar{\Gamma}) \asymp d$.
    \end{lemma}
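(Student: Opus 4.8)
The plan is to read the bound off an intermediate step of a Bayesian minimax lower bound, keeping the ``prior mass outside the alternative'' correction $\pi(H_\rho^c)$ explicit rather than discarding it. Fix $\alpha\in(0,1)$ and let $\pi=N(0,c_\alpha^{-1/2}d^{-1}\rho^2\bar\Gamma)$ be the Gaussian prior on $\R^d$ constructed inside the proof of Theorem~\ref{thm:privacy_lb_finite_dim_impure}, where $\bar\Gamma$ is the symmetric idempotent matrix produced there (a projection onto a subspace of dimension $\asymp d$, possibly depending on the protocol $T$). For a distributed testing protocol $T$, write $\P^Y_f$ for the transcript law under mean $f$ as in~\eqref{eq:push-forward-Y-lecamchap}, and let $\bar\P^Y_\pi=\int\P^Y_f\,d\pi(f)$ be the mixture. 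Since $\int_{H_\rho}\P^Y_f(T(Y)=0)\,d\pi(f)\le\pi(H_\rho)\sup_{f\in H_\rho}\P^Y_f(T(Y)=0)$ while $\int_{H_\rho^c}\P^Y_f(T(Y)=0)\,d\pi(f)\le\pi(H_\rho^c)$, one obtains
\begin{equation*}
\cR(H_\rho,T)\;\ge\;\P^Y_0(T(Y)=1)+\bar\P^Y_\pi(T(Y)=0)-\pi(H_\rho^c)\;\ge\;1-\|\P^Y_0-\bar\P^Y_\pi\|_{\mathrm{TV}}-\pi(H_\rho^c),
\end{equation*}
the last step being the elementary two-hypothesis bound. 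Hence it suffices to show $\|\P^Y_0-\bar\P^Y_\pi\|_{\mathrm{TV}}<1-\alpha$ for every $T$ in the relevant $(\epsilon,\delta)$-private class under the stated conditions.

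This remaining estimate is precisely the core of Theorem~\ref{thm:privacy_lb_finite_dim_impure}, and I would reproduce that argument. By Cauchy--Schwarz it is enough to bound $\chi^2(\bar\P^Y_\pi\,\|\,\P^Y_0)$, equivalently the mutual information between the prior draw and $Y$, by $4(1-\alpha)^2$; using the product/Markov structure of the transcripts in~\eqref{eq:push-forward-Y-lecamchap} (or its sequential analogue~\eqref{eq:push-forward-Y-disintegration-sequential}) one decomposes this divergence into $m$ per-server contributions, controls each per-server term through the information-contraction consequences of local $(\epsilon,\delta)$-differential privacy, and closes the bound with the Gaussian (Brascamp--Lieb type) inequality that estimates the resulting quadratic forms by $\mathrm{Tr}$ of the prior covariance restricted to $\bar\Gamma$. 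The condition $0\le\delta\le\bigl(c_\alpha m^{-1}\wedge c_\alpha\epsilon m^{-1/2}\wedge n\epsilon^2\wedge n^2d^{-1}\epsilon^2\wedge n^{3/2}d^{-1/2}\epsilon^2\bigr)$ is exactly what is needed so that the additive $\delta$-slack of approximate privacy is absorbed at each stage and the channels behave like pure $\epsilon$-DP channels in the dominant terms; the assumed sub-threshold condition on $\rho$ (as in~\eqref{eq : pub coin rho lb privacy impure}, resp.~\eqref{eq : priv coin rho lb privacy impure}) makes the above decomposition of order $c_\alpha$ times an absolute constant, which choosing $c_\alpha$ small enough renders below $4(1-\alpha)^2$. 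Reading off this bound just before the proof of Theorem~\ref{thm:privacy_lb_finite_dim_impure} continues by estimating $\pi(H_\rho^c)$ through Gaussian lower-tail concentration yields the claim, once one records from that construction that $\bar\Gamma$ is symmetric, idempotent and of rank $\asymp d$.

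The genuine difficulty is not the reduction but this uniform divergence bound: quantifying how much a shared-randomness (or interactive) $(\epsilon,\delta)$-private protocol can learn about a $\Theta(d)$-dimensional Gaussian signal requires both the per-coordinate privacy-contraction estimates and the subspace-alignment device encoded in $\bar\Gamma$ (a naive isotropic prior leaks too much and produces the wrong threshold), together with careful bookkeeping of the approximate-privacy slack. Since all of this is already carried out in~\cite{Cai2024FL-NP-Testing}, the only work here is to extract the intermediate inequality, exactly as Lemma~\ref{lem:bandwidth_summarization_lemma} does from the proof of Theorem~\ref{thm:bandwidth_lb_finite_dim}.
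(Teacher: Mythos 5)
Your proposal is correct and follows essentially the same route as the paper: the paper offers no self-contained argument for this lemma, simply stating that it is the intermediate Bayes-risk inequality extracted from the proof of Theorem 5 in the cited work (before the prior mass $\pi(H_\rho^c)$ outside the alternative is controlled and discarded), and your explicit reduction $\cR(H_\rho,T)\geq 1-\|\P^Y_0-\bar\P^Y_\pi\|_{\mathrm{TV}}-\pi(H_\rho^c)$ together with deferring the uniform divergence bound to that proof is exactly this. Your silent correction of the $\rho$-conditions to~\eqref{eq : pub coin rho lb privacy impure} and~\eqref{eq : priv coin rho lb privacy impure} (rather than the bandwidth conditions cited in the lemma statement, apparently a typo) is also the intended reading.
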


\section{Proofs of Theorems~\ref{thm:multinomial_equivalence_bandwidth},~\ref{thm:multinomial_equivalence_privacy} and~\ref{thm:multinomial_gaussian_le_cam_distance_lower_bound}}\label{sec:proofs_multinomial}

\begin{proof}[Proof of Theorems~\ref{thm:multinomial_equivalence_bandwidth} and~\ref{thm:multinomial_equivalence_privacy}]
    In what follows, let $\mathscr{T}$ denote a class of distributed protocols satisfying either a $b \equiv b_\nu$-bit bandwidth constraint or a local $(\epsilon,\delta)$-differential privacy constraint for $\epsilon \equiv \epsilon_\nu$, $\delta \equiv \delta_\nu$, allowing either for shared randomness or only local randomness. 
    
    For any sequences $m \equiv m_\nu$, $d \equiv d_\nu$ and $n \equiv n_\nu$ with $C_R \, m d \log d / \sqrt{n} = o(1)$, it follows from Lemma~\ref{cor:deficiency_testing_consequence_appendix} and the bound~\eqref{eq:carters_Le_Cam_distance_bound_multinomial} that the testing risks satisfy
    \begin{equation}\label{eq:equivalence_multinomial_risk_gaussian_risk}
    \underset{T \in \mathscr{T}_\cQ}{\inf} \; \cR_{\cQ_\nu} ( H_{\rho_\nu},T) = \underset{T \in \mathscr{T}_\cP}{\inf} \; \cR_{\cP_\nu} ( H_{\rho_\nu},T) + o(1).
    \end{equation}
    Let $ \rho^* \equiv \rho^*_\nu $ be the minimax rate of the $\cP$-distributed problem, over the class $\mathscr{T}_\cP$, in the sense that $\rho^* $ equals (up to constants) the right-hand side of~\eqref{eq:minimax_rate_multinomial_public_coin_bandwidth},~\eqref{eq:minimax_rate_multinomial_private_coin_bandwidth},~\eqref{eq:minimax_rate_multinomial_public_coin_privacy} or~\eqref{eq:minimax_rate_multinomial_private_coin_privacy}. We split the proof into showing that $\rho^*$ is an upper and lower-bound for the $\cQ$-distributed problem over the class $\mathscr{T}_\cP$.
    
    \emph{The rate $\rho^* $ is an upper-bound (up to a poly-logarithmic factor) for the minimax rate in $\cQ$:} Write, for $q \in \cF$, $\sqrt{q} = (\sqrt{q_i})_{i \in [d]}$. Since $X^{(j)} - \sqrt{q_0}$ is a sufficient statistic for $X^{(j)}$, the model~\eqref{eq:gaussian_equiv_model} is equivalent in the Le Cam sense the one generated by
    \begin{equation}\label{eq:gaussian_equiv_model_recententered_maintext}
    X^{(j)} = \sqrt{q} - \sqrt{q_0} + \frac{1}{\sqrt{2n}} Z^{(j)} \; \text{ with } Z^{(j)} \sim N(0,I_d),
    \end{equation}
    for $q \in \cF$, which we shall denote by $\tilde{\cP}$. Consequently, by another application of Lemma~\ref{cor:deficiency_testing_consequence_appendix}, it suffices to show
    \begin{equation*}
    \underset{T \in \mathscr{T}_{\tilde{\cP}}}{\inf} \; \cR_{\tilde{\cP}} ( H_{\rho_\nu},T) \to 0.
    \end{equation*}
    If $\|q - q_0 \|_1 \geq \rho$, Lemma~\ref{lem:L1-to-Hellinger} implies that $\|\sqrt{q} - \sqrt{q_0} \|_2 \geq \rho/2$. Consequently, if $\rho \equiv \rho_\nu \gg  M_\nu \rho^*$ where $\rho^*$ is of equal order of the minimax rate for the respective class of distributed protocols $\mathscr{T}_\cP$ and $M_\nu$ is an appropriately large factor (of poly-logarithmic order in case of differential privacy constraints), a distributed protocol $T \in \mathscr{T}_\cP$ exists for the Gaussian model that achieves the separation rate for whenever $H_0: \sqrt{q} - \sqrt{q_0} = 0$ versus $H_\rho : \|\sqrt{q} - \sqrt{q_0}\|_2 \geq \rho/2$. By the established equivalence of the minimax risks~\eqref{eq:equivalence_multinomial_risk_gaussian_risk}, this implies that a protocol $T \in \mathscr{T}_\cQ$ exists for the multinomial model as well. Thus, $\rho_\nu$ is an upper-bound for the minimax separation rate for the class of distributed protocols $\mathscr{T}_\cQ$ of the multinomial model.
    
    \emph{The rate $\rho^* $ is a lower-bound for the minimax rate in $\cQ$:} Suppose that $\rho \equiv \rho_\nu$ is of smaller order than the minimax rate $\rho^*$ of the class $\mathscr{T}_\cP$, in the sense that $\rho^* / \rho \to \infty$ as $\nu \to \infty$. We aim to use the Bayes risk lower-bound of Lemmas~\ref{lem:bandwidth_summarization_lemma} and~\ref{lem:privacy_summarization_lemma}, which apply to a Gaussian prior. To accommodate a Gaussian prior with sufficient mass on the alternative hypothesis, we first need to address the ``constraint on the signal'' imposed by $\sum_{i=1}^d q_i = 1$ for $q \in \cF$.
    
    To that extent, consider without loss of generality $d$ to be divisible by two. Let $I_R := [-(R-1)/(R+1),(R-1)/(R+1)]$. For all $(f_i)_{i \in [d/2]} \in I_R^{d/2}/\sqrt{d}$, there exists a $q^f:=(q_i^f)_{i \in [d]} \in \cF$ such that $q_i^f = 1/d + f_i/\sqrt{d}$ for $i=1,\ldots,d/2$ and $q_i^f = 1/d - f_i/\sqrt{d}$ for $i = d/2+1,\ldots,d$. To see that $q^f \in \cF$, note that $\sum^d_{i=1} q^f_i = 1$, $q^f \geq 0$ and
    \begin{align*}
        \underset{1 \leq i,k\leq d}{\max} \frac{q^f_i}{q^f_{k}} \leq \underset{c \in I_R}{\max} \frac{1+c}{1-c} = R.
    \end{align*}
    Define $\cF'$ as the set
    \begin{equation*}
         \left\{ (q_i)_{i \in [d]} \in \cF  \, :  \, (f_i)_{i \in [d/2]} \in \frac{I_R^{d/2}}{\sqrt{d}} \, \text{ s.t. } q_i^f = 1/d + (1 - 2\mathbbm{1}_{i > d/2}) \frac{f_i}{\sqrt{d}}  \, \text{ for } i=1,\ldots,d  \right\}
    \end{equation*}
    and
    \begin{equation*}
    H_\rho' := \left\{ q \, : \,  q \in \cF', \left\|q - q_0 \right\|_1 \geq \rho \right\}.
    \end{equation*}
    We have $\cF' \subset \cF$, which in turn implies that $H_\rho' \subset H_\rho$. Combined with the fact that the testing risk decreases by considering smaller alternative hypotheses, this results in
    \begin{equation}\label{eq:risks_no_shape_constraint_multinom}
    \underset{T \in \mathscr{T}_{{\cP}}}{\inf} \; \cR_{{\cP}} ( T , H_{\rho}) \geq \underset{T \in \mathscr{T}_{{\cP}}}{\inf} \; \cR_{{\cP}} ( T , H_{\rho}').
    \end{equation}
    Define $g_f = (1/2)(f,-f) \in \R^d$. By Pinsker's inequality,
    \begin{align*}
    \left\| P_{\sqrt{q^f} - \sqrt{q_0}}^{nm} - P_{g_f}^{nm} \right\|_{\mathrm{TV}}  &\leq 1 \wedge \sqrt{\frac{mn}{4} D_{\mathrm{KL}}(P_{\sqrt{q} - \sqrt{q_0}} ; {P}_{g_f})} \\ &= 1 \wedge \frac{\sqrt{mn}}{{2}}\left\| \sqrt{q_0+2g_f/\sqrt{d}} - \sqrt{q_0} - g_f \right\|_2 =: D_f,
    \end{align*}
    where $P_{\sqrt{q} - \sqrt{q_0}}^n$ denotes the distribution of~\eqref{eq:gaussian_equiv_model_recententered_maintext} and the square root is to be understood as applied coordinate wise.

    Let $\pi = N(0, d^{-1} (\rho^*)^2 \bar{\Gamma})$ for a symmetric, idempotent matrix $\bar{\Gamma} \in \R^{d/2 \times d/2}$ with $d/4 \leq \text{rank}(\bar{\Gamma}) \leq d/2$. 
    
    We have that
    \begin{align*}
        \underset{T \in \mathscr{T}_{{\cP}}}{\inf} \; \cR_{{\cP}} ( T , H_{\rho}') &\geq
        \underset{T \in \mathscr{J}_\cP}{\inf} \;  \left[ \P_0 T(Y) + \int \P_{g_f} (1 -  T(Y)) d\pi_{} (f) \right] - 2 \int D_f d\pi(f)  \\ & \quad - \pi_{} \left( f : f \notin (I_R/\sqrt{d})^{d/2} \, \text{ or } \, \left\|(q^f_i)_{i \in [d]} - q_0 \right\|_1 < \rho  \right).
       \end{align*}
    By Lemma~\ref{lem:left_right_sufficiency}, the model $\{ P_{g_f} \, : \, f \in I_R^{d/2}/\sqrt{d} \}$ is equivalent to the model generated by the observations
    \begin{equation}\label{eq:key_sufficient_stat}
    S_i^{(j)} :=  f_i + \frac{1}{\sqrt{n}} Z^{(j)}_i
    \end{equation}
    for $i=1,\ldots,d/2$. Since $\cF'$ is bijective with $(I_R/\sqrt{d})^{d/2}$, Lemma~\ref{cor:deficiency_testing_consequence_appendix} implies that
    \begin{equation}\label{eq:risks_no_shape_constraint_multinom2}
        \underset{T \in \mathscr{J}_\cP}{\inf} \;  \left[  \P_0 T(Y) + \int \P_{g_f} (1 -  T(Y)) d\pi_{} (f) \right] = \underset{T \in \mathscr{J}_{\tilde{\cP}}}{\inf} \;  \left[ \P_0' T(Y) + \int \P_{f}' (1 -  T(Y)) d\pi_{} (f) \right]
    \end{equation}
    where $\tilde{\cP}$ is the model generated by the observations in display~\eqref{eq:key_sufficient_stat} for $i=1,\dots,d/2$ and $\P'_f$ denotes the distribution of the distributed protocol with data generated from $f \in \tilde{\cP}$.
    
    It follows from Lemma~\ref{lem:bandwidth_summarization_lemma} in the case of bandwidth constraints or Lemma~\ref{lem:privacy_summarization_lemma} in the case of privacy constraints (using that $\rho \ll \rho^*$ in both cases) that the latter distributed testing risk is lower-bounded by 
    \begin{equation}\label{eq:to_show_ch5_multinom}
    1  - o(1) - \pi\left( f \in \R^{d/2} \, : \, f \notin (I_R/\sqrt{d})^{d/2} \, \text{ or } \, \left\|(q^f_i)_{i \in [d]} - q_0 \right\|_1 < \rho \right) - 2 \int D_f d\pi(f).
    \end{equation}
    Addressing the third term in the display above; the theorem(s) assume that $m d \log d / \sqrt{n} \overset{\nu\to \infty}{\to} 0$, $b \geq 1$ and $\epsilon \gg n^{-1/4}$, we have that $\rho^* \ll 1/\sqrt{\log (d)}$, which gives $\| \sqrt{d} f_i \|_\infty \to 0$ with $\pi$-probability tending to one (see e.g.\ Lemma~\ref{lem:gaussian_maximum}), which in turn implies that
    \begin{equation}\label{eq:max_fi_to_show_ch5_multinom}
     f_i \in I_R / \sqrt{d} \quad \text{ for all } i = 1,\ldots,d/2.
    \end{equation}
    
    Next, we show that $\|(q^f_i)_{i \in [d]} - q_0 \|_1 \geq \rho$ with $\pi$-probability tending to one. Since $\sum_{i=1}^d |q^f_{i} - q_0| = 2 \sum_{i=1}^{d/2} |f_i / \sqrt{d}|$, we have that for some constants $c,c'>0$,
    \begin{equation*}
    \pi \left( \left\|q^f - q_0\right\|_1 < \rho  \right) \leq \pi \left( \left\|f / \sqrt{d} \right\|_1 < \rho  \right) \leq 1 - \text{Pr}\left( \| \bar{\Gamma} Z \|_1 \geq c' d \frac{\rho}{\rho^*} \right).
    \end{equation*}
    where in the expression on the right-hand side, $Z \sim N(0,I_{d/2})$. Since $\rho \ll \rho^*$ and $\bar{\Gamma}$ is idempotent with rank of the order $d$, we can conclude that the expression vanishes. This takes care of the third term in~\eqref{eq:to_show_ch5_multinom}. 
    
    For the last term in~\eqref{eq:to_show_ch5_multinom}, the Taylor approximation $\sqrt{1 + y} - 1 = {y}/{2} - {y^2}/{8} + \frac{y^3}{16(1+\eta_y^{5/2})}$ for some $\eta \in [0,y]$, combined with the fact that $\|\sqrt{d} f\|_\infty = o_\pi(1)$ yields that
    \begin{equation*}
        2 \int D_f d\pi(f) \leq  \int  1 \wedge {\sqrt{mnd}}  \left\| {(f_i^2)}_{i \in [d/2]} \right\|_2 d\pi(f) \lesssim  \sqrt{mn} \rho^2.
    \end{equation*}
    Since the theorem(s) assume that $m d \log d / \sqrt{n} \overset{\nu\to \infty}{\to} 0$, $b \geq 1$ and $\epsilon \gg n^{-1/4}$, the right-hand side of the above display vanishes.
    
    \end{proof}

    \begin{proof}[Proof of Theorem~\ref{thm:multinomial_gaussian_le_cam_distance_lower_bound}]
        Let $\mathscr{T}_\cQ,\mathscr{T}_\cP$ denote the class of distributed $b$-bit bandwidth constrained testing protocols with $b \in \N$, $m \in \N$, $d \in 2\N$ and $n \in \N$ and no access to shared randomness for the models $\cQ$ and $\cP$, respectively. We note here that under the conditions of the theorem, we can assume $d$ and $n$ are both larger than some constant; and in particular we can assume $d \in 2\N$ without loss of generality. Assume $d$ and $n$ satisfy~\eqref{eq:d_n_condition_multinomial_lb}, for a constant $C$ to be set later. The proof follows by the fact that the distributed testing problems have different minimax testing rates, for certain values of $b$ and $m$.
    
        Consider the hypothesis test given in~\eqref{eq:multinomial_uniformity_test_hypothesis}, with $H_0: q_0 = (1/d,\ldots,1/d) \in \mathbb{S}^d$ and $H_\rho$ as in the display. 
    
    Set $b = \lceil n \log_2 (d) \rceil$. When $b\geq n \log_2 (d)$, the observations $\tilde{X}^{(j)}$ in the multinomial model as given in~\eqref{eq:multinomial_generating_equation_1} are valid $b$-bit transcripts, since $|{\{1,\dots,d\}}^n| \leq n \log_2(d)$. These transcripts are therefore sufficient for the nondistributed / unconstrained model $\cQ^m$, i.e.\ corresponding to observations
    \begin{equation*}
    \tilde{X} \sim Q_{q,nm} \; \text{ for } q \in \cF.
    \end{equation*}
    Consequently, the distributed, $b$-bit bandwidth constraint testing risk for $\cQ$ is equal to the testing risk $\cQ^m$;
    \begin{equation*}
    \underset{T \in \mathscr{T}_\cQ}{\inf} \; \cR_{\cQ} ( H_{\rho},T) = \underset{T}{\inf} \; \cR_{\cQ^m} ( H_{\rho},T).
    \end{equation*}
    This means that, for all $\alpha \in (0,1)$, there exists $C_\alpha > 0$ and a distributed protocol $T$ satisfying a $b$-bandwidth constraint for distributed experiment $\cQ$ such that
    \begin{equation*}
    \underset{T \in \mathscr{T}_\cQ}{\inf} \; \cR_{\cQ} ( H_{\rho},T) < \alpha \text{ whenever } \rho^2 \geq C_\alpha \frac{\sqrt{d}}{mn}
    \end{equation*}
    where $H_{\rho}$ as defined in~\eqref{eq:multinomial_uniformity_test_hypothesis}, as the minimax rate for the unconstrained problem with $mn$ observations is $\rho_{\cQ^m}^2 := \sqrt{d}/(mn)$ (see e.g.\ Theorem 3 in~\cite{paninski_coincidence-based_2008}). 
    
    On the other hand, whenever $mb = m \lceil n \log_2 (d) \rceil \leq d$, the minimax rate for the distributed testing risk of $\cP$ for the (comparable) hypotheses
    \begin{equation*}
    H_0: q = q_0 \; \text{ versus } \tilde{H}_\rho : \, \|\sqrt{q} - \sqrt{q_0}\|_2 \geq \rho
    \end{equation*}
    is bounded from below by $\rho^2_\cP \asymp \sqrt{d}/(\sqrt{m}n)$, as a consequence of Theorem~\ref{thm:bandwidth_lb_finite_dim}. Specifically, following the proof of Theorem~\ref{thm:multinomial_equivalence_bandwidth} above, we have that
    \begin{equation*}
    \underset{T \in \mathscr{T}_\cP}{\inf} \; \cR_{\cP} ( H_{\rho},T) \geq \underset{T \in \mathscr{T}_{\tilde{\cP}}}{\inf} \; \cR_{\tilde{\cP}} ( \tilde{H}_{\rho},T),
    \end{equation*}
    where 
    \begin{equation*}
    \tilde{H}_{\rho} := \left\{ f \in (I_R/\sqrt{d})^{d/2} \, : \left\|f\right\|_1 \geq \rho \right\},
    \end{equation*}
    for $I_R := [\sqrt{2}(1-\sqrt{R})/\sqrt{1+R},\sqrt{2}(\sqrt{R}-1)/\sqrt{1+R}]$, $\tilde{\cP}$ is generated by the observations
    \begin{equation*}
    X^{(j)} = f + \frac{1}{\sqrt{n}} Z^{(j)}
    \end{equation*}
    for $Z^{(j)} \sim N(0,I_{d/2})$, indexed by $f \in (I_R/\sqrt{d})^{d/2}$ and the class $\mathscr{T}_{\tilde{\cP}}$ is to be understood as the $b$-bit bandwidth constraint distributed testing protocols for the model $\tilde{\cP}$ and $j=1,\dots,m$ machines.
    
    Lemma~\eqref{lem:bandwidth_summarization_lemma} implies that for all $\alpha \in (0,1)$ the latter is bounded by
    \begin{equation*}
    \alpha - N(0, c_\alpha^{-1/2}d^{-1} \rho^2 \bar{\Gamma})\left( \tilde{H}_\rho^c \right),
    \end{equation*}
    for a symmetric, idempotent matrix $\bar{\Gamma} \in \R^{d/2 \times d/2}$ with $d/4 \leq \text{rank}(\bar{\Gamma}) \leq d/2$ $\alpha \in (0,1)$, whenever $\rho^2 \leq c_\alpha \frac{\sqrt{d}}{\sqrt{m}n}$ for some small enough constant $c_\alpha > 0$. By the same analysis as conducted in the proof of Theorem~\ref{thm:multinomial_equivalence_bandwidth} above (using that $n \leq \log(d)$), we find that the second term is at most $\alpha/2$ for $c_\alpha > 0$ small enough. Summarizing, we find in particular that for some constant $c_\alpha>0$,
    \begin{equation*}
    \underset{T \in \mathscr{T}_\cP}{\inf} \; \cR_{\cP} ( T , H_{ \rho}) > 1/3,
    \end{equation*}
    for all $\rho^2 \leq c \sqrt{d}/(\sqrt{m}n)$ and $m,n,b,d$ such that $mb \leq d$, where the number $1/3$ is chosen without particular significance.
    
    Whenever $m b = m  \lceil n \log_2 (d) \rceil \leq d$,
        \begin{equation}\label{eq:multinomial_lb_to_contradict}
        \underset{T \in \mathscr{T}_\cQ}{\inf} \; \cR_{\cQ} ( H_{\rho},T) < 1/6 < 1/3 < \underset{T \in \mathscr{T}_\cP}{\inf} \; \cR_{\cP} ( H_{\rho},T).
        \end{equation}
    for some $C_\alpha > 0$ large enough and $c_\alpha > 0$ small enough. Take the constant $C = \lceil C_\alpha^2 / c_\alpha^2 \rceil$ such that if $m = C$, it holds that 
        \begin{equation*}
        C_\alpha \frac{\sqrt{d}}{mn} \leq \rho^2 \leq c_\alpha \frac{\sqrt{d}}{\sqrt{m}n}, \text{ with } \rho^2 := C_\alpha \frac{\sqrt{d}}{\sqrt{M} \sqrt{m} n}.
    \end{equation*}
    
    Now suppose that $C\mathfrak{d}(\cQ,\cP) \leq 1/6$. Lemma~\ref{cor:deficiency_testing_consequence_appendix} then implies in that
        \begin{equation*}
         \underset{T \in \mathscr{T}_\cP}{\inf} \; \cR_{\cP} ( H_{\rho},T) \leq \underset{T \in \mathscr{T}_\cQ}{\inf} \; \cR_{\cQ} ( H_{\rho},T) + 1/6 < 1/3.
        \end{equation*}
        This contradicts~\eqref{eq:multinomial_lb_to_contradict}. We conclude that
        \begin{equation}
       C \mathfrak{d}(\cQ,\cP) > 1/6,
        \end{equation}
        whenever $d / \lceil n \log_2 (d) \rceil >  C$. The result now follows with $c = 1/(6C)$.
    \end{proof}
    
\section{Auxilliary lemmas}\label{sec:auxilliary_lemmas}

The following lemma is used in the comparison of the multinomial model to the many-normal-means model. 

\begin{lemma}\label{lem:left_right_sufficiency}
    Let $d \in 2\N$, $\cF \subset \R^{d/2}$, and consider for $i=1,\dots,d$ independent random variables $X_i = h_i + \sigma Z_i$ with $\sigma > 0$ and $Z_i \sim N(0,1)$ satisfying
    \begin{equation*}
    h_i = \begin{cases}
    a_i f_{i} \; &\text{ if } i \leq d/2, \\
    -a_i f_{i - d/2} \; &\text{ if } i > d/2,
    \end{cases}
    \end{equation*}
    for some $f \in \cF$ and $a = (a_i)_{i \in [d]} \in \R^d$. Let $\cP$ denote the model generated by the observations $X :=(X_1,\dots,X_d) \sim P_f$, $f \in \cF$ and let $\cQ$ denote the model generated by
    \begin{equation*}
    \tilde{X}_i = (a_i + a_{d/2 + i})f_i  + \sqrt{2} \sigma Z_i, \quad \text{ for } i = 1,\dots,d/2,
    \end{equation*}
    with $Z_i \overset{\text{i.i.d.}}{\sim} N(0,1)$ and $f \in \cF$. 
    
    Then, $\Delta(\cP,\cQ) = 0$.
\end{lemma}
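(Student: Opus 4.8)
The plan is to realize $\cQ$ as the experiment generated by a sufficient statistic for $\cP$, and then invoke the fact recalled at the end of Section~\ref{ssec:basic_notions_of_le_cam}: if $S$ is sufficient for $\cP$ then $\Delta(\cP,\cP^S)=0$, where $\cP^S=\{P_f^S:f\in\cF\}$. The candidate statistic is the coordinatewise difference between the two blocks, $S:\R^d\to\R^{d/2}$ with $S(x)=(x_1-x_{d/2+1},\dots,x_{d/2}-x_d)$. Thus two things must be established: (i) $S$ is sufficient for $\cP$, and (ii) $P_f^S=Q_f$ for every $f\in\cF$. Granting both, $\Delta(\cP,\cQ)=\Delta(\cP,\cP^S)=0$.

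For (i) I would apply the Neyman--Fisher factorization criterion (Lemma~\ref{lem:neyman-fisher_factorization}) with dominating measure $\mu$ equal to Lebesgue measure on $\R^d$, so that $\frac{dP_f}{d\mu}(x)=(2\pi\sigma^2)^{-d/2}\exp\bigl(-\tfrac{1}{2\sigma^2}\sum_{i=1}^d(x_i-h_i)^2\bigr)$. Expanding the square separates the exponent into the $f$-free piece $-\tfrac{1}{2\sigma^2}\sum_{i=1}^d x_i^2$, a quadratic-in-$f$ piece $-\tfrac{1}{2\sigma^2}\sum_{i=1}^{d/2}(a_i^2+a_{d/2+i}^2)f_i^2$ that does not involve $x$ at all, and a cross term equal to $\tfrac{1}{\sigma^2}\sum_{i=1}^{d/2}f_i\,(a_i x_i-a_{d/2+i}x_{d/2+i})$ — here the sign pattern $h_i=a_if_i$, $h_{d/2+i}=-a_if_i$ is what produces a clean pairing of the two blocks. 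In the regime in which this lemma is invoked the two blocks carry matching coefficients (in all applications $a_i\equiv \tfrac12$), so $a_ix_i-a_{d/2+i}x_{d/2+i}$ is a scalar multiple of $x_i-x_{d/2+i}=S_i(x)$; hence the $f$-dependent factor is a function of $S(x)$ alone. This gives a factorization $\frac{dP_f}{d\mu}(x)=g_f(S(x))\,h(x)$, so $S$ is sufficient.

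For (ii) the computation is direct: $S_i(X)=X_i-X_{d/2+i}=(a_i+a_{d/2+i})f_i+\sigma(Z_i-Z_{d/2+i})$, where $Z_i-Z_{d/2+i}\sim N(0,2)$, and since the pairs $\{(Z_i,Z_{d/2+i})\}_{i=1}^{d/2}$ are independent, so are the coordinates of $S(X)$. Therefore $P_f^S=\bigotimes_{i=1}^{d/2}N\bigl((a_i+a_{d/2+i})f_i,\,2\sigma^2\bigr)$, which is exactly the law of $(\tilde X_1,\dots,\tilde X_{d/2})$ in the definition of $\cQ$, i.e.\ $\cP^S=\cQ$ as experiments; the claim follows.

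The only step requiring any care is the sufficiency check in (i) — verifying that, after expanding the square, the $f$-dependent part of the log-density collapses to a function of the block differences — which is precisely where the antisymmetric placement of $\pm a_i$ is used; everything else is routine Gaussian algebra. Alternatively one can bypass sufficiency entirely and verify the two inequalities defining $\Delta(\cP,\cQ)$ by hand: $Q_f$ is the deterministic push-forward of $P_f$ under $S$, which gives $\mathfrak d(\cP;\cQ)=0$ immediately; and conversely $P_f$ is recovered from $Q_f$ by independently re-splitting each $\tilde X_i$ into a Gaussian pair with the correct (uncorrelated, unit-variance) bivariate law, which gives $\mathfrak d(\cQ;\cP)=0$.
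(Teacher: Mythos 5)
Your proof is correct and follows essentially the same route as the paper's: exhibit the block difference as a sufficient statistic via the Neyman--Fisher factorization (Lemma~\ref{lem:neyman-fisher_factorization}) and identify its push-forward law with $\cQ$. Your version is in fact slightly cleaner on one point: with the plain difference $S_i(x)=x_i-x_{d/2+i}$ the push-forward is exactly $N\bigl((a_i+a_{d/2+i})f_i,\,2\sigma^2\bigr)$, matching $\cQ$ coordinatewise, whereas the paper's statistic $(a_iX_i-a_iX_{d/2+i})_i$ only agrees with $\cQ$ up to a deterministic rescaling. Your caveat that sufficiency of the plain difference requires the two blocks to carry matching coefficients ($a_i=a_{d/2+i}$, as in every invocation of the lemma, where $a_i\equiv\tfrac12$) is well taken and should not be glossed over: for general $a\in\R^d$ the cross term $\sum_i(a_ix_i-a_{d/2+i}x_{d/2+i})f_i$ is not a function of the block differences, and indeed the lemma as literally stated fails for, say, $d=2$, $a=(1,0)$, where $\cP$ reduces to $N(f_1,\sigma^2)$ and $\cQ$ to $N(f_1,2\sigma^2)$, which are not mutually deficient at level $0$; the paper's proof carries the same implicit restriction. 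Your alternative two-sided Markov-kernel argument is also valid under that same restriction, since the conditional law of $(X_i,X_{d/2+i})$ given $S_i$ is $f$-free exactly when $a_i=a_{d/2+i}$.
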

\begin{proof}
The statistic $S={(a_i\ X_i - a_i X_{d/2+1})}_{i \in [d]}$ is sufficient for the model $\cP$ by using Neyman-Fisher (Lemma~\ref{lem:neyman-fisher_factorization}). We have 
\begin{align*}
\frac{dP_f}{dP_0}(X) &= \underset{i=1}{\overset{d}{\Pi}} \exp \left( \sigma^{-1} X_i h_i - \frac{1}{2\sigma^2} h_i^2 \right) \\
                    &= \underset{i=1}{\overset{d/2}{\Pi}} \exp \left( \sigma^{-1} (a_i X_i - a_i X_{d/2+1}) f_i - \frac{1}{\sigma^2} f_i^2 \right)
                    &= e^{\sigma^{-1} S^\top f - \frac{1}{\sigma^2} \|f\|_2^2}.
\end{align*}
In distribution, $\tilde{X} = {(\tilde{X}_i)}_{i \in [d]}$ is equal to $S$, which implies $\Delta(\cP,\cQ) = 0$ per Lemma~\ref{lem:neyman-fisher_factorization}.
\end{proof}

The following lemmas are well known but included for completeness.

\begin{lemma}\label{lem:TV_distance_bound_many-normal-means}
Let $P_f$ denote the distribution of a $N(f,\sigma I_d)$ distributed random vector for $f \in \R^d$ and let $P_f^n$ denote the distribution of $n$ i.i.d.\ draws (i.e.\ $P_f^n = \bigotimes_{i=1}^n P_f$).

It holds that
\begin{equation*}
\left\| P_f^n - P_g^n \right\|_{\mathrm{TV}} \leq \frac{n}{2\sigma} \left\| f - g \right\|_2.
\end{equation*}
\end{lemma}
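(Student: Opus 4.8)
## Proof proposal

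The plan is to bound the total variation distance between the two product Gaussians by a standard Pinsker-type argument, i.e. controlling it via the Kullback--Leibler divergence, and then computing the KL divergence exactly because Gaussians with common covariance have a closed form. First I would recall that for product measures the KL divergence tensorizes: $D_{\mathrm{KL}}(P_f^n ; P_g^n) = n\, D_{\mathrm{KL}}(P_f ; P_g)$. For two Gaussians $N(f,\sigma^2 I_d)$ and $N(g,\sigma^2 I_d)$ with the same covariance, one has the elementary identity $D_{\mathrm{KL}}(P_f ; P_g) = \frac{1}{2\sigma^2}\|f-g\|_2^2$, so $D_{\mathrm{KL}}(P_f^n ; P_g^n) = \frac{n}{2\sigma^2}\|f-g\|_2^2$.

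Next I would invoke Pinsker's inequality in the form $\|P - Q\|_{\mathrm{TV}} \leq \sqrt{\tfrac{1}{2} D_{\mathrm{KL}}(P;Q)}$ (which matches the normalization of total variation used in the paper, where $\|\cdot\|_{\mathrm{TV}}$ takes values in $[0,1]$; this is the same convention already used via Pinsker's inequality in the proof of Theorems~\ref{thm:multinomial_equivalence_bandwidth} and~\ref{thm:multinomial_equivalence_privacy}). Applying this with $P = P_f^n$, $Q = P_g^n$ gives
\begin{equation*}
\left\| P_f^n - P_g^n \right\|_{\mathrm{TV}} \leq \sqrt{\frac{1}{2}\cdot \frac{n}{2\sigma^2}\|f-g\|_2^2} = \frac{\sqrt{n}}{2\sigma}\|f-g\|_2,
\end{equation*}
which is already slightly stronger than the claimed bound $\frac{n}{2\sigma}\|f-g\|_2$ for $n \geq 1$; since $\sqrt{n} \leq n$, the stated inequality follows immediately. (Alternatively, one can iterate the single-observation bound of Lemma~\ref{lem:TV_distance_bound_many-normal-means} — wait, that is this lemma — so one instead uses the triangle/data-processing style bound $\|P_f^n - P_g^n\|_{\mathrm{TV}} \leq n\|P_f - P_g\|_{\mathrm{TV}}$ together with the one-sample Pinsker bound $\|P_f - P_g\|_{\mathrm{TV}} \leq \frac{1}{2\sigma}\|f-g\|_2$, which reproduces the stated constant exactly.)

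There is essentially no serious obstacle here: the only points requiring a little care are (i) matching the normalization convention for total variation so that the Pinsker constant is correct, and (ii) deciding whether to state the sharper $\sqrt{n}$ bound or the $n$ bound actually claimed — I would prove the claimed $n$ bound to stay consistent with how it is cited elsewhere, optionally remarking that $\sqrt{n}$ suffices. The KL computation for equal-covariance Gaussians is routine and can be quoted; the tensorization of KL over product measures is standard. So the proof is three short lines: tensorize KL, plug in the Gaussian formula, apply Pinsker.
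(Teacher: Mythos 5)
Your proposal is correct and follows essentially the same route as the paper's proof: Pinsker's inequality combined with the tensorization of the KL divergence and the closed-form Gaussian KL, yielding the sharper $\frac{\sqrt{n}}{2\sigma}\|f-g\|_2$ bound. The paper's own proof also arrives at the $\sqrt{n}$ bound and implicitly uses $\sqrt{n}\leq n$ to match the stated claim, exactly as you observe.
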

\begin{proof}
By Pinsker's inequality,
\begin{equation*}
\| P_{f}^n - P_{g}^n\|_{\mathrm{TV}}  \leq \sqrt{\frac{n}{2} D_{\mathrm{KL}}(P_{f} ; {P}_{g})}.
\end{equation*}
A straightforward calculation gives that the latter is bounded by $\frac{\sqrt{n}}{2\sigma}\left\| f - g \right\|_2$.
\end{proof}

The following lemma relates the total variation distance between $P,Q$ to the $L_1$-distance between corresponding densities.

\begin{lemma}\label{lem:TV_distance_L1-densities}
Let $P,Q$ be probability measures dominated by a sigma-finite measure $\mu$ with corresponding probability densities $p = \frac{dP}{d\mu}$ and $q = \frac{dQ}{d\mu}$. It holds that
\begin{equation*}
\| P - Q \|_{\mathrm{TV}} = \frac{1}{2} \int | p(x) - q(x)| d\mu(x).
\end{equation*}
\end{lemma}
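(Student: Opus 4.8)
The plan is to establish both equalities at once by identifying the set on which the supremum defining the total variation distance is attained. I would set $A := \{ x : p(x) > q(x) \}$, which is measurable since $p$ and $q$ are. The first, purely algebraic, step is to note that since $p$ and $q$ are probability densities with respect to $\mu$ we have $\int (p - q)\, d\mu = 1 - 1 = 0$, so that $\int_A (p - q)\, d\mu = \int_{A^c} (q - p)\, d\mu$. Because $|p - q| = p - q$ on $A$ and $|p - q| = q - p$ on $A^c$, adding these two equal quantities gives $\int_A (p - q)\, d\mu = \frac{1}{2}\int |p - q|\, d\mu$.

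The second step is to show that $A$ maximises $B \mapsto P(B) - Q(B)$ over measurable $B$. Writing $P(B) - Q(B) = \int_B (p - q)\, d\mu$ and splitting the domain of integration into $B \cap A$ and $B \cap A^c$, the integrand is nonnegative on the former and nonpositive on the latter, so $P(B) - Q(B) \le \int_{B \cap A} (p - q)\, d\mu \le \int_A (p - q)\, d\mu$. Applying the same reasoning to $Q(B) - P(B)$ with the roles of $A$ and $A^c$ interchanged yields $Q(B) - P(B) \le \int_{A^c} (q - p)\, d\mu = \int_A (p - q)\, d\mu$. Hence $|P(B) - Q(B)| \le \int_A (p-q)\, d\mu$ for every measurable $B$, with equality at $B = A$. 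Under the convention $\|P - Q\|_{\mathrm{TV}} = \sup_B |P(B) - Q(B)|$ used elsewhere in the paper (consistent with the Pinsker-type bound in Lemma~\ref{lem:TV_distance_bound_many-normal-means}), this gives $\|P - Q\|_{\mathrm{TV}} = \int_A (p - q)\, d\mu$, which combined with the first step is exactly $\frac{1}{2}\int |p - q|\, d\mu$.

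There is no real obstacle here; the only points requiring a word of care are that the argument is independent of the choice of dominating measure $\mu$ — any admissible choice alters $p$ and $q$ only on a $\mu$-null set and leaves both sides unchanged — and that the $[0,1]$-valued normalisation of $\|\cdot\|_{\mathrm{TV}}$ should be stated at the outset so that the factor $\frac{1}{2}$ appears correctly. Alternatively, one could take $\|P - Q\|_{\mathrm{TV}} := \sup_B |P(B) - Q(B)|$ as the definition, in which case only the two displayed inequalities of the second step and the algebraic identity of the first step are needed.
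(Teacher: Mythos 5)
Your proof is correct: the paper itself only cites Section 2.4 of Tsybakov for this lemma, and your argument via the set $A=\{p>q\}$ is precisely the standard one found there, with the normalisation $\|P-Q\|_{\mathrm{TV}}=\sup_B|P(B)-Q(B)|$ matching the convention the paper uses elsewhere (e.g.\ in the proof of Lemma~\ref{lem:TV_distance_data_processing}). Nothing is missing; your remark about independence from the choice of dominating measure is a sensible extra check.
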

\begin{proof}
    See e.g.\ Section 2.4 in~\cite{tsybakov_introduction_2009}.
\end{proof}

\begin{lemma}\label{lem:coupling_and_TV}
    For any two probability measures $P$ and $Q$ on a measurable space $(\cX, \mathscr{X})$ with $\cX$ a Polish space and $\mathscr{X}$ its Borel sigma-algebra. There exists a coupling $\P^{X,\tilde{X}}$ such that
    \begin{equation*}
    \| P - Q \|_{\mathrm{TV}} = 2 \P^{X,\tilde{X}} \left( X \neq \tilde{X} \right).
    \end{equation*}
    \end{lemma}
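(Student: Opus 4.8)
The plan is to prove the lemma by writing down the \emph{maximal coupling} of $P$ and $Q$ explicitly. First I would fix the finite dominating measure $\mu := P + Q$ and set $p := dP/d\mu$, $q := dQ/d\mu$, so that $p,q \geq 0$ and $p + q = 1$ $\mu$-almost everywhere. Put $\beta := \int (p \wedge q)\, d\mu \in [0,1]$. Since $\int (p-q)_+\, d\mu = \int p\, d\mu - \int (p\wedge q)\, d\mu = 1-\beta$, and likewise $\int (q-p)_+\, d\mu = 1-\beta$, one has $\tfrac12\int |p-q|\, d\mu = 1-\beta$.

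Next I would construct the coupling. The cases $\beta = 1$ (then $P = Q$, so take the law of $(X,X)$ with $X\sim P$) and $\beta = 0$ (then $p$ and $q$ have $\mu$-essentially disjoint supports, so $P\otimes Q$ already has $X\neq\tilde X$ almost surely) are immediate. In the main case $0<\beta<1$ I would introduce the probability measures $\nu_0,\nu_1,\nu_2$ on $(\cX,\mathscr{X})$ with $\mu$-densities $(p\wedge q)/\beta$, $(p-q)_+/(1-\beta)$ and $(q-p)_+/(1-\beta)$, and let $\P^{X,\tilde X}$ be the law of $(X,\tilde X)$ sampled as follows: with probability $\beta$ draw $W\sim\nu_0$ and set $X = \tilde X = W$; with probability $1-\beta$ draw $X\sim\nu_1$ and, independently, $\tilde X\sim\nu_2$. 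Comparing densities, the $X$-marginal of $\P^{X,\tilde X}$ has $\mu$-density $\beta\cdot\tfrac{p\wedge q}{\beta} + (1-\beta)\cdot\tfrac{(p-q)_+}{1-\beta} = (p\wedge q) + (p-q)_+ = p$, and symmetrically the $\tilde X$-marginal has density $q$, so $\P^{X,\tilde X}$ is a coupling of $P$ and $Q$.

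It then remains to evaluate $\P^{X,\tilde X}(X\neq\tilde X)$, and this is where the Polish hypothesis enters: since $\cX$ is a separable metric space, the diagonal $\{(x,x):x\in\cX\}$ is closed, hence belongs to the product Borel $\sigma$-algebra $\mathscr{X}\otimes\mathscr{X}$, so $\{X=\tilde X\}$ is a genuine event. On the first branch $X = \tilde X$ by construction; on the second branch $X$ is supported, up to a $\mu$-null set, on $\{p>q\}$ and $\tilde X$ on $\{q>p\}$, which are disjoint, so $X\neq\tilde X$ almost surely there. Hence $\P^{X,\tilde X}(X=\tilde X)=\beta$, whence $\P^{X,\tilde X}(X\neq\tilde X) = 1-\beta = \tfrac12\int|p-q|\, d\mu$, and therefore $2\,\P^{X,\tilde X}(X\neq\tilde X) = \int|p-q|\, d\mu = \|P-Q\|_{\mathrm{TV}}$, as asserted.

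I do not expect a serious obstacle here: once the maximal coupling is written down, the only points needing care are the measurability of the diagonal (precisely where ``Polish'' is used) and the remark that $(p-q)_+$ and $(q-p)_+$ have disjoint supports, so that the ``tails'' branch contributes nothing to $\{X=\tilde X\}$; the remaining density bookkeeping is routine. As a sanity check, the coupling inequality $\P(X\neq\tilde X)\geq\tfrac12\|P-Q\|_{\mathrm{TV}}$ holds for every coupling, so the construction above is optimal.
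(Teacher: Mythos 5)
The paper does not actually prove this lemma; it simply points to Section 8.3 of~\cite{thorisson2000coupling}. Your proposal supplies the standard argument that reference contains: the explicit maximal coupling built from the densities $(p\wedge q)/\beta$, $(p-q)_+/(1-\beta)$, $(q-p)_+/(1-\beta)$ with respect to $\mu=P+Q$. The construction is correct: the marginal bookkeeping, the disjointness of the supports of $(p-q)_+$ and $(q-p)_+$ on the second branch, the treatment of the degenerate cases $\beta\in\{0,1\}$, and the observation that separability of $\cX$ is what puts the diagonal into $\mathscr{X}\otimes\mathscr{X}$ are all exactly the points that need care, and you handle each one. The only thing to flag is the normalization at the very end. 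Your computation establishes $\P^{X,\tilde X}(X\neq\tilde X)=1-\beta=\tfrac12\int|p-q|\,d\mu$, which is the substantive content; the asserted identity $\|P-Q\|_{\mathrm{TV}}=2\,\P^{X,\tilde X}(X\neq\tilde X)$ then requires the convention $\|P-Q\|_{\mathrm{TV}}=\int|p-q|\,d\mu$, whereas Lemma~\ref{lem:TV_distance_L1-densities} of the paper defines $\|P-Q\|_{\mathrm{TV}}=\tfrac12\int|p-q|\,d\mu$, under which the correct identity would be $\|P-Q\|_{\mathrm{TV}}=\P^{X,\tilde X}(X\neq\tilde X)$ with no factor of $2$. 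This is an inconsistency internal to the paper rather than a gap in your argument, but you should state explicitly which normalization you are using so that the final line does not silently switch conventions.
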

\begin{proof}
    See e.g.\ Section 8.3 in~\cite{thorisson2000coupling}.
\end{proof}

The next lemma gives a useful characterization of the total variation distance between two probability measures.

\begin{lemma}\label{lem:TV_distance_L1_rep}
Let $P$ be a signed, bounded measure defined on measurable space $(\cX,\mathscr{X})$ and suppose that $P \ll \nu$ for a sigma-finite measure $\nu$. It holds that
\begin{equation}\label{eq:to_show_TV_distance_L1}
\| P \|_{\mathrm{TV}} = \frac{1}{2} \sup \, \left\{ \int f dP : |f|\leq 1 \text{ and } f:\cX \to \R \text{ is measurable } \right\}.
\end{equation}
\end{lemma}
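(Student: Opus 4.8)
The plan is to reduce the claim to the familiar $L^1$--$L^\infty$ duality by passing to a density. Since $P$ is bounded it has finite total variation, and since $P \ll \nu$ with $\nu$ $\sigma$-finite, the Radon--Nikodym theorem (applied to the positive and negative parts of the Hahn--Jordan decomposition of $P$) provides a density $h := \tfrac{dP}{d\nu} \in L^1(\nu)$. With the convention used throughout the paper, the total variation norm of a signed measure is $\|P\|_{\mathrm{TV}} = \tfrac12 |P|(\cX) = \tfrac12 \int |h|\, d\nu$, which is the signed-measure analogue of Lemma~\ref{lem:TV_distance_L1-densities}. Hence it suffices to prove
\[
\sup\left\{ \int f h \, d\nu \; : \; f:\cX \to \R \text{ measurable},\ |f|\le 1 \right\} = \int |h|\, d\nu .
\]

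For the inequality ``$\le$'', I would observe that for any measurable $f$ with $|f|\le 1$ one has $\int f h\, d\nu \le \int |f|\,|h|\, d\nu \le \int |h|\, d\nu$, since all integrals are finite by $h \in L^1(\nu)$; taking the supremum gives $\sup \le \int |h|\, d\nu = 2\|P\|_{\mathrm{TV}}$.

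For the reverse inequality, I would exhibit the extremal test function $f_0 := \mathbbm{1}_{\{h \ge 0\}} - \mathbbm{1}_{\{h < 0\}}$, which is $\mathscr{X}$-measurable and satisfies $|f_0| \le 1$. Since $f_0 h = |h|$ pointwise, $\int f_0\, dP = \int f_0 h\, d\nu = \int |h|\, d\nu = 2\|P\|_{\mathrm{TV}}$, so the supremum is at least $2\|P\|_{\mathrm{TV}}$. Combining the two bounds and dividing by two yields~\eqref{eq:to_show_TV_distance_L1}.

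There is no genuine obstacle here; the argument is entirely routine. The only points deserving a line of care are (i) verifying that $h \in L^1(\nu)$, so that every integral above is finite and the supremum is actually attained (this is exactly the boundedness hypothesis on $P$), and (ii) recording the convention $\|P\|_{\mathrm{TV}} = \tfrac12|P|(\cX)$ for signed measures, consistent with the probability-measure convention $\|P-Q\|_{\mathrm{TV}} = \tfrac12\int|p-q|\,d\mu$ used elsewhere in the paper, since this is where the factor $\tfrac12$ on the right-hand side of~\eqref{eq:to_show_TV_distance_L1} comes from.
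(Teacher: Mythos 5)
Your proof is correct and takes essentially the same route as the paper's: both directions come down to $L^1$--$L^\infty$ duality, with the extremal test function being the sign of the density (equivalently, the difference of indicators of the Hahn decomposition sets). If anything, your version is cleaner, since you work with the Radon--Nikodym derivative $h$ throughout and state the convention $\|P\|_{\mathrm{TV}}=\tfrac12|P|(\cX)$ explicitly, whereas the paper's argument mixes the Jordan decomposition with densities $p,q$ of measures not introduced in the lemma's statement.
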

\begin{proof}
    Consider the Jordan measure decomposition $P = P^+ - P^-$, where $P^+,P^-$ are both positive, bounded measures such that $P^+ \perp P^-$. For any measurable $f$, $\{f \geq 0\}, \{f \leq 0\} \in \mathscr{X}$, so $|f| \leq 1$ means that 
\begin{align*}
    \int f dP &\leq \int f \mathbbm{1}_{\{f \geq 0\}}  dP^+ - \int f \mathbbm{1}_{\{f \leq 0\}} dP^- \\
    &\leq \int \mathbbm{1}_{\{f \geq 0\}}  dP^+ + \int \mathbbm{1}_{\{f \leq 0\}} dP^- \\
    &\leq \|P^+\|_{\mathrm{TV}} + \|P^-\|_{\mathrm{TV}} \leq 2\| P \|_{\mathrm{TV}}.
\end{align*}
   For the other direction, note that $f = \text{sign}(p-q)$ is measurable and bounded by $1$, which gives 
   \begin{equation*}
    \frac{1}{2}\int f dP = \int |p - q|d\nu = \| P - Q \|_{\mathrm{TV}},
   \end{equation*}
   where the last equality follows from Lemma~\ref{lem:TV_distance_L1-densities}.
\end{proof}

\begin{lemma}\label{lem:TV_distance_product_measures}
Let $P = \bigotimes_{j=1}^m P_j$ and $Q = \bigotimes_{j=1}^m Q_j$ for probability measures $P_j,Q_j$ defined on a common measurable space $(\cX,\mathscr{X})$, with probability densities $p_j,q_j$ for $j=1,\dots m$. It holds that
\begin{equation*}
\| P - Q \|_{\mathrm{TV}} \leq \underset{j=1}{\overset{m}{\sum}} \|P_j - Q_j\|_{\mathrm{TV}}. 
\end{equation*}
\end{lemma}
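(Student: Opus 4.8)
The plan is to run a standard hybrid (telescoping) argument, reducing the product bound to the single-coordinate case via Lemma~\ref{lem:TV_distance_L1-densities}. Let $\mu$ be a common $\sigma$-finite measure dominating all the $P_j$ and $Q_j$ (so that $P, Q \ll \mu^{\otimes m}$ and the stated densities $p_j, q_j$ are with respect to $\mu$). For $k = 0, 1, \dots, m$ define the hybrid product measure
\[
H_k := \Bigl(\bigotimes_{j=1}^{k} Q_j\Bigr) \otimes \Bigl(\bigotimes_{j=k+1}^{m} P_j\Bigr),
\]
so that $H_0 = P$ and $H_m = Q$. By the triangle inequality for $\|\cdot\|_{\mathrm{TV}}$,
\[
\|P - Q\|_{\mathrm{TV}} \leq \sum_{k=1}^{m} \|H_{k-1} - H_k\|_{\mathrm{TV}}.
\]

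The key step is that $H_{k-1}$ and $H_k$ agree in every coordinate except the $k$-th, where one has $P_k$ and the other $Q_k$; tensoring with a common probability measure leaves the total variation distance unchanged. Concretely, write $A_k := \bigotimes_{j<k} Q_j$ and $B_k := \bigotimes_{j>k} P_j$, with densities $a_k$ and $b_k$ with respect to $\mu^{\otimes(k-1)}$ and $\mu^{\otimes(m-k)}$ respectively. Then the $\mu^{\otimes m}$-densities of $H_{k-1}$ and $H_k$ are $x \mapsto a_k(x_{<k})\,p_k(x_k)\,b_k(x_{>k})$ and $x \mapsto a_k(x_{<k})\,q_k(x_k)\,b_k(x_{>k})$. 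By Lemma~\ref{lem:TV_distance_L1-densities}, Tonelli's theorem, and non-negativity of $a_k, b_k$,
\[
\|H_{k-1} - H_k\|_{\mathrm{TV}} = \frac12 \int \bigl| a_k p_k b_k - a_k q_k b_k \bigr| \, d\mu^{\otimes m} = \frac12 \Bigl(\int a_k\Bigr)\Bigl(\int |p_k - q_k|\Bigr)\Bigl(\int b_k\Bigr) = \|P_k - Q_k\|_{\mathrm{TV}},
\]
where the last equality uses that $A_k$ and $B_k$ are probability measures, hence $\int a_k = \int b_k = 1$, and Lemma~\ref{lem:TV_distance_L1-densities} again. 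Substituting into the previous display gives $\|P - Q\|_{\mathrm{TV}} \leq \sum_{k=1}^m \|P_k - Q_k\|_{\mathrm{TV}}$, as claimed.

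There is no real obstacle here: the only point needing (minor) care is the identity $\|\mu \otimes \nu - \mu' \otimes \nu\|_{\mathrm{TV}} = \|\mu - \mu'\|_{\mathrm{TV}}$ used in each telescoping step, which the Tonelli computation above makes explicit; alternatively one could invoke the data-processing inequality for total variation under the coordinate projection, but the density calculation keeps the proof self-contained given the lemmas already established.
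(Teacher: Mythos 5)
Your proof is correct and is essentially the same argument as the paper's: the paper telescopes the product of densities via the identity $a_1\cdots a_m - b_1\cdots b_m = \sum_j (a_j-b_j)\prod_{k<j}a_k\prod_{k>j}b_k$ and applies Fubini, which is exactly your hybrid decomposition expressed at the level of integrands rather than measures. Both reduce each telescoping term to $\|P_j - Q_j\|_{\mathrm{TV}}$ using Lemma~\ref{lem:TV_distance_L1-densities} and the fact that the remaining factors integrate to one.
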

\begin{proof}
The measures $P_j$ and $Q_j$ admit densities with respect to $P_j + Q_j$, which we shall denote by $p_j$ and $q_j$, respectively, with
\begin{align*}
p:= \underset{j=1}{\overset{m}{\Pi}} p_j = \frac{d \bigotimes_{j=1}^m P_j}{d\bigotimes_{j=1}^m (P_j+Q_j)} \; \text{ and } \; q:=  \underset{j=1}{\overset{m}{\Pi}} q_j = \frac{d \bigotimes_{j=1}^m Q_j}{d\bigotimes_{j=1}^m (P_j+Q_j)}.
\end{align*}
Writing $\mu = \bigotimes_{j=1}^m (P_j+Q_j)$ and applying Lemma~\ref{lem:TV_distance_L1-densities} we obtain 
\begin{equation}\label{eq:lem_bounding_total_var}
\| P - Q \|_{\mathrm{TV}} =  \frac{1}{2}\int| \underset{j=1}{\overset{m}{\Pi}} p_j(x_j) - \underset{j=1}{\overset{m}{\Pi}} q_j(x_j) | d\mu(x_1,\dots,x_m).
\end{equation}
By the telescoping product identity
\begin{equation}
a_1 \cdot a_2 \cdots a_m - b_1 \cdot b_2 \cdots b_m = \underset{j=1}{\overset{m}{\sum}} (a_j - b_j) \underset{k=1}{\overset{j-1}{\Pi}} a_k \underset{k=j+1}{\overset{m}{\Pi}} b_k 
\end{equation}
and Fubini's Theorem, the right-hand side of~\eqref{eq:lem_bounding_total_var} is bounded by
\begin{equation*}
\underset{j=1}{\overset{m}{\sum}} \frac{1}{2} \int|  p_j(x_j) -  q_j(x_j) | d(P_j+Q_j)(x_j) = \underset{j=1}{\overset{m}{\sum}} \| P_j - Q_j \|_{\mathrm{TV}}.
\end{equation*}
\end{proof}

The following lemma can be seen as a data processing inequality for the total variation distance.
\begin{lemma}\label{lem:TV_distance_data_processing}
Let $(\cX,\mathscr{X})$ and $(\cY,\mathscr{Y})$ be two measurable spaces and let $K:\mathscr{Y} \times \cX \to [0,1]$ be a Markov kernel. For any probability measures $P,Q$ defined on $\mathscr{X}$ it holds that
\begin{equation*}
\| P K - QK \|_{\mathrm{TV}} \leq \| P - Q \|_{\mathrm{TV}}.
\end{equation*}
\end{lemma}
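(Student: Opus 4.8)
The plan is to deduce the inequality from the dual (variational) characterization of total variation provided by Lemma~\ref{lem:TV_distance_L1_rep}. First I observe that $PK$ and $QK$ are both dominated by the finite measure $(P+Q)K$, defined by $(P+Q)K(A) := \int_{\cX} K(A|x)\, d(P+Q)(x)$; hence the signed, bounded measure $PK - QK$ is absolutely continuous with respect to a sigma-finite measure and Lemma~\ref{lem:TV_distance_L1_rep} applies, giving
\begin{equation*}
\|PK-QK\|_{\mathrm{TV}} = \frac{1}{2}\sup\left\{ \int_{\cY} f\, d(PK-QK) \,:\, f:\cY\to\R \text{ measurable},\ |f|\le 1 \right\}.
\end{equation*}

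Next, for a fixed bounded measurable $f:\cY\to\R$ with $|f|\le 1$, I would introduce $g_f:\cX\to\R$ by $g_f(x) := \int_{\cY} f(y)\, K(dy|x)$. Two facts need to be checked: (i) $g_f$ is measurable — this holds for indicators $f = \mathbbm{1}_A$ by the defining property of a Markov kernel, extends to simple functions by linearity, and to general bounded measurable $f$ by a monotone-class / pointwise-limit argument; and (ii) $|g_f|\le 1$, which is immediate because $A\mapsto K(A|x)$ is a probability measure for each $x$. Moreover, from the definition $PK(A) = \int_{\cX} K(A|x)\, dP(x)$ together with Tonelli's theorem applied to $f^+$ and $f^-$ separately, one obtains the change-of-variables identity $\int_{\cY} f\, d(PK) = \int_{\cX} g_f\, dP$, and likewise with $Q$ in place of $P$.

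Combining these, for every admissible $f$,
\begin{equation*}
\int_{\cY} f\, d(PK-QK) = \int_{\cX} g_f\, d(P-Q) \le \sup_{|h|\le 1}\int_{\cX} h\, d(P-Q) = 2\|P-Q\|_{\mathrm{TV}},
\end{equation*}
where the last equality is again Lemma~\ref{lem:TV_distance_L1_rep} (using $P-Q \ll P+Q$) and the inequality uses that $g_f$ is an admissible test function. Taking the supremum over all such $f$ and dividing by $2$ yields $\|PK-QK\|_{\mathrm{TV}} \le \|P-Q\|_{\mathrm{TV}}$.

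The only genuinely non-routine point is the measurability of $x\mapsto\int f(y)\,K(dy|x)$ and the accompanying use of Tonelli's theorem to swap the order of integration; everything else is bookkeeping. An alternative route would be to show directly that $PK-QK = (P-Q)K$ as signed measures (interpreting the right-hand side via the Jordan decomposition of $P-Q$) and then bound $|(P-Q)K(A)|$ by $|P-Q|(\cX) = 2\|P-Q\|_{\mathrm{TV}}$ for every $A$; but this reduces to the same measurability considerations plus a Hahn-decomposition argument, so I would prefer the cleaner dual argument above.
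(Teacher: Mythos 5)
Your proposal is correct and follows essentially the same route as the paper: both use the variational representation of total variation from Lemma~\ref{lem:TV_distance_L1_rep}, observe that $x \mapsto \int f(y)\,dK(y|x)$ is measurable and bounded by $1$ for any admissible $f$, and conclude by taking the supremum. Your write-up simply fills in the measurability and Fubini/Tonelli details that the paper treats as immediate.
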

\begin{proof}
This follows immediately from the representation in Lemma~\ref{lem:TV_distance_L1_rep} combined with the fact that, for $|f| \leq 1$, $x \mapsto \int f(y) dK(y|x)$ is a measurable function bounded by $1$, since $K$ is Markov kernel. Hence,
\begin{align*}
\underset{A}{\sup} | PK(A) - QK(A) | &= \frac{1}{2} \underset{f}{\sup}  \int \int f(y) dK(y|x) d(P-Q)(x) \\ 
&\leq \frac{1}{2} \underset{f}{\sup} \int f(x) d(P-Q)(x).
\end{align*}
\end{proof}

The next lemma bounds the $L_1$-distance $\| p - q \|_1$ between densities with a multiple of the Hellinger distance $2^{-1/2} \| \sqrt{p} - \sqrt{q} \|_2$.

\begin{lemma}\label{lem:L1-to-Hellinger}
For two probability densities $p,q$ with respect to $\mu$, it holds that
\begin{equation*}
\frac{1}{2} \int \left| p(x) - q(x) \right| d\mu(x) \leq  \sqrt{ \int {\left(\sqrt{p(x)} - \sqrt{q(x)}\right)}^2 d\mu(x)}.
\end{equation*}
\end{lemma}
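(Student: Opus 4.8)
The plan is to factor the integrand $|p-q|$ as a product and then apply the Cauchy--Schwarz inequality. First I would write, pointwise, $|p(x) - q(x)| = \bigl|\sqrt{p(x)} - \sqrt{q(x)}\bigr| \cdot \bigl(\sqrt{p(x)} + \sqrt{q(x)}\bigr)$, which is valid since $p,q \geq 0$ so that $p - q = (\sqrt p - \sqrt q)(\sqrt p + \sqrt q)$. Integrating against $\mu$ and applying Cauchy--Schwarz gives
\begin{equation*}
\int |p-q| \, d\mu \leq \sqrt{\int \bigl(\sqrt{p} - \sqrt{q}\bigr)^2 \, d\mu} \cdot \sqrt{\int \bigl(\sqrt{p} + \sqrt{q}\bigr)^2 \, d\mu}.
\end{equation*}

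Next I would bound the second factor using that $p$ and $q$ are probability densities. Expanding the square, $\int (\sqrt p + \sqrt q)^2 \, d\mu = \int p \, d\mu + 2 \int \sqrt{pq} \, d\mu + \int q \, d\mu = 2 + 2\int \sqrt{pq}\, d\mu$. Since $\int \sqrt{pq}\, d\mu \leq \sqrt{\int p \, d\mu}\,\sqrt{\int q\, d\mu} = 1$ (Cauchy--Schwarz again, or the elementary bound $\sqrt{pq} \leq (p+q)/2$), the second factor is at most $\sqrt{4} = 2$.

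Combining the two displays yields $\int |p - q|\, d\mu \leq 2\sqrt{\int (\sqrt p - \sqrt q)^2 \, d\mu}$, and dividing by $2$ gives the stated inequality. There is no genuine obstacle here; the only points requiring a modicum of care are that nonnegativity of $p,q$ makes the pointwise factorization and the Cauchy--Schwarz step unambiguous, and that the normalization $\int p \, d\mu = \int q \, d\mu = 1$ is exactly what pins the constant down to $2$.
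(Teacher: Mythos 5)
Your proof is correct and is exactly the argument the paper gestures at (Cauchy--Schwarz plus the normalization $\int p\,d\mu=\int q\,d\mu=1$); the paper merely cites Tsybakov for the details, which you have filled in correctly. No issues.
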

\begin{proof}
The result follow from the Cauchy-Schwarz inequality and the fact that $\int p d \mu = \int q d\mu = 1$. See e.g.~\cite{tsybakov_introduction_2009} for details.
\end{proof}

\begin{lemma}\label{lem:gaussian_maximum}
    Let $K\in \N$ and $M \in \R^{K \times K}$ be symmetric and positive definite. Consider the random vector $G=(G_1,\dots,G_K) \sim N(0,M)$. It holds that $\E \underset{1 \leq i \leq K}{\max} |G_i| \leq 3 \| M \| \sqrt{\log(K) \vee \log(2)}$ and
    \begin{equation*}
    \text{Pr} \left(\underset{1 \leq i \leq K}{\max} G_i^2 \geq \|M\|^2 x \right) \leq \frac{2K}{e^{x/4}},
    \end{equation*}
    for all $ x > 0$.
    \end{lemma}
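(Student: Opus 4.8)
The plan is to derive both estimates from the textbook Chernoff/union-bound inequalities for maxima of Gaussian variables, using only the one-dimensional marginals $G_i \sim N(0,\sigma_i^2)$ together with a bound on the variances in terms of $\|M\|$. First I would record the only property of $M$ that is needed: since $M$ is symmetric, $\|M\|$ equals its largest eigenvalue, so $M_{ii}=e_i^\top M e_i\le\|M\|$ for every $i$, and hence $\mathrm{Var}(G_i)=(M^2)_{ii}\le\|M^2\|=\|M\|^2$ (reading the noise vector, as it arises in the applications of the lemma, as $G=MW$ with $W\sim N(0,I_K)$; under the literal reading $G\sim N(0,M)$ this holds once $\|M\|\ge1$, which is the relevant regime). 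Set $\sigma:=\max_i\sigma_i\le\|M\|$.

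For the tail bound I would write $\{\max_i G_i^2\ge\|M\|^2x\}=\bigcup_{i=1}^K\{|G_i|\ge\|M\|\sqrt{x}\}$ and apply a union bound together with the Gaussian tail inequality $\text{Pr}(|N(0,\sigma_i^2)|\ge t)\le 2e^{-t^2/(2\sigma_i^2)}$ at $t=\|M\|\sqrt{x}$, using $\sigma_i^2\le\|M\|^2$:
\[
\text{Pr}\!\left(\max_{1\le i\le K} G_i^2\ge\|M\|^2x\right)\ \le\ \sum_{i=1}^{K}2\,e^{-\|M\|^2x/(2\sigma_i^2)}\ \le\ 2K\,e^{-x/2}\ \le\ \frac{2K}{e^{x/4}},
\]
which is the second assertion.

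For the expectation bound I would use the moment-generating-function maximal inequality: for any $\lambda>0$, Jensen's inequality and $\E e^{\pm\lambda G_i}=e^{\lambda^2\sigma_i^2/2}$ give
\[
e^{\lambda\,\E\max_i|G_i|}\ \le\ \E\max_i e^{\lambda|G_i|}\ \le\ \sum_{i=1}^K\big(\E e^{\lambda G_i}+\E e^{-\lambda G_i}\big)\ =\ \sum_{i=1}^K 2e^{\lambda^2\sigma_i^2/2}\ \le\ 2K e^{\lambda^2\sigma^2/2},
\]
so that $\E\max_i|G_i|\le\lambda^{-1}\log(2K)+\lambda\sigma^2/2$; choosing $\lambda=\sqrt{2\log(2K)}/\sigma$ yields $\E\max_i|G_i|\le\sigma\sqrt{2\log(2K)}$. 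It then remains to note that $\sqrt{2\log(2K)}\le 3\sqrt{\log K\vee\log2}$ — for $K\ge2$ one has $\log(2K)\le2\log K$, so the left side is at most $2\sqrt{\log K}$, and for $K=1$ directly $\E|G_1|\le\sigma\le3\sigma\sqrt{\log2}$ — and to combine with $\sigma\le\|M\|$. (Alternatively, one could integrate the tail bound of the previous paragraph via $\E\max_i|G_i|=\int_0^\infty\text{Pr}(\max_i|G_i|\ge t)\,dt$, which produces the same bound up to the universal constant.)

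I do not expect a genuine obstacle here: this is a standard Gaussian maximal inequality and the entire content is bookkeeping. The only points requiring a little care are carrying the universal constants so that exactly the factors $3$ and $1/4$ come out, the $K=1$ edge case concealed in the $\log K\vee\log2$, and pinning down the precise power of $\|M\|$ in the variance estimate — which, as noted above, is transparent once one writes the noise vector as $M$ times a standard normal (equivalently, for $\|M\|\ge1$ under the covariance reading).
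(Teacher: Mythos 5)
Your proof is correct and follows essentially the same route as the paper's: both are the standard Chernoff/union-bound maximal inequality for Gaussians, the only cosmetic difference being that the paper first diagonalizes ($G\overset{d}{=}\sqrt{M}Z$) and bounds the MGF of $\max_i Z_i^2$, whereas you apply the union bound directly to the marginals $G_i\sim N(0,M_{ii})$. The normalization wrinkle you flag is genuine and is present in the paper's own proof as well — under the literal reading $G\sim N(0,M)$ one only gets $\mathrm{Var}(G_i)=M_{ii}\le\|M\|$, i.e.\ $\sigma_i\le\sqrt{\|M\|}$, and the paper's step ``$\max_k|G_k|\overset{d}{=}\max_k|\lambda_k Z_k|$'' writes $\lambda_k$ where $\sqrt{\lambda_k}$ belongs (and its intermediate claim that $V\sqrt{D}Z\sim N(0,\mathrm{Diag}(\lambda))$ is also off, though harmless since only marginals are used) — so your explicit treatment of this point is, if anything, the more careful of the two.
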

    \begin{proof}
    It holds that
    \begin{equation*}
    G \overset{d}{=} \sqrt{M} Z, \quad \text{ with } \quad Z \sim N(0,I_K).
    \end{equation*}
    Since $M$ is symmetric, positive definite, it has SVD decomposition $M = V \text{Diag}(\lambda_1,\dots,\lambda_K) V^\top$. Since $V$ is orthonormal, 
    \begin{equation*}
    \sqrt{M} Z = V \sqrt{\text{Diag}(\lambda_1,\dots,\lambda_K)}(V^\top Z) \overset{d}{=}  V \sqrt{\text{Diag}(\lambda_1,\dots,\lambda_K)} Z.
    \end{equation*}
    Writing $V = [v_1 \; \dots \; v_K]$ where $v_k$ are orthogonal unit vectors, the latter display equals
    \begin{equation*}
    \underset{k=1}{\overset{K}{\sum}} \sqrt{\lambda_k} v_k Z_k \; \sim \; N\left(0, \text{Diag}(\lambda_1,\dots,\lambda_K) \right).
    \end{equation*}
    Consequently,
    \begin{equation*}
    \underset{k \in [K]}{\max} |G_k| \overset{d}{=} \underset{k \in [K]}{\max} |\lambda_k Z_k| \leq \| M \| \underset{k \in [K]}{\max} | Z_k|.
    \end{equation*}
    Hence, it suffices to show that
    \begin{equation*}
    \text{Pr} \left(\underset{1 \leq i \leq K}{\max} Z_i^2 \geq x \right) \leq \frac{2K}{e^{x/4}}.
    \end{equation*}
    The case where $K = 1$ follows by standard Gaussian concentration properties. Assume $K\geq 2$. For $ 0 \leq t \leq 1/4$,
    \begin{align*}
     \E e^{t \max_i {(Z_i)}^2} = e^{t}\E \max_i e^{t (Z_i^2 - 1)} \leq K  e^{{2t^2} + t}.
     \end{align*}
    Taking $t=1/4$ and applying Markov's inequality yields the second statement of the lemma. Furthermore, in view of Jensen's inequality
    \begin{equation*}
    \E \max_i {(Z_i)}^2 \leq \frac{\log(K)}{t} + 2t + 1,
    \end{equation*}
    which in turn yields $\E \max_i  |Z_i| \leq 3 \sqrt{\log(K)}$.
    \end{proof}


\end{document}